\pgfplotsset{compat=newest,compat/show suggested version=false}
\pgfplotsset{soldot/.style={color=black,only marks,mark=*}} \pgfplotsset{holdot/.style={color=black,fill=white,only marks,mark=*}}
\newtheorem{thm}{Theorem}[section]
\newtheorem{mthm}[thm]{Main Theorem}
\newtheorem{lem}[thm]{Lemma}
\newtheorem{cor}[thm]{Corollary}
\theoremstyle{definition}
\theoremstyle{remark}
\newtheorem{remark}[thm]{Remark}
\newtheorem{remarks}[thm]{Remarks}
\newtheorem{examples}[thm]{Examples}
\newtheorem{prob}[thm]{Problem}
\numberwithin{equation}{section}
 \newcommand{\R}{{\mathbb R}}
 \newcommand{\C}{{\mathbb C}}
\newcommand{\sph}{{\mathbb S}} 
 \newcommand{\PP}{{\mathbb P}}
 \newcommand{\Cont}{{\mathcal C}}
\newcommand{\Ss}{{\EuScript S}}
\newcommand{\Tt}{{\EuScript T}}
\newcommand{\Bb}{{\EuScript B}}
\newcommand{\Cc}{{\EuScript C}}
\newcommand{\pol}{{\EuScript K}}
\newcommand{\Dd}{{\EuScript D}}
\newcommand{\Uu}{{\EuScript U}}
\newcommand{\Reg}{\operatorname{Reg}}
\newcommand{\Sing}{\operatorname{Sing}}
\newcommand{\Int}{\operatorname{Int}}
\newcommand{\cl}{\operatorname{Cl}}
\newcommand{\dist}{\operatorname{dist}}
\newcommand{\id}{\operatorname{id}}
\newcommand{\x}{{\tt x}} \newcommand{\y}{{\tt y}} 
\newcommand{\z}{{\tt z}} \renewcommand{\t}{{\tt t}}
\newcommand{\s}{{\tt s}}
\newcommand{\veps}{\varepsilon}
\newcommand{\ol}{\overline}
\begin{document}

\title[On a Nash curve selection lemma through finitely many points]{On a Nash curve selection lemma\\ through finitely many points}

\author{Jos\'e F. Fernando}
\address{Departamento de \'Algebra, Geometr\'\i a y Topolog\'\i a, Facultad de Ciencias Matem\'aticas, Universidad Complutense de Madrid, Plaza de Ciencias 3, 28040 MADRID (SPAIN)}
\email{josefer@mat.ucm.es}
\thanks{Author is supported by Spanish STRANO PID2021-122752NB-I00 and Grupos UCM 910444.}

\date{31/03/2025}
\subjclass[2020]{Primary: 14P10, 14P20; Secondary: 41A10, 41A17, 41A25, 52B99.}
\keywords{Semialgebraic sets connected by analytic paths, Stone-Weierstrass' approximation, Bernstein's polynomials, Nash paths, polynomial paths, degree of a polynomial path.}

\begin{abstract}
A celebrated theorem in Real Algebraic and Analytic Geometry (originally due to Bruhat-Cartan and Wallace and stated later in its current form by Milnor) is the (Nash) curve selection lemma, which has wide applications also in Complex Algebraic and Analytic Geometry. It states that each point in the closure of a semialgebraic set $\Ss\subset\R^n$ can be reached by a Nash arc of $\R^n$ such that at least one of its branches is contained in $\Ss$. 

The purpose of this work is to generalize the previous result to finitely many points. More precisely, let $\Ss\subset\R^n$ be a semialgebraic set, let $x_1,\ldots,x_r\in\Ss$ be $r$ points (that we call `control points') and $0=:t_1<\ldots<t_r:=1$ be $r$ values (that we call `control times'). A natural `logistic' question concerns the existence of a smooth and semialgebraic (Nash) path $\alpha:[0,1]\to\Ss$ that passes through the control points at the control times, that is, $\alpha(t_k)=x_k$ for $k=1,\ldots,r$. The necessary and sufficient condition to guarantee the existence of $\alpha$ when the number of control points is large enough and they are in general position is that $\Ss$ is connected by analytic paths. The existence of generic real algebraic sets that do not contain rational curves confirms that the analogous result involving polynomial paths (instead of Nash paths) is only possible under additional restrictions. A sufficient condition is that $\Ss\subset\R^n$ has in addition dimension $n$. 

A related problem concerns the approximation by a Nash path of an existing continuous semialgebraic path $\beta:[0,1]\to\Ss$ with control points $x_1,\ldots,x_r\in\Ss$ and control times $0=:t_1<\ldots<t_r:=1$. As one can expect, apart from the restrictions on $\Ss$, some restrictions on $\beta$ are needed. A sufficient condition is that the (finite) set of values $\eta(\beta)$ at which $\beta$ is not smooth is contained in the set of regular points of $\Ss$ and $\eta(\beta)$ does not meet the set of control times. 

If $\Ss\subset\R^n$ is a finite union (connected by analytic paths) of $n$-dimensional convex polyhedra, we can even `estimate' (using Bernstein's polynomials) the degree of the involved polynomial path. This requires: (1) a polynomial double curve selection lemma for convex polyhedra involving only degree $3$ cuspidal curves; (2) to find the simplest polynomial paths that connect two convex polyhedra (whose union is connected by analytic paths), and (3) some improvements concerning well-known bounds for Bernstein's polynomials (and their high order derivatives) to approximate continuous functions that are not differentiable on their whole domain.
\end{abstract}

\maketitle

\section{Introduction}\label{s1}

A natural `logistic' problem in Real Geometry, whose affirmative solution would generalize the curve selection lemma \cite[p.989]{bc}, \cite[\S3]{m}, \cite[Lem.18.3]{w}, is the following (see also \cite[\S4.C]{fgu}). Let $X$ be a connected topological space of certain type, let $x_1,\ldots,x_r\in X$ be finitely many points (control points) and $0=:t_1<\cdots<t_r:=1$ be finitely many values (control times).

\begin{prob}[Curve selection lemma through finitely many points]\label{1}
{\em Is there a (continuous) path $\alpha:[0,1]\to X$ of `certain prefixed type' such that $\alpha(t_i)=x_i$ for $i=1,\ldots,r$?} 
\end{prob}

Suppose we already have a continuous path $\beta:[0,1]\to X$ such that $\beta(t_i)=x_i$ for $i=1,\ldots,r$, that $X$ is a metric space and fix $\veps>0$.

\begin{prob}[Approximation of curves through finitely many points]\label{2}
{\em Is there a (continuous) path $\alpha:[0,1]\to X$ of `certain prefixed type' such that $\alpha(t_i)=x_i$ for $i=1,\ldots,r$ and $\dist(\alpha(t),\beta(t))<\veps$ for each $t\in[0,1]$?}
\end{prob}

\subsection{Semialgebraic setting}\label{sst}
A subset $\Ss\subset\R^n$ is \em semialgebraic \em when it admits a description by a finite boolean combination of polynomial equalities and inequalities. The category of semialgebraic sets is closed under basic boolean operations, but also under usual topological operations: taking closures (denoted by $\cl(\cdot)$), interiors (denoted by $\Int(\cdot)$), connected components, etc. If $\Ss\subset\R^m$ and $\Tt\subset\R^n$ are semialgebraic sets, a map $f:\Ss\to\Tt$ is \em semialgebraic \em if its graph is a semialgebraic set. 

In the following smooth means $\Cont^\infty$. A map $f:U\to\R^m$ on an open semialgebraic set $U\subset\R^n$ is \em Nash \em if it is smooth and semialgebraic. Recall that Nash maps are analytic maps \cite[Prop.8.1.8]{bcr}. If $\Ss\subset\R^n$ is a semialgebraic set, a map $f:\Ss\to\R^m$ is {\em Nash} if there exist an open semialgebraic neighborhood $U\subset\R^n$ of $\Ss$ and a Nash extension $F:U\to\R^m$ of $f$ to $U$. Analogously, a Nash manifold is a semialgebraic subset $\Ss\subset\R^n$ that is a smooth submanifold of $\R^n$. As an application of \cite[Prop.8.1.8]{bcr} one deduces that Nash manifolds are analytic manifolds. Recall that open semialgebraic subsets of $\R^n$ admit by the Finiteness Theorem \cite[Th.2.7.2]{bcr} a description as a finite union of {\em basic open semialgebraic sets}, that is, semialgebraic sets of the type $\{f_i>0,\ldots,f_r>0\}$ where $f_i\in\R[\x]:=\R[\x_1,\ldots,\x_n]$. Along the article we will use typewriter symbols $\x,\y,\z,\t$ to denote variables or tuples of variables, whereas we use the symbols $x,y,z,t$ to denote values or points that we substitute in variables or tuples of variables $\x,\y,\z,\t$.

\subsection{State of the art for semialgebraic sets and Nash paths}
In this work we study Problems \ref{1} and \ref{2} when $X=\Ss\subset\R^n$ is a semialgebraic set and $\alpha:[0,1]\to\Ss$ is a Nash path. We prove results that involve a tight control of the behavior of the obtained Nash/polynomial path (Theorem \ref{smart} (polynomial case) and Main Theorems \ref{nashsmart} (Nash case) and \ref{plcase} (PL case)). Using these results, we deduce that a sufficient condition to solve Problems \ref{1} and \ref{2} is that $\Ss$ is connected by analytic paths. In fact, if the number of points $x_i$ is large enough and they are in general position, the connexion by analytic paths is a necessary condition. A `theoretical' (but not constructive) solution to Problem \ref{1} follows straightforwardly from \cite[Main Thm.1.4]{f1}, where we characterize the semialgebraic subsets of $\R^n$ of dimension $d$ that are images of $\R^d$ under a Nash map. Namely,

\begin{thm}[Nash images of affine spaces, {\cite[Main Thm.1.4]{f1}}]\label{ni}
Let $\Ss\subset\R^n$ be a semialgebraic set of dimension $d$. The following conditions are equivalent:
\begin{itemize}
\item[(i)] There exists a Nash map $f:\R^d\to\R^n$ such that $f(\R^d)=\Ss$.
\item[(ii)] $\Ss$ is connected by analytic paths.
\end{itemize}
\end{thm}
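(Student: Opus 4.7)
\emph{Plan for (i)$\Rightarrow$(ii).} This implication is immediate. If $f:\R^d\to\R^n$ is Nash with $f(\R^d)=\Ss$, then $f$ is analytic by \cite[Prop.8.1.8]{bcr}, so for any two points $f(a),f(b)\in\Ss$ the composition $t\mapsto f(a+t(b-a))$ is an analytic arc $[0,1]\to\Ss$ joining them, proving that $\Ss$ is connected by analytic paths.

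\emph{Plan for (ii)$\Rightarrow$(i).} I would follow a stratification-and-gluing strategy in two steps. First, an \emph{elementary building block}: for each Nash submanifold $M\subseteq\Ss$ of dimension $\leq d$, construct a Nash map $g:\R^d\to\R^n$ with $M\subseteq g(\R^d)\subseteq\Ss$. In the generic situation where $M$ is a connected $d$-dimensional Nash manifold, one combines a Nash triangulation of $\cl(M)\cap\Ss$ with explicit simplex-filling Nash maps on $\R^d$ (and, when needed, a stereographic-type compactification to catch limit points from the boundary) to cover $M$ by a single Nash image. Second, a \emph{gluing principle}: if $A,B\subseteq\Ss$ are Nash images of $\R^d$ and some analytic arc in $\Ss$ joins a point of $A$ to a point of $B$, then $A\cup B$ is again a Nash image of $\R^d$. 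Granting these, one fixes a finite Nash stratification $\Ss=\bigsqcup_{j=1}^m M_j$, produces a Nash surjection onto a set $g_j(\R^d)\supseteq M_j$ for each $j$ via the first step, and iterates the gluing principle, using the hypothesis that $\Ss$ is connected by analytic paths to supply the necessary connecting arcs between the successive pieces.

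\emph{Main obstacle.} The delicate step is the gluing principle. Partitions of unity are unavailable in the Nash category, so one cannot softly interpolate between $g_A$ and $g_B$; the construction must be essentially algebraic. My approach would be to split $\R^d$ along a Nash hyperplane into two closed Nash half-spaces, arrange Nash reparametrizations so that $g_A$ maps one half onto $A$ and $g_B$ maps the other onto $B$, and thread the connecting analytic arc across the common boundary so that the resulting map extends to a globally Nash surjection $\R^d\to A\cup B$. Verifying that this extension is possible, and that iterating the construction over the finitely many strata of $\Ss$ produces a single Nash map without creating new singularities at transitions, is the technical core of the argument; this rigidity, rather than any topological subtlety, is what makes the equivalence deep.
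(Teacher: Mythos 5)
First, note that the paper does not prove this statement: Theorem \ref{ni} is quoted verbatim from \cite[Main Thm.1.4]{f1} and used as a black box (e.g.\ to derive Corollary \ref{main}), so there is no internal proof to compare yours against. Assessing your plan on its own terms: the direction (i)$\Rightarrow$(ii) is correct and is the standard easy half. The gap lies in (ii)$\Rightarrow$(i), at exactly the point you flag as the technical core. The gluing device you propose --- split $\R^d$ by a hyperplane $H$ into closed half-spaces, let $g_A$ map one half onto $A$ and $g_B$ map the other onto $B$, and match them along $H$ --- cannot produce a Nash map. A Nash map on $\R^d$ is analytic \cite[Prop.8.1.8]{bcr}, so by the identity principle its restriction to one closed half-space determines it on all of $\R^d$: the two halves must be analytic continuations of one another, and hence $g_A$ and $g_B$ cannot be built independently (one adapted to $A$, one to $B$) and then threaded together across $H$. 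No ``Nash reparametrization'' repairs this; it is precisely the rigidity that removes partitions of unity from the Nash category. Moreover, even taken as a black box, your gluing principle (two Nash images of $\R^d$ joined by an analytic arc inside $\Ss$ have union a Nash image of $\R^d$) is essentially equivalent in strength to the implication being proved --- in \cite{f1} it is a \emph{consequence} of the main theorem, not a stepping stone towards it --- so the plan is circular in difficulty.

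The route actually taken in \cite{f1} (and echoed in this paper) avoids gluing along hypersurfaces altogether: connectedness by analytic paths is recharacterized via ``well-weldedness'', resolution of singularities and Nash tubular neighborhoods reduce to the nonsingular setting, and the union of strata is covered by a \emph{single} analytic formula --- a Nash path through finitely many control points (the content of Main Theorem \ref{nashsmart} here) swept out by a parametrized family as in the maps $\varphi_m$ of Remark \ref{plcasec}, together with folding maps such as $t\mapsto t^2$ that realize half-spaces and simplices as Nash images of $\R^d$ with no matching along a boundary. Your ``elementary building block'' for boundary and lower-dimensional strata also conceals a real difficulty (reaching points of $\cl(M)\cap\Ss\setminus M$ so that the enlarged set is still a single Nash image), but the half-space gluing is the step that fails outright.
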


\subsubsection{Nash curve selection lemma through finitely many points.}
The announced `theoretical' (but not constructive) consequence of Theorem \ref{ni} is the following.

\begin{cor}[Nash curve selection lemma through finitely many points]\label{main}
Let $\Ss\subset\R^n$ be a semialgebraic set connected by analytic paths. Fix control points $x_1,\ldots,x_r\in\Ss$ and control values $0=:t_1<\cdots<t_r:=1$. Then there exists a Nash path $\alpha:[0,1]\to\Ss$ such that $\alpha(t_i)=x_i$ for $i=1,\ldots,r$.
\end{cor}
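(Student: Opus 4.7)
The plan is to reduce the problem to an interpolation question on an affine space by invoking Theorem \ref{ni}. Writing $d:=\dim(\Ss)$, the hypothesis that $\Ss$ is connected by analytic paths furnishes, by Theorem \ref{ni}, a Nash map $f:\R^d\to\R^n$ with $f(\R^d)=\Ss$. For each control point $x_i\in\Ss$, I will pick an arbitrary preimage $y_i\in f^{-1}(x_i)\subset\R^d$; this is possible because $f$ is surjective onto $\Ss$.

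Next, I will connect $y_1,\dots,y_r$ in $\R^d$ by a polynomial path respecting the control times. This is immediate: Lagrange interpolation (applied coordinate by coordinate) produces a polynomial map $\gamma:\R\to\R^d$ of degree at most $r-1$ with $\gamma(t_i)=y_i$ for $i=1,\dots,r$. One could alternatively use any other polynomial/Nash interpolation scheme in $\R^d$, since the target is an affine space and no geometric restriction has to be respected here.

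Finally, I will set $\alpha:=(f\circ\gamma)|_{[0,1]}:[0,1]\to\R^n$. As the composition of a polynomial map with a Nash map, $\alpha$ is Nash; moreover $\alpha([0,1])\subset f(\R^d)=\Ss$ and $\alpha(t_i)=f(\gamma(t_i))=f(y_i)=x_i$ for $i=1,\dots,r$, which is exactly what is required.

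The reason this is only a \emph{theoretical} (non-constructive) consequence, as emphasized in the paper, is that the whole difficulty has been absorbed into Theorem \ref{ni}: the surjection $f$ is produced abstractly and one has no explicit control over its degree, its fibers, or on how a chosen $\gamma$ behaves after being pushed forward to $\Ss$. Thus there is no genuine obstacle in the present argument, but neither is there any quantitative information on $\alpha$; obtaining such control (degree bounds, approximation of a prescribed $\beta$, behavior near singularities of $\Ss$) is precisely what requires the subsequent constructive results announced in the introduction.
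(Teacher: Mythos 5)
Your argument is correct and is essentially the same as the paper's own proof: invoke Theorem \ref{ni} to get a Nash surjection $f:\R^d\to\Ss$, lift the control points to preimages in $\R^d$, interpolate them by a Lagrange polynomial path respecting the control times, and push the result forward by $f$. No changes are needed.
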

\begin{proof}
Let $f:\R^d\to\R^n$ be a Nash map such that $f(\R^d)=\Ss$ and let $z_1,\ldots,z_r\in\R^d$ be such that $f(z_i)=x_i$ for $i=1,\ldots,r$. Using for instance Lagrange's interpolation, we find a polynomial path $\beta:[0,1]\to\R^d$ (of degree $\leq r-1$) such that $\beta(t_i)=z_i$ for $i=1,\ldots,r$. Thus, $\alpha:=f\circ\beta:[0,1]\to\Ss$ is a Nash path that satisfies the required conditions.
\end{proof}
\begin{remark}[Classical curve selection lemma]
In \cite[\S9]{f1} it is proved that each semialgebraic set $\Ss\subset\R^n$ is the union of its connected components by analytic paths, which are finitely many semialgebraic sets $\Ss_1,\ldots,\Ss_r$. If $x\in\cl(\Ss)$, we may assume $x\in\cl(\Ss_1)$. Thus, $\Ss_1\cup\{x\}$ is again connected by analytic paths \cite[Main Thm.1.4 \& Lem.7.4]{f1} and by Corollary \ref{main} there exists a Nash path $\alpha:[0,1]\to\Ss_1\cup\{x\}\subset\Ss\cup\{x\}$ such that $\alpha(0)=x$ and $\alpha((0,1])\subset\Ss_1\subset\Ss$. Thus, Corollary \ref{main} provides the classical curve selection lemma as a straightforward consequence.\hfill$\sqbullet$
\end{remark}

The main results of this article provide a different proof of Corollary \ref{main} (Problem \ref{1}) with a more constructive flavor, which is not based on the existential use of \cite[Main Thm.1.4]{f1}. We will simultaneously face the problem of approximating some existing continuous semialgebraic path passing through the control points at the control times (Problem \ref{2}). As the reader can expect, the previous continuous semialgebraic path shall satisfy some additional restrictions.

\subsubsection{Polynomial curve selection lemma through finitely many points}
Let $\alpha:=(\alpha_1,\ldots,\alpha_n):[a,b]\to\R^n$ be a continuous semialgebraic path. We claim: {\em There exists a minimal finite set $\eta(\alpha)\subset[a,b]$ such that $\alpha|_{[a,b]\setminus\eta(\alpha)}$ is a Nash map}. 
\begin{proof}
Consider the continuous semialgebraic map $\beta_k:[a,b]\to\R^2,\ t\mapsto(t,\alpha_k(t))$ for $k=1,\ldots,n$. By \cite[Prop.2.9.10]{bcr} there exist finitely many points $t_1,\ldots,t_r\in[a,b]$ such that $M_{ki}:=\beta_k((t_i,t_{i+1}))$ is a Nash submanifold of $\R^2$ for $i=1,\ldots,r-1$ and $k=1,\ldots,n$. For each $p\in M_{ki}$ denote the tangent line to $M_{ki}$ at $p$ with $T_pM_{ki}$. Consider the projection $\pi_1:\R^2\to\R$ onto the first coordinate and let $R_{ki}:=\{p\in M_{ki}:\ \dim(\pi_1(T_pM_{ki}))=0\}$. We claim: {\em The semialgebraic set $R_{ki}$ is finite for each $i=1,\ldots,r-1$ and each $k=1,\ldots,n$}. 

If $p\in R_{ki}$ (for some $i=1,\ldots,r-1$ and $k=1,\ldots,n$), then $\alpha_k$ is not differentiable at $\pi_1(p)$. By \cite[Ch.7.Thm.(3.2) (II$_m$), p.115]{dries} the non-differentiability locus of $\alpha_k$ is a semialgebraic set of dimension $\leq0$, that is, it is a finite set. Consequently, $R_{ki}$ is a finite set, as claimed.

We conclude that $\eta(\alpha)\subset\{t_1,\ldots,t_r\}\cup\bigcup_{k=1}^n\bigcup_{i=1}^rR_{ik}$ is a finite set, as required.
\end{proof}

By \cite[Prop.8.1.12]{bcr} and after reparameterizing (locally at $a$ and $b$ if necessary) we may assume that $\alpha$ is analytic at the points $a,b$ and consequently that $\eta(\alpha)\subset(a,b)$. Let $\Ss_1,\Ss_2\subset\R^n$ be two Nash manifolds. A \em (Nash) bridge between $\Ss_1$ and $\Ss_2$ \em is the image $\Gamma$ of a Nash arc $\alpha:[-1,1]\to\R^n$ such that $\alpha([-1,0))\subset\Ss_1$ and $\alpha((0,1])\subset\Ss_2$. The point $\alpha(0)$ is called the \em base point \em of $\Gamma$. In case $\Ss_1,\Ss_2\subset\R^n$ are open semialgebraic sets and there exists a Nash bridge $\alpha:[-1,1]\to\R^n$ between $\Ss_1$ and $\Ss_2$, we can modify $\alpha$ to have a polynomial arc $\alpha:[-1,1]\to\R^n$ such that $\alpha([-1,0))\subset\Ss_1$ and $\alpha((0,1])\subset\Ss_2$ (see \cite[Lem.4.1]{fu}).

In \cite{fu} we study the images of the closed unit ball under polynomial maps. As a main tool, we prove there the following result \cite[Lem.3.1]{fu}, which is stronger than only a solution to Problems \ref{1} and \ref{2}. The main difficulty focuses on guaranteeing that the approximating polynomial paths have their images inside the chosen semialgebraic set. These types of problems of keeping the same target space after approximation are analyzed carefully in \cite{fgh1,fgh2}.

\begin{thm}[{Smart polynomial curve, \cite[Lem.3.1]{fu}}]\label{smart}
Let $\Ss_1,\ldots,\Ss_r\subset\R^n$ be connected open semialgebraic sets (non-necessarily pairwise different) and denote $\Ss:=\bigcup_{i=1}^r\Ss_i$. Pick control points $p_i\in\cl(\Ss_i)$ and assume there exists a polynomial bridge $\Gamma_i$ between $\Ss_i$ and $\Ss_{i+1}$. Denote the base point of $\Gamma_i$ with $q_i\in\cl(\Ss_i)\cap\cl(\Ss_{i+1})$. Fix control times $s_0:=0<t_1<\cdots<t_r<1=:s_r$ and $s_i\in(t_i,t_{i+1})$ for $i=1,\ldots,r-1$. Then there exists a polynomial path $\alpha:\R\to\R^n$ that satisfies: 
\begin{itemize}
\item[(i)] $\alpha([0,1])\subset\Ss\cup\{p_1,\ldots,p_r,q_1,\ldots,q_{r-1}\}$.
\item[(ii)] $\alpha(t_i)=p_i$ for $i=1,\ldots,r$.
\item[(iii)] $\alpha((t_i,s_i))\subset\Ss_i$, $\alpha((s_i,t_{i+1}))\subset\Ss_{i+1}$ and $\alpha(s_i)=q_i$ for $i=1,\ldots,r-1$.
\end{itemize}
In addition, if $\veps>0$ and $\beta:[0,1]\to\R^n$ is a continuous semialgebraic path such that $\eta(\beta)\subset(0,1)\setminus\{t_1,\ldots,t_r,s_1,\ldots,s_{r-1}\}$, $\beta(\eta(\beta))\subset\Ss$ and $\beta$ satisfies conditions \em (i)\em, \em (ii) \em and \em (iii) \em above, we may assume that $\|\alpha-\beta\|<\veps$.
\end{thm}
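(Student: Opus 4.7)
The strategy is to first build a continuous piecewise polynomial model path $\beta_0:[0,1]\to\R^n$ satisfying (i), (ii) and (iii), and then to replace $\beta_0$ by a single polynomial $\alpha$ via Stone--Weierstrass approximation together with a Hermite correction that restores the interpolation data at every node $\tau\in\{t_1,\ldots,t_r,s_1,\ldots,s_{r-1}\}$ exactly. The approximation clause of the theorem follows by running the same scheme starting from $\beta_0:=\beta$, since the hypotheses $\eta(\beta)\cap\{t_1,\ldots,t_r,s_1,\ldots,s_{r-1}\}=\emptyset$ and $\beta(\eta(\beta))\subset\Ss$ ensure that $\beta$ already has the structure the model path is designed to have.

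For the model I would glue three types of polynomial pieces. A polynomial parametrization of each given bridge $\Gamma_i$ provides a local piece through $q_i$ whose two germs lie in $\Ss_i$ and $\Ss_{i+1}$. Two iterated applications of the polynomial curve selection lemma at $p_i\in\cl(\Ss_i)$ (combined with the modification from Nash to polynomial arc for open sets, in the spirit of \cite[Lem.4.1]{fu}) produce a polynomial arc through $p_i$ with both one-sided germs inside $\Ss_i$. Any two points of the open connected semialgebraic set $\Ss_i$ can be joined by a polynomial arc inside $\Ss_i$, obtained by linking them first by a continuous semialgebraic path and then Stone--Weierstrass-approximating, using the positive distance from the compact image to $\partial\Ss_i$. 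Reparametrizing and concatenating these pieces yields a continuous semialgebraic $\beta_0$ satisfying (i), (ii), (iii) with $\eta(\beta_0)\subset\{t_1,\ldots,t_r,s_1,\ldots,s_{r-1}\}$.

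To turn $\beta_0$ into a single polynomial, fix a small parameter $\delta>0$. By Stone--Weierstrass there is a polynomial path $\tilde\alpha:\R\to\R^n$ with $\|\tilde\alpha-\beta_0\|_{[0,1]}<\delta$. Denote by $L_\tau^{(0)}$ and $L_\tau^{(1)}$ the Hermite basis polynomials in $\t$ of degree $4r-3$ that prescribe value and first derivative at $\tau$ while vanishing to first order at every other node, and set
\[
\alpha(\t):=\tilde\alpha(\t)+\sum_\tau\Bigl(\bigl(\beta_0(\tau)-\tilde\alpha(\tau)\bigr)L_\tau^{(0)}(\t)+\bigl(\beta_0'(\tau)-\tilde\alpha'(\tau)\bigr)L_\tau^{(1)}(\t)\Bigr).
\]
Then $\alpha$ and $\beta_0$ share the same $1$-jet at every node, while $\|\alpha-\beta_0\|_{[0,1]}=O(\delta)$ because the scalar coefficients of the correction are $O(\delta)$ (with an implicit constant depending only on the fixed nodes).

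The main obstacle is that (i) must be preserved after this perturbation. On any compact subset of $[0,1]$ disjoint from small neighbourhoods $U_\tau$ of the nodes, $\beta_0$ maps into the open set $\Ss$ at positive distance from $\partial\Ss$, so choosing $\delta$ small enough forces $\alpha$ to remain in $\Ss$ on such a subset. Inside each $U_\tau$, the local piece of $\beta_0$ enters and leaves $\tau$ with non-degenerate one-sided tangent directions pointing into the prescribed open sets; since the Hermite correction preserves these tangents at $\tau$, an implicit function / transversality argument shows that any polynomial which is $C^0$-close to $\beta_0$ on $U_\tau$ and agrees with it to first order at $\tau$ lies in the correct open set on each half of $U_\tau\setminus\{\tau\}$. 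Making $\delta$ small enough then yields $\alpha$ satisfying (i), (ii) and (iii); applying the whole scheme to $\beta$ instead of the built model $\beta_0$ gives the approximation statement with $\|\alpha-\beta\|<\veps$.
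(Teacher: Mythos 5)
Your overall architecture (piecewise polynomial model path, then Stone--Weierstrass approximation plus a Hermite correction at the nodes) is the same as the paper's, but there is a genuine gap in the quantitative core of the argument: matching only the $1$-jet at each node and being $C^0$-close on $[0,1]$ does \emph{not} force the corrected path to stay inside the prescribed open sets near the nodes. Write $\Ss_i$ as a union of basic open sets $\{f_{i1}>0,\ldots,f_{is}>0\}$ and let $n_{ij}$ be the order of vanishing of $f_{ij}\circ\beta_0$ at the node $\tau$. In general $n_{ij}\geq 2$: the arc produced by the curve selection lemma at $p_i\in\cl(\Ss_i)$ may enter $\Ss_i$ tangentially to its boundary (think of $\Ss_i=\{x_1>0,\ x_1^2-x_2>0,\ x_1^2+x_2>0\}$ with $p_i$ the origin and $\beta_0(t)=(t,0)$, where $f\circ\beta_0=t^2$), so your premise of ``non-degenerate one-sided tangent directions'' is unjustified. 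Moreover, even when it holds, if $\alpha-\beta_0$ vanishes to order $2$ at $\tau$ and $\|\alpha-\beta_0\|_{[0,1]}<\delta$, the second-order Taylor coefficient of $\alpha-\beta_0$ at $\tau$ is \emph{not} controlled by $\delta$ (e.g.\ $M\,t^2(1-t/\epsilon)^{2k}$ has small sup norm but $M$ large), so $f_{ij}\circ\alpha$ can become negative arbitrarily close to $\tau$; at best one gets positivity on an interval $(\tau,\tau+t_0)$ with $t_0$ depending on the particular approximant, which cannot be matched against the fixed interval on which the $C^0$ estimate applies. Your appeal to ``an implicit function / transversality argument'' does not repair this.

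The paper's proof closes exactly this gap with three ingredients you are missing. First, one computes $\ell:=\max_{i,j}\{n_{ij},p_{ij}\}$ from the vanishing orders of the defining polynomials along the model path at the nodes ({\sc Step 4} of the proof of Main Theorem \ref{nashsmart}), and the Hermite correction must match derivatives up to order $\ell$, not just order $1$. Second, the approximation must be close to $\beta_0$ not merely in the $C^0$ norm on $[0,1]$ but also in all derivatives up to order $\ell$ uniformly on compact neighborhoods $I_i$ of the nodes; this is what Bernstein polynomials provide (Theorem \ref{cotasi} and Lemma \ref{swdp}), and it is nontrivial precisely because $\beta_0$ is only piecewise smooth. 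Third, with $(f_{ij}\circ\alpha)^{(m)}(\tau)=(f_{ij}\circ\beta_0)^{(m)}(\tau)$ for $m\leq\ell$ and $\|(f_{ij}\circ\alpha)^{(n_{ij})}-(f_{ij}\circ\beta_0)^{(n_{ij})}\|_{I_i}$ small, a Rolle-type argument (Lemma \ref{clue}(1)) shows $f_{ij}\circ\alpha>0$ on the whole fixed one-sided neighborhood of $\tau$, which is what (i) and (iii) require. Without controlling these higher derivatives both at the nodes and on neighborhoods of them, the construction can fail.
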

\begin{remark}
Contrary to what we have stated in Problems \ref{1} and \ref{2} above, here the control times are inside the interval $(0,1)$. This is done to simplify the proof (and it will happen again in Main Theorem \ref{nashsmart}), but it is not limiting. As we have commented, if $\beta:[0,1]\to\R^n$ is a continuous semialgebraic path, we can reparameterize $\beta$ locally at $0$ and $1$ in order to have $\eta(\beta)\subset(0,1)$. This means that we can analytically extend $\beta$ around $0$ and $1$ to an interval $[-\delta,1+\delta]$ for some $\delta>0$ and after rescaling (to work in the interval $[0,1]$), we may assume that the control times $t_i\in(0,1)$.\hfill$\sqbullet$
\end{remark}

\subsection{Main results}
The first part of Theorem \ref{smart} concerns Problem \ref{1}, whereas its second part concerns Problem \ref{2}. We cannot expect a general result (that is, without the assumption that the $\Ss_i$ are open semialgebraic subsets of $\R^n$) of similar nature involving polynomial paths instead of Nash paths. In general, semialgebraic sets do not contain rational paths. By \cite{c,v}, a generic complex hypersurface $Z$ of $\C\PP^n$ of degree $d\geq 2n-2$ for $n\geq 4$ and of degree $d\geq 2n-1$ for $n=2,3$ does not contain rational curves. If $\Ss\subset\R^n$ is a semialgebraic set whose Zariski closure $X$ in $\R\PP^n$ is a generic hypersurface of $\R\PP^n$ of high enough degree, then its Zariski closure $Z$ in $\C\PP^n$ does not contain rational curves, so $\Ss$ cannot contain rational paths. This means in particular (as general real algebraic sets are birational to real hypersurfaces) that general semialgebraic sets do not contain polynomial paths. 

\subsubsection{General case}
In this article we prove Main Theorem \ref{nashsmart} (Figure \ref{fig1}) and we provide a somehow constructive proof. This requires to improve some results \cite{dl,f,vo} concerning the convergence at compact sets of the derivatives of Bernstein's polynomials to the derivatives of the function $f:[0,1]\to\R$ we want to approximate, even if $f$ only admits derivatives on an open strict subset of the interval $[0,1]$ (Theorem \ref{cotasi}). More precisely, we need to estimate the derivatives of the Bernstein's polynomials of a continuous semialgebraic function on the closed interval $[0,1]$. Such function $f$ is analytic on $[0,1]\setminus{\mathfrak F}$, where ${\mathfrak F}$ is a finite subset of $[0,1]$. A possibility would be to smoothen $f$ until certain order $\ell$ around the points of ${\mathfrak F}$, but this requires to modify $f$ and supposes an increase on the complexity of the construction. To avoid this smoothening of $f$, we prove Theorem \ref{cotasi} to provide bounds about the convergence of the derivatives of the Bernstein's polynomials of $f$ on the compact subsets of $[0,1]\setminus{\mathfrak F}$. 

\begin{center}
\begin{figure}[ht]
\begin{tikzpicture}[scale=1.25]
\draw[dashed,line width=1.5pt] (0,0) -- (0,2) -- (2,2) -- (0,0);
\draw[dashed,line width=1.5pt] (2,2) -- (4,2) -- (4,0) -- (2,2);
\draw[dashed,line width=1.5pt] (5,1) circle (1cm);
\draw[dashed,line width=1.5pt] (6,1) arc (270:360:1cm) arc (180:270:1cm) arc (90:180:1cm) arc (0:90:1cm);
\draw[dashed,line width=1.5pt] (8,1) -- (9,1) -- (9,2) -- (8,1);
\draw[dashed,line width=1.5pt] (9,1) -- (10,1) -- (10,0) -- (9,1);

\draw[fill=gray!100,opacity=0.5,draw=none] (0,0) -- (0,2) -- (2,2) -- (0,0);
\draw[fill=gray!100,opacity=0.5,draw=none] (2,2) -- (4,2) -- (4,0) -- (2,2);
\draw[fill=gray!100,opacity=0.5,draw=none] (5,1) circle (1cm);
\draw[fill=gray!100,opacity=0.5,draw=none] (6,1) arc (270:360:1cm) arc (180:270:1cm) arc (90:180:1cm) arc (0:90:1cm);
\draw[fill=gray!100,opacity=0.5,draw=none] (8,1) -- (9,1) -- (9,2) -- (8,1);
\draw[fill=gray!100,opacity=0.5,draw=none] (9,1) -- (10,1) -- (10,0) -- (9,1);

\draw[color=red,line width=1.5pt] (1.3,1.875) arc (110:70:2cm);
\draw[color=red,line width=1.5pt] (3.5,1) -- (4.5,1);
\draw[color=red,line width=1.5pt] (5.5,1) -- (6.5,1);
\draw[color=red,line width=1.5pt] (7.25,1.125) arc (250:290:2cm);

\draw[color=red,line width=1.5pt] (8.5,1.25) -- (9.5,0.75);

\draw (2,2) node{$\bullet$};
\draw (4,1) node{$\bullet$};
\draw (6,1) node{$\bullet$};
\draw (8,1) node{$\bullet$};
\draw (9,1) node{$\bullet$};

\draw (2,2.3) node{$q_1$};
\draw (3.6,1.3) node{$q_2$};
\draw (6.2,1.3) node{$q_3$};
\draw (8,1.3) node{$q_4$};
\draw (9,0.7) node{$q_5$};

\draw (0,1) node{$\bullet$};
\draw (4,2) node{$\bullet$};
\draw (5,0) node{$\bullet$};
\draw (7,0) node{$\bullet$};
\draw (9,2) node{$\bullet$};
\draw (10,0) node{$\bullet$};

\draw (-0.3,1) node{$p_1$};
\draw (4.3,2.3) node{$p_2$};
\draw (5,-0.3) node{$p_3$};
\draw (7,-0.3) node{$p_4$};
\draw (9,2.3) node{$p_5$};
\draw (10,-0.3) node{$p_6$};


\draw[dashed] (0,-0.75) -- (10,-0.75);

\end{tikzpicture}

\vspace*{1mm}
\begin{tikzpicture}[scale=1.25]
\draw[dashed,line width=1.5pt] (0,0) -- (0,2) -- (2,2) -- (0,0);
\draw[dashed,line width=1.5pt] (2,2) -- (4,2) -- (4,0) -- (2,2);
\draw[dashed,line width=1.5pt] (5,1) circle (1cm);
\draw[dashed,line width=1.5pt] (6,1) arc (270:360:1cm) arc (180:270:1cm) arc (90:180:1cm) arc (0:90:1cm);
\draw[dashed,line width=1.5pt] (8,1) -- (9,1) -- (9,2) -- (8,1);
\draw[dashed,line width=1.5pt] (9,1) -- (10,1) -- (10,0) -- (9,1);

\draw[fill=gray!100,opacity=0.5,draw=none] (0,0) -- (0,2) -- (2,2) -- (0,0);
\draw[fill=gray!100,opacity=0.5,draw=none] (2,2) -- (4,2) -- (4,0) -- (2,2);
\draw[fill=gray!100,opacity=0.5,draw=none] (5,1) circle (1cm);
\draw[fill=gray!100,opacity=0.5,draw=none] (6,1) arc (270:360:1cm) arc (180:270:1cm) arc (90:180:1cm) arc (0:90:1cm);
\draw[fill=gray!100,opacity=0.5,draw=none] (8,1) -- (9,1) -- (9,2) -- (8,1);
\draw[fill=gray!100,opacity=0.5,draw=none] (9,1) -- (10,1) -- (10,0) -- (9,1);


\draw[color=blue,line width=1.5pt] (0,1) .. controls (0,2) and (1.25,2) .. (2,2) .. controls (2.5,2) and (3.5,1.5) .. (4,2) .. controls (3.75,1.5) and (3.5,1) .. (4,1) .. controls (4,1) and (5,1) .. (5,0) .. controls (5,1) and (6,1) .. (6,1) .. controls (6,1) and (7,1.1) .. (7,0) .. controls (7,1.1) and (8,1) .. (8,1) .. controls (8,1) and (9,1.25) .. (9,2) .. controls (9,1.5) and (8.5,1) .. (9,1) .. controls (9,1) and (10,1) .. (10,0);

\draw (2,2) node{$\bullet$};
\draw (4,1) node{$\bullet$};
\draw (6,1) node{$\bullet$};
\draw (8,1) node{$\bullet$};
\draw (9,1) node{$\bullet$};

\draw (2,2.3) node{$q_1$};
\draw (3.8,0.8) node{$q_2$};
\draw (6.2,1.3) node{$q_3$};
\draw (8,1.3) node{$q_4$};
\draw (9,0.7) node{$q_5$};

\draw (0,1) node{$\bullet$};
\draw (4,2) node{$\bullet$};
\draw (5,0) node{$\bullet$};
\draw (7,0) node{$\bullet$};
\draw (9,2) node{$\bullet$};
\draw (10,0) node{$\bullet$};

\draw (-0.3,1) node{$p_1$};
\draw (4.3,2.3) node{$p_2$};
\draw (5,-0.3) node{$p_3$};
\draw (7,-0.3) node{$p_4$};
\draw (9,2.3) node{$p_5$};
\draw (10,-0.3) node{$p_6$};

\end{tikzpicture}
\caption{Statement of Main Theorem \ref{nashsmart}.\label{fig1}}
\end{figure}
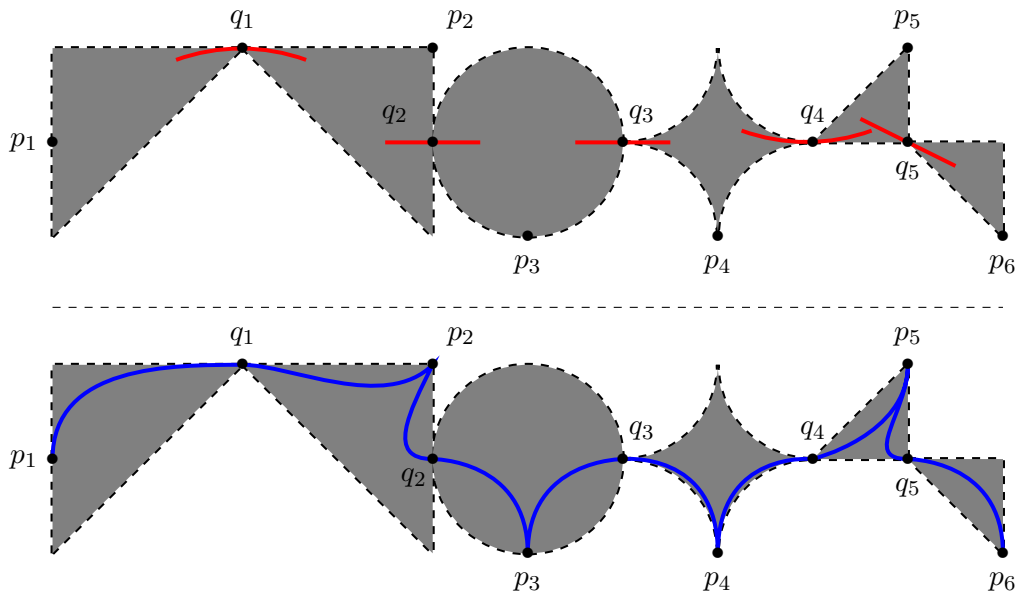
\end{center}

\begin{mthm}[Smart Nash curve]\label{nashsmart}
Let $\Ss\subset\R^n$ be a pure dimensional semialgebraic set and $\Ss_1,\ldots,\Ss_r$ open connected semialgebraic subsets of $\Reg(\Ss)$ (non-necessarily pairwise different). Pick control points $p_i\in\cl(\Ss_i)$ for $i=1,\ldots,r$ and assume there exists a Nash bridge $\Gamma_i$ between $\Ss_i$ and $\Ss_{i+1}$ for $i=1,\ldots,r-1$. Denote the base point of $\Gamma_i$ with $q_i\in\cl(\Ss_i)\cap\cl(\Ss_{i+1})$. Fix control times $s_0:=0<t_1<\cdots<t_r<1=:s_r$ and $s_i\in(t_i,t_{i+1})$ for $i=1,\ldots,r-1$. Then there exists a Nash path $\alpha:[0,1]\to\R^n$ that satisfies: 
\begin{itemize}
\item[(i)] $\alpha([0,1])\subset\bigcup_{i=1}^r\Ss_i\cup\{p_1,\ldots,p_r,q_1,\ldots,q_{r-1}\}$.
\item[(ii)] $\alpha(t_i)=p_i$ for $i=1,\ldots,r$.
\item[(iii)] $\alpha((t_i,s_i))\subset\Ss_i$, $\alpha((s_i,t_{i+1}))\subset\Ss_{i+1}$ and $\alpha(s_i)=q_i$ for $i=1,\ldots,r-1$.
\end{itemize}
In addition, if $\veps>0$ and $\beta:[0,1]\to\R^n$ is a continuous semialgebraic path such that $\eta(\beta)\subset(0,1)\setminus\{t_1,\ldots,t_r,s_1,\ldots,s_{r-1}\}$, $\beta(\eta(\beta))\subset\bigcup_{i=1}^r\Ss_i$ and $\beta$ satisfies conditions \em (i)\em, \em (ii) \em and \em (iii) \em above, we may assume that $\|\alpha-\beta\|<\veps$.
\end{mthm}

As a consequence of Main Theorem \ref{nashsmart}, we provide in \S\ref{alternative} an alternative proof of Corollary \ref{main}. Following the proof of Main Theorem \ref{nashsmart} the reader realizes that, up to resolution of singularities and the use of a Nash tubular neighborhood, the proof of Main Theorem \ref{nashsmart} is reduced to show Theorem \ref{smart}, which is constructive up to polynomial approximation (controlling the behavior of a large enough number of derivatives) of continuous semialgebraic paths (which are analytic outside a finite set) combined with Hermite's interpolation. In the proof of Theorem \ref{smart} provided in \cite{fu} we smoothen corners of continuous semialgebraic paths, whereas in this article we use the announced Theorem \ref{cotasi}. In \cite{cf1,cf2} we make an extended use of Main Theorem \ref{nashsmart} to represent compact semialgebraic sets connected by analytic paths as images of closed unit balls under Nash maps. 

\subsubsection{Piecewise linear semialgebraic sets.}
In Section \ref{s4} we simplify the proof of Main Theorem \ref{nashsmart} for piecewise linear semialgebraic sets (PL case), that is, when the involved semialgebraic sets are the interiors of convex polyhedra of dimension $n$. In this case, we approximate the polygonal path that connects the control points (and base points of the polynomial bridges) at the prescribed control times. In order to get better bounds for the degrees of the polynomial paths provided by Main Theorem \ref{nashsmart} (see \S\ref{bound}): (1) we state a (polynomial) curve selection lemma for convex polyhedra that involves degree $3$ cuspidal curves (Lemma \ref{cuspidal}), and (2) we prove that the simplest polynomial paths that connect two convex polyhedra (whose union is connected by analytic paths) are moment curves (Theorem \ref{mc}).

\begin{mthm}[PL case]\label{plcase}
Let $\Ss_1,\ldots,\Ss_r\subset\R^n$ be the interiors of $n$-dimensional convex polyhedra (non-necessarily pairwise different) and denote $\Ss:=\bigcup_{i=1}^r\Ss_i$. Pick control points $p_i\in\cl(\Ss_i)$ for $i=1,\ldots,r$ and suppose that there exists a Nash bridge $\Gamma_i$ between $\Ss_i$ and $\Ss_{i+1}$ for $i=1,\ldots,r-1$. Denote the base point of $\Gamma_i$ with $q_i\in\cl(\Ss_i)\cap\cl(\Ss_{i+1})$. Fix control times $s_0:=0<t_1<\cdots<t_r<1=:s_{r}$ and $s_i\in(t_i,t_{i+1})$ for $i=1,\ldots,r-1$. Then there exists a polynomial map $\alpha:\R\to\R^n$ that satisfies:
\begin{itemize}
\item[(i)] $\alpha([0,1])\subset\Ss\cup\{p_1,\ldots,p_r,q_1,\ldots,q_{r-1}\}$.
\item[(ii)] $\alpha(t_i)=p_i$ for $i=1,\ldots,r$.
\item[(iii)] $\alpha((t_i,s_i))\subset\Ss_i$, $\alpha((s_i,t_{i+1}))\subset\Ss_{i+1}$ and $\alpha(s_i)=q_i$ for $i=1,\ldots,r-1$.
\item[(iv)] The restriction $\alpha|_{[t_1,t_r]}$ is as close as wanted to the piecewise linear parameterization $\beta:[t_1,t_r]\to\cl(\Ss)$ of the polygonal path that connects the points $p_1,q_1,p_2,\cdots,p_{r-1},q_{r-1},p_r$, passes through these points at the control times $t_1<s_1<t_2<\cdots<t_{r-1}<s_{r-1}<t_r$ and satisfies $\eta(\beta)\subset\{t_2,\ldots,t_{r-1},s_1,\ldots,s_{r-1}\}$.
\end{itemize}
\end{mthm}

\subsubsection{Graph}\label{graph}
Let $\Ss\subset\R^n$ be a $d$-dimensional semialgebraic set and let $\Ss_1,\ldots,\Ss_r\subset\R^n$ be connected open semialgebraic subsets of $\Reg(\Ss)$ of dimension $d$. Observe that $\Ss_i$ is a Nash manifold for $i=1,\ldots,r$. Assume $\bigcup_{i=1}^r\Ss_i\subset\Ss\subset\cl(\bigcup_{i=1}^r\Ss_i)$ and $\Ss$ is connected by analytic paths. We construct a graph $\Lambda$ to approach (Nash) logistic problems in $\Ss$ in the following way. The vertices of the graph are $\Ss_1,\ldots,\Ss_r$ and we have an edge between the vertices $\Ss_i$ and $\Ss_j$ if there exists a Nash bridge inside $\Ss$ between the Nash manifolds $\Ss_i$ and $\Ss_j$. By the following lemma (see also \cite[Main Thm.1.4 \& Cor.7.6]{f1} and \cite[Lem.4.2]{fu}) the previous graph is connected (because $\Ss$ is connected by analytic paths) and one can approach with the help of Main Theorem \ref{nashsmart} (Nash) logistic problems between the `regions' $\Ss_k$ using the existing Nash bridges between them (see below the alternative proof of Corollary \ref{main} as an example of application of this strategy).

\begin{lem}\label{graphconnected}
The graph $\Lambda$ is connected.
\end{lem}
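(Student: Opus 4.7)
The strategy is to argue by contradiction. Suppose $\Lambda$ decomposes as a disjoint union of nonempty subgraphs $\Lambda'\sqcup\Lambda''$ with no edges between them, and set $T':=\bigcup_{\Ss_i\in\Lambda'}\Ss_i$ and $T'':=\bigcup_{\Ss_j\in\Lambda''}\Ss_j$. Pick $a\in T'$ and $b\in T''$. The plan is to produce a Nash path $\gamma:[0,1]\to\Ss$ from $a$ to $b$ whose image meets the frontier set $\Bb:=\Ss\setminus\bigcup_{i=1}^r\Ss_i$ only at a finite set $0<u_1<\cdots<u_m<1$. The restrictions of $\gamma$ near each $u_k$ will then directly furnish Nash bridges between the $\Ss_i$'s visited just before and just after $u_k$, and chaining them along $\gamma$ will give a path of edges in $\Lambda$ from a vertex of $\Lambda'$ to a vertex of $\Lambda''$, contradicting the assumption.

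To build $\gamma$, I would apply Theorem \ref{ni} to obtain a Nash surjection $f:\R^d\to\Ss$. The set $\Bb$ has dimension at most $d-1$: it is contained in $\Sing(\Ss)$ together with the topological frontier, inside the $d$-dimensional Nash manifold $\Reg(\Ss)$, of the open subset $\bigcup_i\Ss_i$, and both pieces have dimension strictly less than $d$. Since $\dim\R^d=\dim\Ss=d$, a generic fibre of $f$ is $0$-dimensional, and a standard stratification argument for Nash maps yields $\dim f^{-1}(\Bb)\le d-1$ as well. Picking preimages $\tilde a,\tilde b\in\R^d$ of $a$ and $b$ and choosing a polynomial path $\delta:\R\to\R^d$ of sufficiently high degree with $\delta(0)=\tilde a$ and $\delta(1)=\tilde b$ generically, a transversality-type argument (a generic one-dimensional polynomial curve in $\R^d$ meets a semialgebraic set of codimension $\geq 1$ in a zero-dimensional set) guarantees that $\delta^{-1}(f^{-1}(\Bb))\cap[0,1]$ is a finite subset of $(0,1)$, and $\gamma:=f\circ\delta$ is the desired Nash path.

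With $\gamma$ in hand, consider the connected components $J_0,\ldots,J_m$ of $[0,1]\setminus\{u_1,\ldots,u_m\}$. On each $J_l$ the image $\gamma(J_l)\subset\bigcup_i\Ss_i$ is connected; moreover, two overlapping $\Ss_i,\Ss_j$ are always joined by an edge of $\Lambda$, since any small Nash arc contained in the open set $\Ss_i\cap\Ss_j$ trivially satisfies the definition of a Nash bridge inside $\Ss$. Hence the set of $\Ss_i$'s that $\gamma$ visits along $J_l$ all lie in a single component $[J_l]$ of $\Lambda$. Using that each $\gamma^{-1}(\Ss_i)$ is an open semialgebraic subset of $[0,1]$ (hence a finite union of open intervals), one finds $\veps>0$ and indices $p,q$ with $\gamma((u_k-\veps,u_k))\subset\Ss_p$ and $\gamma((u_k,u_k+\veps))\subset\Ss_q$; after reparameterizing the restriction $\gamma|_{[u_k-\veps,u_k+\veps]}$ to $[-1,1]$ one obtains a Nash arc with left branch in $\Ss_p\in[J_{k-1}]$, right branch in $\Ss_q\in[J_k]$, and base point $\gamma(u_k)\in\Bb$, i.e.\ a Nash bridge inside $\Ss$ between $\Ss_p$ and $\Ss_q$. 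This forces $[J_{k-1}]=[J_k]$ for every $k$, so $[J_0]=\cdots=[J_m]$; but $[J_0]$ contains a vertex of $\Lambda'$ (an $\Ss_i\ni a$) while $[J_m]$ contains a vertex of $\Lambda''$, a contradiction.

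The main technical obstacle is the dimension-plus-transversality step in the second paragraph ensuring that $\gamma^{-1}(\Bb)$ is finite; the bridge-extraction in the third paragraph is then routine given the semialgebraic structure of $\gamma$.
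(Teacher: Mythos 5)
Your argument is essentially correct, but it takes a genuinely different route from the paper's. The paper proves connectedness of $\Lambda$ by recursively reordering the vertices: assuming $\Tt_1:=\bigcup_{j\le k}\Ss_j$ and $\Tt_2:=\bigcup_{\ell>k}\Ss_\ell$ are disjoint, it takes the Zariski closure $Y$ of their frontiers (of dimension $\le d-1$ by \cite[Prop.2.8.13]{bcr}), picks endpoints in $\Tt_1\setminus Y$ and $\Tt_2\setminus Y$, joins them by a Nash path $\alpha$ using \emph{directly} the hypothesis that $\Ss$ is connected by analytic paths, and extracts a single bridge at $t_0=\inf(\alpha^{-1}(\Tt_2))$; finiteness of $\alpha^{-1}(Y)$ follows from the identity principle, since the Nash function cutting out $Y$ does not vanish identically along $\alpha$. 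You instead argue by contradiction, produce one global path crossing the exceptional set $\Bb$ finitely often, and chain bridges at all crossings. Both skeletons work, and your bridge-extraction in the third paragraph is the same local argument the paper uses (including shrinking $\veps$ so that each one-sided germ lies in a single $\Ss_i$, and an appeal to semialgebraic triviality to make the two branches injective so that one really obtains a Nash \emph{arc}; you gloss this last point, and it does need saying when $\Ss_p\cap\Ss_q=\varnothing$).

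Two remarks on what your route costs. First, you invoke Theorem \ref{ni} to realize $\Ss$ as a Nash image of $\R^d$. This is legitimate as a citation, but it reintroduces exactly the existential result that this part of the paper is designed to avoid (Lemma \ref{graphconnected} feeds the ``alternative proof'' of Corollary \ref{main}, whose stated purpose is independence from \cite[Main Thm.1.4]{f1}), and it obliges you to prove two extra facts: that $\dim(f^{-1}(\Bb))\le d-1$, which requires the generic-rank-$d$ argument for $f$ (the critical locus is a proper Nash subset because $\dim(f(\R^d))=d$, and off it $f$ is a local diffeomorphism onto its image) rather than just ``generic fibres are $0$-dimensional''; and that a suitable polynomial path meets $f^{-1}(\Bb)$ in a finite set. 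Second, the ``transversality-type argument'' should be made precise: it suffices to take a polynomial $g$ vanishing on the Zariski closure of $f^{-1}(\Bb)$ (a set of dimension $\le d-1$) and to choose $\delta$ with $g\circ\delta\not\equiv0$, for instance by taking $\tilde a$ outside that Zariski closure; then $\delta^{-1}(f^{-1}(\Bb))$ lies in the zero set of a nonzero univariate polynomial, hence is finite. This is the same identity-principle device the paper applies directly to $\alpha$ and $Y$ without passing through $\R^d$ at all, and doing it the paper's way would shorten your second paragraph considerably.
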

\begin{proof}
It is enough: {\em to reorder recursively the indices $i=1,\ldots,r$ in such a way that for each $i=2,\ldots,r$ there exists a Nash bridge inside $\Ss$ between $\Ss_i$ and some $\Ss_j$ with $1\leq j\leq i-1$}. 

Suppose we have chosen $\Ss_1,\ldots,\Ss_k$ satisfying the previous conditions and let us choose a suitable $\Ss_{k+1}$. Denote $\Tt_1:=\bigcup_{j=1}^k\Ss_j$ and $\Tt_2:=\bigcup_{\ell=k+1}^r\Ss_\ell$. If $\Tt_1\cap\Tt_2\neq\varnothing$, there exists an index $\ell\in\{k+1,\ldots,r\}$ such that $\Ss_\ell\cap\Ss_j\neq\varnothing$ for some $j\in\{1,\ldots,k\}$. We interchange $k+1$ and $\ell$ in order to have $\Ss_{k+1}=\Ss_\ell$. Pick a point $x\in\Ss_j\cap\Ss_{k+1}$ and any Nash arc $\alpha:[-1,1]\to\Ss_j\cap\Ss_{k+1}$ such that $\alpha(0)=x$. Observe that $\alpha$ provides a Nash bridge inside $\Ss$ between some $\Ss_j$ with $1\leq j\leq k$ and $\Ss_{k+1}$. 

Assume next $\Tt_1$ and $\Tt_2$ are disjoint (open semialgebraic subsets of $\Reg(\Ss)$). Let $Y$ be the Zariski closure of $(\cl(\Tt_1)\setminus\Tt_1)\cup(\cl(\Tt_2)\setminus\Tt_2)$, which by \cite[Prop.2.8.13]{bcr} has dimension $\leq d-1$. We have 
\begin{multline}\label{finitey}
\Ss\subset\cl(\Ss)=\cl\Big(\bigcup_{i=1}^r\Ss_i\Big)=\cl(\Tt_1)\cup\cl(\Tt_2)\\
=\Tt_1\cup\Tt_2\cup(\cl(\Tt_1)\setminus\Tt_1)\cup(\cl(\Tt_2)\setminus\Tt_2)\subset\Tt_1\cup\Tt_2\cup Y.
\end{multline}

As $\dim(\Tt_1)=\dim(\Tt_2)=d$, the differences $\Tt_1\setminus Y$ and $\Tt_2\setminus Y$ are non-empty semialgebraic sets. Pick points $x\in\Tt_1\setminus Y=\Tt_1\setminus(\cl(\Tt_2)\cup Y)$ and $y\in\Tt_2\setminus Y=\Tt_2\setminus(\cl(\Tt_1)\cup Y)$ (recall that $\Tt_1$ and $\Tt_2$ are disjoint and $Y$ is the Zariski closure of $(\cl(\Tt_1)\setminus\Tt_1)\cup(\cl(\Tt_2)\setminus\Tt_2)$). As $\Ss$ is connected by analytic paths, there exists a Nash path $\alpha:[0,1]\to\Ss$ such that $\alpha(0)=x$ and $\alpha(1)=y$. As $\alpha^{-1}(Y)$ is both a closed subset of $[0,1]$ and the zero set in $(0,1)$ of a Nash function defined on $[0,1]$ (because $x,y\not\in Y$), we deduce by the Identity Principle that $\alpha^{-1}(Y)$ has dimension $0$, so it is a finite subset of $[0,1]$. By \eqref{finitey} we deduce $[0,1]\setminus(\alpha^{-1}(\Tt_1\setminus Y)\cup\alpha^{-1}(\Tt_2\setminus Y))\subset\alpha^{-1}(Y)$ is a finite set. As $0\in\alpha^{-1}(\Tt_1\setminus Y)=\alpha^{-1}(\Tt_1)\setminus\alpha^{-1}(\cl(\Tt_2)\cup Y)$ and $1\in\alpha^{-1}(\Tt_2\setminus Y)$, we have $0<t_0:=\inf(\alpha^{-1}(\Tt_2))<1$. Observe that $[0,t_0)\setminus\alpha^{-1}(Y)\subset\alpha^{-1}(\Tt_1\setminus Y)$ and $t_0\in\cl(\alpha^{-1}(\Tt_2))$. As $\alpha^{-1}(Y)$ is a finite set and $\alpha^{-1}(\Tt_2)$ is a non-empty open semialgebraic subset of $[0,1]$, there exists a small enough $\veps>0$ such that $\alpha([t_0-\veps,t_0))\subset\Tt_1=\bigcup_{j=1}^k\Ss_j$ and $\alpha((t_0,t_0+\veps])\subset\Tt_2=\bigcup_{\ell=k+1}^r\Ss_\ell$. Shrinking $\veps>0$ if necessary, we may assume $\alpha([t_0-\veps,t_0))\subset\Ss_j$ for some $1\leq j\leq k$ and $\alpha((t_0,t_0+\veps])\subset\Ss_{\ell}$ for some $k+1\leq \ell\leq r$. As $\alpha$ is a (non-constant) Nash path, we may assume (shrinking $\veps>0$ again if necessary) by semialgebraic triviality \cite[Thm.9.3.2]{bcr} that the restrictions $\alpha|_{[t_0-\veps,t_0)}$ and $\alpha|_{(t_0,t_0+\veps]}$ are injective. As $\Ss_j\cap\Ss_\ell=\varnothing$, we deduce $\alpha|_{[t_0-\veps,t_0+\veps]}$ is a Nash arc. We interchange $k+1$ and $\ell$ in order to have $\Ss_{k+1}=\Ss_{\ell}$. Thus, there exists a Nash bridge inside $\Ss$ between $\Ss_{k+1}$ and some $\Ss_j$ with $1\leq j\leq k$, as required.
\end{proof}

In case $\Ss_1,\ldots,\Ss_r\subset\R^n$ are open semialgebraic subsets of $\R^n$, we can study the previous problems from the polynomial point of view. Using Bernstein's polynomials (Theorem \ref{cotasi}), we can estimate the degree of the constructed polynomial paths, especially if each $\Ss_k$ is in addition the interior of an $n$-dimensional convex polyhedron (proof of Main Theorem \ref{plcase} in Section \ref{s4} and \S\ref{bound}). This also allows to estimate in Remark \ref{plcasec} the degree of the polynomial maps that appear in \cite[Thm.1.2 \& Thm.1.3]{fu} to represent compact semialgebraic sets that are connected by analytic paths as the image of closed unit balls under polynomials maps.

\subsubsection{Alternative proof of Corollary {\em\ref{main}}}\label{alternative}
Let $\Tt_1,\ldots,\Tt_s$ be the connected components of $\Reg(\Ss)$, which are connected Nash manifolds. Let $\Uu_k\in\{\Tt_1,\ldots,\Tt_s\}$ be such that $x_i\in\cl(\Uu_{i})$ for $i=1,\ldots,r$. Consider the graph $\Lambda$ whose vertices are the connected Nash manifolds $\Tt_i$ and such that there exists an edge between a pair of vertices $\Tt_i$ and $\Tt_j$ if and only if there exists a Nash bridge $\Gamma$ inside $\Ss$ between the Nash manifolds $\Tt_i$ and $\Tt_j$. As $\Ss$ is connected by analytic paths, the graph $\Lambda$ is by Lemma \ref{graphconnected} connected. Thus, given the sequence of vertices $\Uu_{1},\ldots,\Uu_{r}$, there exists a path $P$ in the graph $\Lambda$ that passes through $\Uu_{1},\ldots,\Uu_{r}$ in this order. We collected all the ordered vertices of $P$ (including repetitions if needed) and denote them by $\Ss_1,\ldots,\Ss_\ell$ in such a way that there exists a Nash path $\Gamma_i$ between $\Ss_i$ and $\Ss_{i+1}$ for $i=1,\ldots,\ell-1$. In addition, there exist indices $1=:j_1<\ldots<j_r:=\ell$ such that $\Ss_{j_k}=\Uu_k$ for $k=1,\ldots,r$. For each $i\in\{1,\ldots,\ell\}\setminus\{j_1,\ldots,j_r\}$ we pick a point $p_i\in\Ss_i$ and denote $p_{j_k}:=x_k$ for $k=1,\ldots,r$. Denote the base point of $\Gamma_i$ with $q_i\in\cl(\Ss_i)\cap\cl(\Ss_{i+1})\subset\Ss$ for $i=1,\ldots,\ell-1$. Take times $0=:w_1<\ldots<w_\ell:=1$ such that $w_{j_k}=t_k$ for $k=1,\ldots,r$, $s_i\in(w_i,w_{i+1})$ for $i=1,\ldots,\ell-1$, $s_0<0$ and $s_{\ell}>1$. By Main Theorem \ref{nashsmart} there exists a Nash path $\alpha:[s_0,s_\ell]\to\R^n$ that satisfies: 
\begin{itemize}
\item[(i)] $\alpha([s_0,s_\ell])\subset\bigcup_{i=1}^r\Ss_i\cup\{p_1,\ldots,p_r,q_1,\ldots,q_{r-1}\}\subset\Ss$.
\item[(ii)] $\alpha(w_i)=p_i$ for $i=1,\ldots,r$.
\item[(iii)] $\alpha((w_i,s_i))\subset\Ss_i$, $\alpha((s_i,w_{i+1}))\subset\Ss_{i+1}$ and $\alpha(s_i)=q_i$ for $i=1,\ldots,r-1$.
\end{itemize}
Consequently, $\alpha|_{[0,1]}:[0,1]\to\Ss$ is a Nash path such that $\alpha(t_k)=x_k$ for $k=1,\ldots,r$, as required.
\qed

\subsection{Structure of the article}
The article is organized as follows. In Section \ref{s2} we present some preliminary concepts and tools. We would like to mention some results concerning Stone-Weierstrass' polynomial approximation using Bernstein's polynomials (Theorem \ref{cotasi}, that follows the ideas developed in \cite{f} and whose proof is postponed until Section \ref{s5}) and some of its main consequences (Lemmas \ref{swdp} and \ref{clue}). In Section \ref{s3} we prove the main result (Main Theorem \ref{nashsmart}), whereas in Section \ref{s4} we estimate the degree of the polynomial paths provided by Theorem \ref{smart} when the involved semialgebraic sets are piecewise linear (Main Theorem \ref{plcase}). Consequently, one can provide bounds for the degrees of the polynomial maps that appear in \cite[Thm.1.3 \& Thm.1.4]{fu} (see Remark \ref{plcasec}). We postpone some of the technicalities of the proof of Main Theorem \ref{nashsmart} until Appendix \ref{A} in order to make its proof more discurse and intuitive.

\subsection{Acknowledgements}
The author is deeply indebted with Prof. Safey el Din and Dr. Ueno for very helpful and enlightening comments and inspiring talks during the preparation of this work. The author is very grateful to S. Schramm for a careful reading of the final version and for the suggestions to refine its redaction. The author also thanks the anonymous referees for very valuable suggestions to correct several inaccuracies and incomplete arguments. These suggestions have notably improved and made clearer and precise the final version of this article. 

\section{Basic concepts and preliminary results}\label{s2}

In this section we recall and present some preliminary concepts and results that will be the key to prove Main Theorem \ref{nashsmart}. 

\subsection{Regular and singular points of a semialgebraic set.}
Recall that the set of regular points of a semialgebraic set $\Ss\subset\R^n$ is defined as follows. Let $X$ be the Zariski closure of $\Ss$ in $\R^n$ and $\widetilde{X}$ the complexification of $X$, that is, the smallest complex algebraic subset of $\C^n$ that contains $X$. The set $\Sing(\widetilde{X})$ of singular points of $\widetilde{X}$ corresponds to the collection of those points of $\widetilde{X}$ that do not admit a neighborhood diffeomorphic to a complex manifold. Define $\Reg(X):=X\setminus\Sing(\widetilde{X})$ and let $\Reg(\Ss)$ be the interior of $\Ss\setminus\Sing(\widetilde{X})$ in $\Reg(X)$. Observe that $\Reg(\Ss)$ is a finite union of disjoint Nash manifolds maybe of different dimensions. We refer the reader to \cite[\S2.A]{f1} for further details concerning the set of regular points of a semialgebraic set. 

\subsection{Hironaka's desingularization}
A rational map $f:=(f_1,\ldots,f_n):Z\to\R^n$ on an algebraic set $Z\subset\R^m$ is \em regular \em if its components are quotients of polynomials $f_k:=\frac{g_k}{h_k}$ such that $Z\cap\{h_k=0\}=\varnothing$. Hironaka's desingularization results \cite{hi} are powerful tools and we recall here the one we need.

\begin{thm}[Desingularization]\label{hi1}
Let $X\subset\R^n$ be an algebraic set. Then there exist a non-singular algebraic set $X'\subset\R^m$ and a proper regular map $f:X'\to X$ such that
$$
f|_{X'\setminus f^{-1}(\Sing(X))}:X'\setminus f^{-1}(\Sing(X))\rightarrow X\setminus\Sing(X)
$$
is a diffeomorphism whose inverse map is also regular.
\end{thm}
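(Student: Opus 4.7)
The final statement is Hironaka's desingularization theorem, which the author cites from \cite{hi} as a black-box tool. A self-contained proof is far beyond the scope of a short sketch, but I can outline the general strategy that the classical (and modern) proofs follow.

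The plan is to proceed by a finite sequence of blowups along smooth centers contained in the singular locus, driven by a local invariant that is shown to strictly decrease under each admissible blowup. First I would embed $X$ inside a smooth ambient variety (one may take $\R^n$ itself, or the complexification $\widetilde{X}\subset\C^n$ if it is more comfortable to work over an algebraically closed field and descend to $\R$ at the end, since Hironaka's construction is equivariant under the Galois action $z\mapsto\bar z$ and hence produces maps defined over $\R$). Then I would attach to each point of $\Sing(X)$ a resolution invariant -- historically Hironaka's $\nu^*$-invariant based on standard bases, or in modern treatments (Bierstone--Milman, Villamayor, Encinas--Hauser, Kollár) the pair consisting of the maximal order of the ideal and the associated Hilbert--Samuel function -- and use a maximal contact hypersurface to set up an induction on the dimension of the ambient space.

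Next I would define the resolution algorithm: at each step one identifies the maximal locus of the invariant, checks it is a smooth (locally closed) subvariety $C$ of the current ambient space, performs the blowup $\pi\colon Z'\to Z$ along $C$, takes the strict transform of $X$, and verifies that the invariant strictly drops on the preimage of $C$ while remaining unchanged elsewhere. A careful well-ordering argument (or noetherian induction on the value of the invariant) shows that after finitely many such steps the strict transform becomes smooth. This produces the smooth real algebraic variety $X'\subset\R^m$ and the composition $f:X'\to X$ of the successive blowups.

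The required properties of $f$ can then be read off from the construction: $f$ is proper because each individual blowup is projective and compositions of projective morphisms are proper; $f$ is regular (as a real rational map) because blowups along smooth real algebraic centers are given by regular maps in suitable affine charts; and the restriction $f|_{X'\setminus f^{-1}(\Sing(X))}$ is a diffeomorphism with regular inverse since at every step of the resolution the center $C$ was contained in the singular locus of the current strict transform, so away from those centers the map is a composition of isomorphisms.

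The main obstacle is of course the termination of the algorithm, i.e.\ showing that the chosen invariant really does strictly decrease under every admissible blowup and that the induction on dimension via maximal contact hypersurfaces genuinely closes up. This is the technical heart of Hironaka's argument and is what makes a fully rigorous proof occupy hundreds of pages; in a proposal one can only point to it and defer to \cite{hi} (or to one of the simplified modern accounts).
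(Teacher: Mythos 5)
The paper does not prove this statement: it is imported verbatim from Hironaka's work \cite{hi} and used as a black box, so there is no in-paper argument to compare yours against. Your outline is a faithful summary of the classical resolution strategy (invariant, maximal contact, induction on ambient dimension, termination), and your remarks on why $f$ is proper, regular, and an isomorphism off the singular locus are the right ones; the only real-specific point you leave implicit is that the resolved variety, a priori living in a projective space, can be re-embedded as a non-singular \emph{affine} algebraic set $X'\subset\R^m$ because real projective space is itself an affine real algebraic variety --- this is needed to match the exact form of the statement.
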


\begin{remark}
If $X$ is pure dimensional, $X\setminus\Sing X$ is dense in $X$. As $f$ is proper, it is surjective.\hfill$\sqbullet$
\end{remark}

\subsection{Topology of spaces of continuous functions}
Let $[a,b]\subset\R$ be a compact interval and $\Omega\subset[a,b]$ an open set. For each $\ell\geq1$ consider the space $\Cont^\ell_\Omega([a,b],\R)$ of continuous functions on $[a,b]$ that are $\Cont^\ell$ on $\Omega$. We endow $\Cont^\ell_\Omega([a,b],\R)$ with the $\Cont^\ell_\Omega$ topology that has as basis of open neighborhoods of $g\in\Cont^\ell_\Omega([a,b],\R)$ the family of sets of the type:
$$
{\mathcal U}^\ell_{g,K,\veps}:=\{f\in\Cont_\Omega^\ell([a,b],\R):\ \|f-g\|_{[a,b]}<\veps,\ \|f^{(k)}-g^{(k)}\|_{K}<\veps:\ k=1,\ldots,\ell\}
$$
where $K\subset\Omega$ is a compact set, $\veps>0$ and $\|h\|_T:=\max\{h(x):\ x\in T\}$ for each compact subset $T\subset[a,b]$. Sometimes we will omit the subindex $T$ when it is clear from the context. If $\Omega=[a,b]$, the previous topology is the usual $\Cont^\ell$ topology of $\Cont^\ell([a,b])$. Observe that $\Cont_\Omega^\ell([a,b],\R^n)=\Cont_\Omega^\ell([a,b],\R)\times\cdots\times\Cont_\Omega^\ell([a,b],\R)$ and we consider the product topology in this space. In particular if $f:=(f_1,\ldots,f_n)\in\Cont_\Omega^\ell([a,b],\R^n)$ and $T\subset[a,b]$ is a compact set, we denote to lighten notation $\|f\|_T:=\|\sqrt{f_1^2+\cdots+f_n^2}\|_T=\max\{\sqrt{f_1^2(x)+\cdots+f_n^2(x)}:\ x\in T\}$. If $X\subset[a,b]$, one defines analogously the $\Cont^{\ell}_{\Omega\cap X}$-topology of the space $\Cont^\ell_{\Omega\cap X}(X,\R)$. 

The following result follows from \cite[\S2.5. Ex.10, pp. 64-65]{hir3} using standard arguments.
\begin{lem}\label{cont}
Let $U\subset\R^n$ be an open set and $\varphi:U\to\R^m$ a $\Cont^k$ map for some $0\leq k\leq\ell$. Consider the map $\varphi^*:\Cont^\ell_{\Omega}([a,b],U)\to\Cont^k_{\Omega}([a,b],\R^m),\ f\mapsto\varphi\circ f$, where both spaces are endowed with their $\Cont^k_\Omega$-topologies. Then $\varphi^*$ is continuous. 
\end{lem}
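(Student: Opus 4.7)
The assertion is essentially a several-variable version of the fact that composition is continuous in the $\Cont^k$-topology (cf.\ \cite[\S2.5]{hir3}), adapted to functions that are only $\Cont^\ell$ on an open subset $\Omega\subsetneq[a,b]$. I would fix $g\in\Cont^\ell_\Omega([a,b],U)$ and a basic $\Cont^k_\Omega$-neighborhood ${\mathcal U}^k_{\varphi\circ g,K,\veps}$ of $\varphi^*(g)=\varphi\circ g$, where $K\subset\Omega$ is compact and $\veps>0$, and then exhibit $\delta>0$ and a compact $K'\subset\Omega$ (I can simply take $K'=K$) such that $\varphi^*({\mathcal U}^k_{g,K,\delta})\subset{\mathcal U}^k_{\varphi\circ g,K,\veps}$.

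The first preparation is geometric: since $g([a,b])\subset U$ is compact and $U$ is open, pick a compact neighborhood $L\subset U$ of $g([a,b])$ and $\eta>0$ with $\{x\in\R^n:\dist(x,g([a,b]))<\eta\}\subset L$. On $L$, all partial derivatives $D^\alpha\varphi$ with $|\alpha|\leq k$ exist, are continuous, and are therefore uniformly bounded and uniformly continuous. Consequently, if $\|f-g\|_{[a,b]}<\eta$, then $f([a,b])\subset L$, so $\varphi\circ f$ is well defined and is $\Cont^k$ on $\Omega$ whenever $f\in\Cont^\ell_\Omega([a,b],U)$ (using $k\leq\ell$).

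The second ingredient is the Faà di Bruno formula: for every $j=1,\ldots,k$ and every $t\in\Omega$,
\[
(\varphi\circ f)^{(j)}(t)=\sum_{\pi}c_\pi\,D^{|\pi|}\varphi(f(t))\bigl[f^{(i_1)}(t),\ldots,f^{(i_{|\pi|})}(t)\bigr],
\]
summed over a finite family of set partitions $\pi$ of $\{1,\ldots,j\}$ with blocks of sizes $i_1,\ldots,i_{|\pi|}\leq j$, and with universal constants $c_\pi$. Thus $(\varphi\circ f)^{(j)}(t)$ is a fixed polynomial expression in $D^\alpha\varphi(f(t))$ ($|\alpha|\leq j$) and in $f^{(i)}(t)$ ($1\leq i\leq j$). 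For the case $j=0$ it suffices to use the uniform continuity of $\varphi$ on $L$.

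With this in hand, the estimate is routine. Writing $\Delta_\alpha(t):=D^\alpha\varphi(f(t))-D^\alpha\varphi(g(t))$ and $\delta_i(t):=f^{(i)}(t)-g^{(i)}(t)$, a telescoping expansion of the formula above bounds $\|(\varphi\circ f)^{(j)}-(\varphi\circ g)^{(j)}\|_K$ by a finite sum of products, each factor of which is either (a) a uniform bound for some $D^\alpha\varphi$ on $L$, (b) a uniform bound for $g^{(i)}$ on the compact $K$, (c) a term $\|\Delta_\alpha\|_K$ with $|\alpha|\leq k$, or (d) a term $\|\delta_i\|_K$ with $1\leq i\leq k$. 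By uniform continuity of $D^\alpha\varphi$ on $L$, $\|\Delta_\alpha\|_K$ is controlled by $\|f-g\|_{[a,b]}$; by hypothesis, $\|\delta_i\|_K$ is directly controlled. Hence one can choose $\delta>0$ small enough (depending on $\varphi$, $g$, $K$, $\ell$, $k$ and $\veps$) so that whenever $\|f-g\|_{[a,b]}<\delta$ and $\|f^{(i)}-g^{(i)}\|_K<\delta$ for $i=1,\ldots,k$, one has $\|(\varphi\circ f)^{(j)}-(\varphi\circ g)^{(j)}\|_K<\veps$ for $j=0,1,\ldots,k$. This gives $\varphi^*({\mathcal U}^k_{g,K,\delta})\subset{\mathcal U}^k_{\varphi\circ g,K,\veps}$, proving continuity of $\varphi^*$ at the arbitrary point $g$.

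The only genuine obstacle is bookkeeping: the Faà di Bruno expansion and the telescoping of the difference $(\varphi\circ f)^{(j)}-(\varphi\circ g)^{(j)}$ become notationally heavy for $k>1$, but no new idea is needed beyond uniform continuity of the derivatives $D^\alpha\varphi$ on $L$ and the a priori bounds on $g^{(i)}|_K$.
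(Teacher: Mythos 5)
Your proof is correct and is exactly the ``standard argument'' that the paper delegates to the cited exercise in Hirsch: the paper gives no written proof of this lemma, and your combination of a compact neighborhood $L$ of $g([a,b])$ in $U$, uniform continuity of the derivatives $D^\alpha\varphi$ on $L$, and the Fa\`a di Bruno expansion with a telescoping estimate is precisely what is meant. The only point worth recording in the write-up is that the non-difference factors in the telescoping may involve $f^{(i)}|_K$ as well as $g^{(i)}|_K$, which stay uniformly bounded once $\delta\leq1$.
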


In addition, one has the following.
\begin{lem}\label{cont2}
Let $X\subset[a,b]$ and consider the restriction map
$$
\rho:\Cont_\Omega^\ell([a,b],\R^n)\to\Cont_{\Omega\cap X}^\ell(X,\R^n),\ f\mapsto f|_{X},
$$ 
where the spaces are endowed with their respective $\Cont^\ell_{\Omega}$ and $\Cont^\ell_{\Omega\cap X}$ topologies. Then $\rho$ is continuous and if in addition $X\subset[a,b]$ is closed, then $\rho$ is surjective. 
\end{lem}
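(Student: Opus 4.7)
The plan is to reduce the problem to the scalar case $n=1$ via the product structure of both spaces, handle continuity of $\rho$ directly from the defining basic neighborhoods, and establish surjectivity (when $X$ is closed) by a piecewise extension on the connected components of $[a,b]\setminus X$ combined with Hermite interpolation.

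For continuity, fix $g\in\Cont^\ell_\Omega([a,b],\R)$ and a basic neighborhood $\mathcal{U}^\ell_{g|_X,K,\veps}$ of $\rho(g)$, where $K\subset\Omega\cap X$ is compact. Since $\Omega\cap X\subset\Omega$, the set $K$ is also compact in $\Omega$, so $\mathcal{U}^\ell_{g,K,\veps}$ is a neighborhood of $g$ in $\Cont^\ell_\Omega([a,b],\R)$. For any $f$ in this neighborhood one has $\|f|_X-g|_X\|_X\leq\|f-g\|_{[a,b]}<\veps$ and, since $(f|_X)^{(k)}|_K=f^{(k)}|_K$ for each $k\leq\ell$, also $\|(f|_X)^{(k)}-(g|_X)^{(k)}\|_K<\veps$. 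Hence $\rho(f)\in\mathcal{U}^\ell_{g|_X,K,\veps}$, yielding continuity.

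For surjectivity when $X$ is closed, observe that $[a,b]\setminus X$ is open in $[a,b]$, hence a countable disjoint union of relatively open intervals $\{I_j\}_j$. Given $g\in\Cont^\ell_{\Omega\cap X}(X,\R)$, my plan is to extend $g$ to each $I_j=(a_j,b_j)$ by a Hermite interpolating polynomial of degree $\leq 2\ell+1$ matching, at each endpoint lying in $\Omega\cap X$, the value of $g$ and its one-sided derivatives up to order $\ell$ from within $X$; at endpoints outside $\Omega$ (for instance at the boundary points $a$ or $b$ when these lie outside $X$, or at a point of $X\setminus\Omega$) it suffices to match only the value of $g$, or to take any smooth continuous extension. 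Gluing these polynomial pieces with $g$ on $X$ defines $f:[a,b]\to\R$, continuous on $[a,b]$ because boundary values are matched, and $\Cont^\ell$ on $\Omega$: at interior points of $X\cap\Omega$ by hypothesis on $g$, at points of $\Omega\setminus X$ because $f$ is polynomial there, and at boundary points of $X$ in $\Omega$ by the prescribed matching of derivatives.

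The main obstacle will be the precise interpretation of $\Cont^\ell$-regularity of $g$ at boundary points of $X$ lying in $\Omega$: at such a point $x_0$, $g$ has derivatives only from within $X$ (and possibly only from one side if $x_0$ is an endpoint of a maximal interval contained in $X$), and the extension must produce a function $f$ whose two-sided derivatives at $x_0$ coincide with these one-sided data from $g$. This is exactly what Hermite interpolation on the adjacent component of $[a,b]\setminus X$ achieves, at the cost of raising the local polynomial degree up to $2\ell+1$, so the construction extends consistently across each interface.
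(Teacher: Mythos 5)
The paper states this lemma without proof (it is presented as a standard companion to Lemma \ref{cont}), so there is no argument of the author's to compare against line by line; I assess yours on its own. The continuity half is correct and complete: a compact $K\subset\Omega\cap X$ is also compact in $\Omega$, and $\rho^{-1}(\mathcal{U}^\ell_{g|_X,K,\veps})\supset\mathcal{U}^\ell_{g,K,\veps}$ because $\|f|_X-g|_X\|_X\leq\|f-g\|_{[a,b]}$ and on $K$ the derivatives of $f|_X$ are those of $f$. The reduction to $n=1$ via the product topology is also fine.

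The surjectivity half uses the right construction --- it is the one-dimensional Whitney extension via Hermite interpolation on the components of $[a,b]\setminus X$ --- and it is complete whenever $[a,b]\setminus X$ has finitely many components, which covers every application in the paper (there $X$ is always semialgebraic, hence a finite union of points and closed intervals). For a general closed $X$, however, the step ``$f$ is $\Cont^\ell$ at boundary points of $X$ in $\Omega$ by the prescribed matching of derivatives'' has a genuine gap: if $x_0\in X\cap\Omega$ is an accumulation point of infinitely many components $I_j$, matching the jets of $g$ at the two endpoints of each $I_j$ separately does not control $f$ or its derivatives on the interior of $I_j$, since the Hermite interpolant on a shrinking interval blows up unless the endpoint jets satisfy Whitney's compatibility estimates. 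Worse, with a purely intrinsic reading of ``$\Cont^\ell$ on $\Omega\cap X$'' (derivatives as limits of difference quotients within $X$) the extension may not exist at all: for $X=\{0\}\cup\{1/n:\ n\geq1\}$, $g(0)=0$ and $g(1/n)=(-1)^n/n^2$, the difference quotient at $0$ within $X$ tends to $0$, yet the mean value theorem forces any $\Cont^1$ extension to take derivative values near $\pm2$ arbitrarily close to $0$. So your argument needs either the standing assumption that $[a,b]\setminus X$ has finitely many components (equivalently, that $X$ is as in the paper's applications), or an interpretation of ``$\Cont^\ell$ on $\Omega\cap X$'' as locally extendably $\Cont^\ell$, or the Whitney estimates that legitimize the gluing at accumulation points. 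You flagged the interface issue, but located it at a single boundary point rather than at accumulation points of infinitely many interfaces, which is where the real difficulty lies.
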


\subsection{Stone-Weierstrass' approximation and Bernstein's polynomials}
The proof of Theorem \ref{smart} provided in \cite{fu} involves Stone-Weierstrass' approximation (controlling the behavior of a large enough number of derivatives). We want to analyze the crucial role that Stone-Weierstrass' approximation plays. There are many constructive results in this direction and we refer the reader to \cite[Ch.7.\S.2]{dl} where estimations of the approximation errors are available. In this article we will use \em Bernstein's polynomials\em, which provided a pioneer constructive proof of Stone-Weierstrass' approximation theorem \cite{b}. We suggest the reader \cite[Ch.10]{dl} and \cite[\S1]{l} for further references. Although Bernstein's polynomials converge slowly to the approximated function, they have shape preserving properties \cite[Ch.10.Thm.3.3]{dl} and a `good local behavior' \cite[Ch.10.(3.3)]{dl}: \em If two continuous functions coincide on a subinterval of their common domain, their Bernstein's polynomials of high degree are very similar in (the compact subsets of) such subinterval\em, even when we compare their high order derivatives (Lemma \ref{comparison}). 

We recall some properties of the celebrated Bernstein's polynomials and we present some improvements in this work to fit our requirements (Theorem \ref{cotasi}). The \em Bernstein's approximation polynomial \em (of degree $\nu$) of a real function $f:[0,1]\to\R$ is
$$
B_\nu(f):=\sum_{k=0}^\nu f\Big(\frac{k}{\nu}\Big)B_{k,\nu}(\x),\quad\text{where $B_{k,\nu}(\x):=\binom{\nu}{k}\x^k(1-\x)^{\nu-k}\quad\text{for $k=0,\ldots,\nu$}.$}
$$

\subsubsection{Basic properties of Bernstein's polynomials}\label{bpbp}
Each Bernstein's basis polynomial $B_{k,\nu}(\x)$ of degree $\nu$ is strictly positive on the interval $(0,1)$. In fact, for each $x\in(0,1)$ the values $\{B_{k,\nu}(x)\}_{k=0}^\nu$ constitute the probability mass function of the binomial distribution $\Bb(\nu,x)$. This means (see \cite[p. 6]{l}):
\begin{itemize}
\item[$\bullet$] $\sum_{k=0}^\nu B_{k,\nu}(x)=1$,
\item[$\bullet$] $\sum_{k=0}^\nu k B_{k,\nu}(x)=\nu x$ as it is the mean of $\Bb(\nu,x)$,
\item[$\bullet$] $\sum_{k=0}^\nu (\nu x-k)^2B_{k,\nu}(x)=\nu x(1-x)$ as it is the variance of $\Bb(\nu,x)$.
\end{itemize}
In addition,
\begin{itemize}
\item[(i)] If $m\leq f\leq M$, then $m\leq B_\nu(f)\leq M$ (see \cite[(2) p. 5]{l}).
\item[(ii)] $B_{k,\nu}(\x)=(1-\x)B_{k,\nu-1}(\x)+\x B_{k-1,\nu-1}(\x)$ (which follows from the properties of binomial numbers). 
\item[(iii)] For each $h\in\R$ denote $\Delta_hf(x):=f(x+h)-f(x)$ and 
$$
\Delta_h^kf(x):=\Delta_h(\Delta_h^{k-1}f(x))=\sum_{j=0}^k(-1)^{k-j}\binom{k}{j}f(x+jh).
$$
If $B_\nu^{(k)}(f)$ denotes {\em the $k^{\text{th}}$ derivative of $B_\nu(f)$}, we have by \cite[\S1.4(2), p.12]{l}
$$
B_\nu^{(k)}(f)(\x)=\frac{\nu!}{(\nu-k)!}\sum_{i=0}^{\nu-k}\Delta_{\frac{1}{\nu}}^kf\Big(\frac{i}{\nu}\Big)B_{i,\nu-k}(\x)
$$
for $k=0,\ldots,\nu$.
\end{itemize}

\begin{remark}\label{ab}
If $f\in\Cont([a,b])$, write $f^*:[0,1]\to\R,\ t\mapsto f(a+t(b-a))$. Observe that $f=f^*(\frac{x-a}{b-a})$ and define $B_\nu^{\bullet}(f):=B_\nu(f^*)(\frac{x-a}{b-a})$ the \em Bernstein's polynomial of $f$ of degree $\nu$ for the interval $[a,b]$\em. The changes one makes in subsequent formulas for the interval $[0,1]$ to obtain the corresponding ones for the interval $[a,b]$ are of the following type: \em the polynomial $\x$ is changed by $\frac{\x-a}{b-a}$, so the polynomial $1-\x$ is changed by $\frac{b-\x}{b-a}$\em. For instance, the polynomial $\x(1-\x)\leadsto\frac{(\x-a)(b-\x)}{(b-a)^2}$ and the polynomial $(1-2\x)\leadsto\frac{((b-a)-2(\x-a))}{b-a}$.\hfill$\sqbullet$
\end{remark}

\subsubsection{Derivatives of Bernstein's polynomials}
One of the most remarkable properties of Bernstein's approximation, which is very useful for our constructions, is that derivatives $B_{\nu}^{(\ell)}(f)$ of $B_\nu(f)$ of each order $\ell$ converge to corresponding derivatives of $f$, see Lorentz \cite{l2}: \em If $f\in\Cont^\ell([0,1])$ for some $\ell\geq0$, then $\lim_{\nu\to\infty}B_{\nu}^{(\ell)}(f)=f^{(\ell)}$ uniformly on the interval $[0,1]$\em. This property can be viewed as a compensation for the `slow' convergence of $B_\nu(f)$ to $f$. If $\|\cdot\|_{[0,1]}$ denotes the maximum norm on $[0,1]$, the error bound
\begin{equation}\label{12}
|B_\nu(f)(x)-f(x)|\leq\frac{1}{2\nu}x(1-x)\|f''\|_{[0,1]}
\end{equation}
provided in \cite[Ch.10, (3.4), p.308]{dl} shows that the rate of convergence is at least $1/\nu$ for $f \in\Cont^2([0,1])$. On the other hand, Voronovskaya's asymptotic formula \cite{vo} (or \cite[\S.1.6.1]{l})
\begin{equation}\label{13}
\lim_{\nu\to\infty}\nu(B_{\nu}(f)(x)-f(x))=\frac{1}{2}x(1-x)f''(x)
\end{equation}
shows that for $x\in(0,1)$ with $f''(x)\neq0$, the asymptotic rate of convergence is precisely $1/\nu$. In \cite{f} it is shown that all derivatives of the operator $B_\nu$ converge at essentially the same rate by extending both the error bound \eqref{12} and Voronovskaya's formula \eqref{13}. The error bound is generalized in \cite{f} to the following:

\begin{thm}[Error bound, {\cite[Thm.1]{f}}]\label{cota1}
If $f\in\Cont^{\ell+2}([0,1])$ for some $\ell\ge0$, then
\begin{equation}\label{cota11}
|B_{\nu}^{(\ell)}(f)(x)-f^{(\ell)}(x)|\leq\frac{1}{2\nu}(\ell(\ell-1)\|f^{(\ell)}\|_{[0,1]}+\ell|1-2x|\|f^{(\ell+1)}\|_{[0,1]}+x(1-x)\|f^{(\ell+2)}\|_{[0,1]})
\end{equation}
for each $x\in[0,1]$.
\end{thm}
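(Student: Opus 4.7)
The plan is to start from the finite-difference formula for $B_\nu^{(\ell)}(f)$ recalled in \ref{bpbp}(iii), match the inner differences against a second-order Taylor expansion of $f^{(\ell)}$ around $x$, and assemble the result using the moment identities of the Bernstein basis $B_{i,\nu-\ell}$. Setting $c_{\nu,\ell}:=\nu!/((\nu-\ell)!\,\nu^\ell)=\prod_{j=0}^{\ell-1}(1-j/\nu)\leq 1$, one has
$$B_\nu^{(\ell)}(f)(x) = c_{\nu,\ell} \sum_{i=0}^{\nu-\ell} \big[\nu^\ell \Delta_{1/\nu}^\ell f(i/\nu)\big]\, B_{i,\nu-\ell}(x),$$
and the Weierstrass product inequality gives $0\leq 1-c_{\nu,\ell}\leq\ell(\ell-1)/(2\nu)$. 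Iterating the fundamental theorem of calculus yields the classical representation $\Delta_{1/\nu}^\ell f(i/\nu)=\int_{[0,1/\nu]^\ell} f^{(\ell)}(i/\nu+u_1+\cdots+u_\ell)\,du$, so expanding $f^{(\ell)}$ around $x$ to second order with Peano remainder in $f^{(\ell+2)}$ and integrating (using $\int_{[0,1/\nu]^\ell}(u_1+\cdots+u_\ell)\,du=\ell/(2\nu^{\ell+1})$) gives
$$\nu^\ell \Delta_{1/\nu}^\ell f(i/\nu) = f^{(\ell)}(x) + \big(i/\nu-x+\ell/(2\nu)\big)\,f^{(\ell+1)}(x) + r_i,$$
where $|r_i|$ is controlled by $\tfrac{1}{2}\|f^{(\ell+2)}\|_{[0,1]}$ times the second moment of $(i/\nu+u_1+\cdots+u_\ell-x)$ over $[0,1/\nu]^\ell$.

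Next, I would sum against $B_{i,\nu-\ell}(x)$ and apply the identities from \ref{bpbp}, namely $\sum_i B_{i,\nu-\ell}(x)=1$ and $\sum_i(i/\nu)B_{i,\nu-\ell}(x)=(1-\ell/\nu)x$, which combine to give
$$\sum_{i=0}^{\nu-\ell}\big(i/\nu-x+\ell/(2\nu)\big)\,B_{i,\nu-\ell}(x)=\frac{\ell(1-2x)}{2\nu}.$$
Taking absolute values and using $c_{\nu,\ell}\leq 1$ and $1-c_{\nu,\ell}\leq\ell(\ell-1)/(2\nu)$ already produces the first two summands in \eqref{cota11}: the contribution $(c_{\nu,\ell}-1)f^{(\ell)}(x)$ is at most $\ell(\ell-1)\|f^{(\ell)}\|_{[0,1]}/(2\nu)$ and the linear Taylor contribution is at most $\ell|1-2x|\|f^{(\ell+1)}\|_{[0,1]}/(2\nu)$.

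The hard part will be to show that the remainder sum $\big|c_{\nu,\ell}\sum_i r_i\,B_{i,\nu-\ell}(x)\big|$ is bounded by the clean quantity $x(1-x)\|f^{(\ell+2)}\|_{[0,1]}/(2\nu)$, without any additional $O(1/\nu^2)$ error. A brute-force second-moment computation of $\sum_i\int_{[0,1/\nu]^\ell}(i/\nu+u_1+\cdots+u_\ell-x)^2\,du\cdot B_{i,\nu-\ell}(x)$ gives $\big(x(1-x)(\nu-\ell-\ell^2)+(\ell+3\ell^2)/12\big)/\nu^2$, which is close to but not equal to $x(1-x)/\nu$. To absorb the parasitic $O(\ell^2/\nu^2)$ terms I would re-center the inner Taylor expansion at the midpoint $i/\nu+\ell/(2\nu)$ (so that the linear contribution in the integration variables vanishes), split $r_i$ into a variance piece (of size $O(\ell/\nu^{\ell+2})\|f^{(\ell+2)}\|$) and a quadratic piece with second moment $(i/\nu+\ell/(2\nu)-x)^2$, and then exploit both the inequality $c_{\nu,\ell}(\nu-\ell-\ell^2)/\nu\leq 1$ and the slack in the Weierstrass bound $1-c_{\nu,\ell}\leq\ell(\ell-1)/(2\nu)$ to reabsorb the residual terms into the clean coefficient $x(1-x)/(2\nu)$. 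This telescoping between the midpoint-centered Taylor remainder and the product slack in $c_{\nu,\ell}$ is the refinement effectively carried out in \cite{f}, and it is what yields the sharp constants in \eqref{cota11}.
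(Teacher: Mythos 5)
Your first two terms are handled correctly: the identity $B_\nu^{(\ell)}(f)=c_{\nu,\ell}\sum_i\nu^\ell\Delta_{1/\nu}^\ell f(i/\nu)\,B_{i,\nu-\ell}$, the bound $0\le 1-c_{\nu,\ell}\le\ell(\ell-1)/(2\nu)$, and the moment computation giving $\sum_i(i/\nu-x+\ell/(2\nu))B_{i,\nu-\ell}(x)=\ell(1-2x)/(2\nu)$ are all right. The gap is exactly where you flag it, and it is not closable along the route you sketch. Your second-moment bound for the remainder is $\tfrac12\|f^{(\ell+2)}\|\bigl(x(1-x)(\nu-\ell-\ell^2)+(\ell+3\ell^2)/12\bigr)/\nu^2$, whose additive piece $(\ell+3\ell^2)/(12\nu^2)$ does \emph{not} vanish as $x\to0$ or $x\to1$, whereas the target $x(1-x)\|f^{(\ell+2)}\|/(2\nu)$ does. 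Already for $\ell=1$, $x=0$ your decomposition yields $|B_\nu'(f)(0)-f'(0)|\le\tfrac{1}{2\nu}|f''(0)|+\tfrac{1}{6\nu^2}\|f''\|$, which exceeds the claimed bound whenever $|f''(0)|=\|f''\|_{[0,1]}$; the surplus cannot be pushed into the other two summands, because their slack is measured in $\|f^{(\ell)}\|$ and $\|f^{(\ell+1)}\|-|f^{(\ell+1)}(x)|$, and there is no way to trade error between norms of different derivatives. The loss is intrinsic to splitting the integral representation of $\Delta_{1/\nu}^\ell f$ into a pointwise Taylor polynomial at $x$ plus a remainder bounded termwise; the sharp constant requires keeping the whole difference as a single mean value.

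Your attribution of the ``telescoping'' fix to \cite{f} is also incorrect: Floater's proof (recalled in Section \ref{s5} of this paper) takes a different route. It starts from the exact divided-difference identity $B_\nu(f)(x)-f(x)=\tfrac{1}{\nu}x(1-x)B_{\nu,1,1}(f)(x)$ (equation \eqref{bff}), differentiates it $\ell$ times by Leibniz to get \eqref{32} --- the coefficients $-\ell(\ell-1)$, $\ell(1-2x)$, $x(1-x)$ arise as the derivatives of $x(1-x)$, not from moment sums --- and then bounds $(B_{\nu,1,1}(f))^{(m)}$ by $\|f^{(m+2)}\|_{[0,1]}/2$ for $m=\ell-2,\ell-1,\ell$ using \eqref{33} together with the mean value theorem for divided differences \eqref{a1}. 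That mean-value step is precisely what replaces your lossy Peano-remainder estimate and produces the clean constants. To repair your argument you would have to apply the mean value theorem for the $(\ell+2)$nd order divided differences globally rather than Taylor-expanding at $x$, at which point you have essentially reconstructed Floater's proof.
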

\begin{remark}\label{derf}
The reader can prove inductively that 
$$
\frac{d^\ell}{dx^\ell}(x(1-x)f''(x))=-\ell(\ell-1)f^{(\ell)}+\ell(1-2x)f^{(\ell+1)}+x(1-x)f^{(\ell+2)}
$$
is the $\ell^{\text{th}}$ derivative of $x(1-x)f''(x)$.\hfill$\sqbullet$
\end{remark}

In addition, Voronovskaya's formula \eqref{13} can be `differentiated' to determine the asymptotic behavior of the error for the high order derivatives of the Bernstein's polynomial:

\begin{thm}[Asymptotic behavior, {\cite[Thm.2]{f}}]\label{cota2}
If $f\in\Cont^{\ell+2}([0,1])$ for some $\ell\ge0$, then
$$
\lim_{\nu\to\infty}\nu(B_{\nu}^{(\ell)}(f)(x)-f^{(\ell)}(x))=\frac{1}{2}\frac{d^\ell}{dx^\ell}(x(1-x)f''(x))
$$
uniformly in the interval $[0,1]$.
\end{thm}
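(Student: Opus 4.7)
The strategy is to reduce the statement to the classical case $\ell=0$ (formula \eqref{13}) via the explicit formula for $B_\nu^{(\ell)}(f)$ recalled in \S\ref{bpbp}(iii). Writing $h:=1/\nu$ and $m:=\nu-\ell$, that formula reads
\[
B_\nu^{(\ell)}(f)(x)=\frac{\nu!}{(\nu-\ell)!}\sum_{i=0}^{m}\Delta_h^{\ell}f\!\Big(\tfrac{i}{\nu}\Big)B_{i,m}(x).
\]
The nodes $i/\nu$ do not coincide with the Bernstein nodes $i/m$ of degree $m$, but they differ only by the affine map $\phi_\ell(t):=t(1-\ell/\nu)$ (so $i/\nu=\phi_\ell(i/m)$); hence the sum above equals $B_m(\Delta_h^\ell f\circ\phi_\ell)(x)$. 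This innocuous reparametrization is what will produce the $(1-2x)$ factor in the final asymptotic.

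The heart of the proof is a two-term Taylor expansion of $\nu^{\ell}\Delta_h^{\ell}f$. Expanding each $f(y+jh)$ around the midpoint $y+\ell h/2$ and using the identities $\sum_{j=0}^{\ell}(-1)^{\ell-j}\binom{\ell}{j}(j-\ell/2)^{k}=0$ for $0\le k<\ell$ and for $k=\ell+1$ (the odd-moment cancellation responsible for the extra accuracy of centered differences), one obtains
\[
\nu^{\ell}\Delta_h^{\ell}f(y)=f^{(\ell)}\!\big(y+\tfrac{\ell}{2\nu}\big)+O\!\big(\|f^{(\ell+2)}\|_{[0,1]}/\nu^{2}\big),
\]
uniformly in $y$. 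Substituting $y=\phi_\ell(t)=t-\ell t/\nu$ and Taylor-expanding $f^{(\ell)}$ in $t$ yields
\[
\nu^{\ell}(\Delta_h^{\ell}f\circ\phi_\ell)(t)=f^{(\ell)}(t)+\frac{\ell(1-2t)}{2\nu}f^{(\ell+1)}(t)+O(1/\nu^{2}),
\]
the appearance of $1-2t$ being inherited from the reparametrization $\phi_\ell$.

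Next apply the Bernstein operator $B_m$ to this expansion and invoke \eqref{13} on the leading piece in the form $B_m(f^{(\ell)})(x)=f^{(\ell)}(x)+\tfrac{x(1-x)}{2m}f^{(\ell+2)}(x)+o(1/\nu)$, together with the elementary asymptotics $\nu!/((\nu-\ell)!\,\nu^{\ell})=1-\binom{\ell}{2}/\nu+O(1/\nu^{2})$. Collecting all $O(1/\nu)$ contributions gives
\[
\nu\big(B_\nu^{(\ell)}(f)(x)-f^{(\ell)}(x)\big)\longrightarrow\tfrac12\!\left[-\ell(\ell-1)f^{(\ell)}(x)+\ell(1-2x)f^{(\ell+1)}(x)+x(1-x)f^{(\ell+2)}(x)\right],
\]
and the bracket equals $\frac{d^{\ell}}{dx^{\ell}}(x(1-x)f''(x))$ by Remark \ref{derf}, which is exactly the claimed identity.

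The main obstacle will be uniformity: one must verify that each pointwise $O(1/\nu^{2})$ remainder survives multiplication by $\nu$ as an $o(1)$ term \emph{uniformly} in $x\in[0,1]$. The hypothesis $f\in\Cont^{\ell+2}([0,1])$ renders $f^{(\ell+2)}$ bounded and uniformly continuous, which handles the interior; for the boundary correction (where the shifted argument $y+\ell h/2$ may escape $[0,1]$) one can extend $f$ to a slightly larger interval keeping its $\Cont^{\ell+2}$-norm controlled, or alternatively invoke Theorem \ref{cota1} as an a priori $O(1/\nu)$ bound that allows one to localize the convergence argument and absorb the boundary contribution.
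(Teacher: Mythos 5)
Your argument is correct, but it follows a genuinely different route from the one the paper relies on. Theorem \ref{cota2} is quoted from Floater's paper, and the paper's own version of this argument (the proof of Theorem \ref{cotasi} in Section \ref{s5}) runs through the divided-difference operators $B_{\nu,s,t}$: the exact identity \eqref{bff}, its $\ell$-fold Leibniz differentiation \eqref{32}, the representation \eqref{33} of $(B_{\nu,1,1}(f))^{(\ell)}$ as a combination of the $B_{\nu,k,\ell-k+2}(f)$, and finally the uniform convergence $B_{\nu,s,t}(f)\to f^{(s+t)}/(s+t)!$. You instead start from the finite-difference formula for $B_\nu^{(\ell)}(f)$ in \S\ref{bpbp}(iii), recognize the sum as $B_{\nu-\ell}$ applied to the rescaled centered difference $\Delta_{1/\nu}^{\ell}f\circ\phi_\ell$, exploit the vanishing of the $(\ell+1)$-st centered moment to get $\nu^{\ell}\Delta_{1/\nu}^{\ell}f(y)=f^{(\ell)}(y+\ell/(2\nu))+O(\nu^{-2})$, and reduce to the $\ell=0$ Voronovskaya formula; the three terms $-\ell(\ell-1)f^{(\ell)}$, $\ell(1-2x)f^{(\ell+1)}$, $x(1-x)f^{(\ell+2)}$ then emerge respectively from the prefactor $\nu!/((\nu-\ell)!\,\nu^{\ell})$, the reparametrization $\phi_\ell$, and the base case. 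I checked the moment identities and the bookkeeping of the $O(1/\nu)$ terms; they are right. Two small remarks: \eqref{13} as displayed is pointwise, so you must invoke the classical \emph{uniform} Voronovskaya theorem for $C^2$ functions (i.e.\ the $\ell=0$ case of the statement itself) — that is standard and legitimate as the base of your reduction; and your closing worry about boundary points is moot, since every evaluation point $y+jh$ with $y=\phi_\ell(t)$, $t\in[0,1]$, $0\le j\le\ell$, as well as the midpoint $y+\ell/(2\nu)$, already lies in $[0,1]$, so no extension of $f$ is needed. Your reduction is arguably more elementary, while the paper's divided-difference machinery is what later generalizes to the compact-subset refinement of Theorem \ref{cotasi}.
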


Thus, the $\ell$th derivative of $B_{\nu}(f)$ converges to $f^{(\ell)}(x)$ at the rate of $1/\nu$ when the $\ell$th derivative of $x(1-x)f''(x)$ is non-zero.

\subsubsection{Control of the derivatives of Bernstein's polynomials on compact subsets.}
In this paper we deal with continuous functions $f:[0,1]\to\R$ that are $\Cont^\ell$ only on an open subset $\Omega\subset[0,1]$ and we need to control the behavior of a large enough number of derivatives of the Bernstein's polynomials of $f$ on a compact subset $K\subset\Omega$. A first attempt is to smooth our function $f$ on $[0,1]\setminus\Omega$ and to make use of Lemma \ref{comparison} together with \cite[Thm.1 \& Thm.2]{f}. To avoid an increase of complexity when smoothing the initial data, we amalgamate in Theorem \ref{cotasi} the quoted results \cite[Thm.1 \& Thm.2]{f} and \cite[Ch.10.\S2-3]{dl} to approach the situation we need. Summarizing we provide a bound for the error of each derivative on the chosen compact set and show how the error behaves asymptotically. To make the presentation of the article more discursive, we postpone the proof of the following result until Section \ref{s5}.

\begin{thm}[Convergence of derivatives on compact subsets]\label{cotasi}
Let $f:[0,1]\to\R$ be a continuous function that is $\Cont^{\ell+4}$ on an open subset $\Omega\subset(0,1)$ for some $\ell\geq0$. Let $K\subset\Omega$ be a compact set. Then there exists a constant $C_{f,K,\ell}>0$ such that
\begin{multline*}
|B_{\nu}(f)^{(\ell)}(x)-f^{(\ell)}(x)|\leq\frac{1}{2\nu}\Big(\ell(\ell-1)\sum_{k=\ell}^{\ell+3}\frac{\|f^{(k)}\|_K}{(k-\ell)!}\\
+\ell|1-2x|\sum_{k=\ell+1}^{\ell+3}\frac{\|f^{(k)}\|_K}{(k-\ell-1)!}+x(1-x)\sum_{k=\ell+2}^{\ell+3}\frac{\|f^{(k)}\|_K}{(k-\ell-2)!}\Big)+\frac{C_{f,K,\ell}}{\nu^2}
\end{multline*}
for each $x\in K$ (error bound). In addition, for each $\veps>0$ there exists a constant $C^*_{f,K,\ell,\veps}>0$ such that
$$
\Big|\nu(B_{\nu}(f)^{(\ell)}(x)-f^{(\ell)}(x))-\frac{1}{2}\frac{d^\ell}{d\x^\ell}(x(1-x)f''(x))\Big|<\veps+\frac{C^*_{f,K,\ell,\veps}}{\nu}
$$
for each $x\in K$ and $\nu>\ell$ (control of the asymptotic behavior).
\end{thm}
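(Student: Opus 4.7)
The plan is to adapt Freud's proofs of Theorems~\ref{cota1} and~\ref{cota2}, which require global $\Cont^{\ell+2}$ smoothness, to the localized setting of a compact $K\subset\Omega$, exploiting the fact that for $x\in K$ the Bernstein polynomial $B_\nu(f)(x)$ is concentrated (via the binomial distribution) on values $f(i/\nu)$ with $i/\nu$ close to $x$, which lie in $\Omega$ for $\nu$ sufficiently large. Set $d:=\dist(K,[0,1]\setminus\Omega)>0$, fix $0<\delta<d/3$, and start from the formula
$$
B_\nu^{(\ell)}(f)(x)=\frac{\nu!}{(\nu-\ell)!}\sum_{i=0}^{\nu-\ell}\Delta_{1/\nu}^\ell f(i/\nu)\,B_{i,\nu-\ell}(x)
$$
of \S\ref{bpbp}(iii). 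The sum splits naturally into a \emph{near} part (indices with $|i/\nu-x|\leq\delta$) and a \emph{far} part (the remaining indices); for $\nu$ large, every one of the points $i/\nu,(i+1)/\nu,\ldots,(i+\ell)/\nu$ entering the near part lies in $\Omega$, so $f$ is $\Cont^{\ell+4}$ there.

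For the far part, Hoeffding's concentration inequality gives $\sum_{|i/\nu-x|>\delta}B_{i,\nu-\ell}(x)\leq C_1 e^{-c\nu}$ uniformly for $x\in K$. Combined with the trivial bound $|\Delta_{1/\nu}^\ell f(i/\nu)|\leq 2^\ell\|f\|_{[0,1]}$ (which uses only the continuity of $f$) and $\frac{\nu!}{(\nu-\ell)!}\leq\nu^\ell$, the far part contributes a super-polynomially small quantity, easily absorbed in $C_{f,K,\ell}/\nu^2$. For the near part, the strategy is to Taylor-expand $\Delta_{1/\nu}^\ell f(i/\nu)$ to order $\ell+3$ about $i/\nu$; the combinatorial identities $\sum_{j=0}^\ell(-1)^{\ell-j}\binom{\ell}{j}j^m=0$ for $m<\ell$ and $=\ell!$ for $m=\ell$, together with the identification in Remark~\ref{derf} of the successive derivatives of $x(1-x)f''(x)$, collapse the polynomial part of the expansion precisely to the claimed main term, with the derivatives evaluated at points of $K$ (hence bounded by $\|f^{(k)}\|_K$) and the stated coefficients $1/(k-\ell)!$, $1/(k-\ell-1)!$, $1/(k-\ell-2)!$ arising from the Taylor denominators. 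The $(\ell+4)$-th order Taylor remainder, summed against $B_{i,\nu-\ell}(x)$ and multiplied by the prefactor $\frac{\nu!}{(\nu-\ell)!}$, contributes $O(1/\nu^2)$, yielding the error bound.

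The asymptotic formula then follows from the identical decomposition: by Remark~\ref{derf} the leading coefficient is exactly $\frac{1}{2\nu}\frac{d^\ell}{d\x^\ell}(x(1-x)f''(x))$, while the $\veps$ absorbs the uniform remainder of the Taylor expansion on $K$ and $C^*_{f,K,\ell,\veps}/\nu$ controls the next-order term, reproducing Freud's Theorem~\ref{cota2} in localized form. The main obstacle I foresee is the book-keeping for the combinatorial Stirling-type identities needed in the Taylor expansion of $\Delta_{1/\nu}^\ell f$: the triple structure $\ell(\ell-1)$, $\ell|1-2x|$, $x(1-x)$ of the bound must emerge from the three low-order moments of the binomial distribution listed in \S\ref{bpbp} combined with the Leibniz-type expansion of Remark~\ref{derf}, and the precise factorials $1/(k-\ell)!$ etc.\ have to be matched against the Taylor coefficients. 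This is a delicate calculation that mirrors and extends Freud's original argument for Theorem~\ref{cota1}, with the additional subtlety that $f$ is merely continuous outside $\Omega$, so all contributions from that region must be handled exclusively by the binomial concentration argument above.
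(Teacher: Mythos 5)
Your route is sound but genuinely different from the paper's. The paper never touches the finite-difference representation: it replaces $f$ by its Taylor polynomial $P_x:=T^{\ell+3}f(x,\cdot)$ centred at $x\in K$ (a globally defined polynomial), applies the known global results (Theorems \ref{cota1} and \ref{cota2}) to $P_x$ as black boxes, and reduces everything to showing $|B_\nu^{(\ell)}(f-P_x)(x)|\leq C_{f,K,\ell}/\nu^2$; the latter holds because $f-P_x$ vanishes to order $\ell+4$ at $x$, using the explicit formula for $\frac{d^\ell}{d\x^\ell}(\x^k(1-\x)^{\nu-k})$ and polynomial moment bounds for the binomial distribution (no exponential concentration is needed). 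This is also why the stated error bound carries the sums $\sum_{k=\ell}^{\ell+3}\|f^{(k)}\|_K/(k-\ell)!$: they are precisely the bounds for $\|P_x^{(m)}\|_{[0,1]}$ fed into Theorem \ref{cota1}. Your near/far decomposition, with Hoeffding for the far part and a direct Taylor/moment computation for the near part, would — if carried out — give the slightly sharper bound with the single terms $\|f^{(\ell)}\|_K$, $\|f^{(\ell+1)}\|_K$, $\|f^{(\ell+2)}\|_K$ in place of those sums, and a genuine $O(1/\nu)$ error in the Voronovskaya formula with no $\veps$; both a fortiori imply the stated inequalities. What the paper's method buys is that the entire Voronovskaya-type computation is inherited from the quoted results and never redone; what yours buys is a self-contained and marginally stronger estimate.

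Two caveats. First, the crux of your argument is deferred rather than executed, and it contains a layer you do not mention: after expanding $\Delta^\ell_{1/\nu}f(i/\nu)$ about $i/\nu$ via the Stirling-type identities (which yields the prefactor $\frac{\nu!}{(\nu-\ell)!\,\nu^\ell}=1-\frac{\ell(\ell-1)}{2\nu}+O(\nu^{-2})$ and the correction $\frac{\ell}{2\nu}f^{(\ell+1)}(i/\nu)$), you are left with sums $\sum_i f^{(m)}(i/\nu)B_{i,\nu-\ell}(x)$ in which the nodes $i/\nu$ do not match the binomial parameter $\nu-\ell$ of the weights. A second Taylor expansion about $x$, together with the first and second central moments of $\Bb(\nu-\ell,x)$ and the node shift $i/\nu-i/(\nu-\ell)=-\frac{\ell}{\nu}\cdot\frac{i}{\nu-\ell}$, is what actually produces the $x(1-x)f^{(\ell+2)}(x)$ term and the $-\ell x$ half of $\frac{\ell}{2}(1-2x)f^{(\ell+1)}(x)$; the three moments of \S\ref{bpbp} alone do not suffice without this step. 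Second, the derivatives appearing in the intermediate estimates are evaluated at points $i/\nu$ near but not necessarily in $K$, so the auxiliary constants must be taken over a slightly larger compact subset of $\Omega$ (harmless for $C_{f,K,\ell}$), and the main term must be re-centred at $x\in K$ before the norms $\|f^{(k)}\|_K$ of the statement can legitimately be quoted.
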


\subsection{Polynomial approximation combined with interpolation}

We adapt \cite{bo} to prove the following result that combines Bernstein's polynomial approximation (controlling the behavior of a large enough number of derivatives on a compact subset) with interpolation on a finite set. We include full details for the sake of completeness.

\begin{lem}\label{swdp}
Let $[a,b]\subset\R$ and $\Omega\subset[a,b]$ an open set. Let $a<t_1<\cdots<t_r<b$ be real numbers such that each $t_i\in\Omega$ and $f:[a,b]\to\R$ a $\Cont^{\ell+4}_{\Omega}$-function for some $\ell\geq0$. Fix $\veps>0$ and let $K\subset\Omega$ be a compact set. Then there exists a polynomial $g\in\R[\t]$ such that: 
\begin{itemize}
\item[(i)] $\|f-g\|_{[a,b]}<\veps$.
\item[(ii)] $\|f^{(k)}-g^{(k)}\|_{K}<\veps$ for $k=1,\ldots,\ell$.
\item[(iii)] $g^{(k)}(t_i)=f^{(k)}(t_i)$ for $i=1,\ldots,r$ and $k=0,\ldots,\ell$. 
\end{itemize}
\end{lem}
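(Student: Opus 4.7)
\textbf{Proof plan for Lemma \ref{swdp}.} The idea is standard: produce a Bernstein approximation to $f$ that is good in all the required topologies, then add a small Hermite correction polynomial that absorbs the (necessarily small) errors at the interpolation data. The key point that makes the two ingredients compatible is that Hermite interpolation, being a linear map into a fixed finite-dimensional space of polynomials, is automatically continuous from the data in $\R^{r(\ell+1)}$ to the polynomial in any $\Cont^\ell$-seminorm on $[a,b]$.

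\emph{Step 1: The Hermite correction operator.} The space $\Pp_N$ of polynomials in $\R[\t]$ of degree $<N:=r(\ell+1)$ has dimension $N$, and the evaluation map
$$
\Phi:\Pp_N\to\R^N,\ q\mapsto (q^{(k)}(t_i))_{0\leq k\leq\ell,\,1\leq i\leq r},
$$
is a linear isomorphism (this is the classical Hermite interpolation theorem). In particular, $H:=\Phi^{-1}:\R^N\to\Pp_N$ is a continuous linear map. Since $\Pp_N$ endowed with the norm $\|q\|^*:=\|q\|_{[a,b]}+\sum_{k=1}^\ell\|q^{(k)}\|_{[a,b]}$ is finite-dimensional, there exists a constant $M>0$ (depending only on $[a,b]$, $r$, $\ell$ and $t_1,\ldots,t_r$) such that
$$
\|H(c)\|^*\leq M\max_{k,i}|c_{k,i}|\qquad\text{for every }c=(c_{k,i})\in\R^N.
$$

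\emph{Step 2: Bernstein approximation on an enlarged compact set.} Put $K':=K\cup\{t_1,\ldots,t_r\}$, still a compact subset of $\Omega$. By Theorem \ref{cotasi} (used on the interval $[a,b]$ via the affine reparameterization of Remark \ref{ab}) applied to $f$ on the compact set $K'$, together with the classical uniform convergence $B_\nu^\bullet(f)\to f$ on $[a,b]$ for the continuous function $f$, we can choose $\nu$ so large that the Bernstein polynomial $h:=B_\nu^\bullet(f)$ of $f$ on $[a,b]$ satisfies
$$
\|f-h\|_{[a,b]}<\frac{\veps}{2},\qquad \|f^{(k)}-h^{(k)}\|_{K'}<\frac{\veps}{2(M+1)}\quad\text{for }k=1,\ldots,\ell.
$$
The hypothesis $f\in\Cont^{\ell+4}_\Omega([a,b])$ is exactly what is needed so that Theorem \ref{cotasi} applies, since it requires $\ell+4$ continuous derivatives on a neighborhood of $K'$.

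\emph{Step 3: Correction and conclusion.} Define the errors $c_{k,i}:=f^{(k)}(t_i)-h^{(k)}(t_i)$, so $\max_{k,i}|c_{k,i}|<\veps/(2(M+1))$. Let $q:=H(c)\in\Pp_N$ be the Hermite interpolant of the errors. By Step 1,
$$
\|q\|_{[a,b]}+\sum_{k=1}^\ell\|q^{(k)}\|_{[a,b]}\leq M\cdot\frac{\veps}{2(M+1)}<\frac{\veps}{2}.
$$
Set $g:=h+q\in\R[\t]$. For each $i=1,\ldots,r$ and $k=0,\ldots,\ell$,
$$
g^{(k)}(t_i)=h^{(k)}(t_i)+q^{(k)}(t_i)=h^{(k)}(t_i)+c_{k,i}=f^{(k)}(t_i),
$$
which gives (iii). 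A triangle inequality yields $\|f-g\|_{[a,b]}\leq\|f-h\|_{[a,b]}+\|q\|_{[a,b]}<\veps/2+\veps/2=\veps$, which is (i); and for $k=1,\ldots,\ell$,
$$
\|f^{(k)}-g^{(k)}\|_K\leq\|f^{(k)}-h^{(k)}\|_K+\|q^{(k)}\|_{[a,b]}<\frac{\veps}{2(M+1)}+\frac{\veps}{2}<\veps,
$$
which is (ii). The only delicate point in the entire argument is the combined derivative control of Step 2, and this has been absorbed into the statement of Theorem \ref{cotasi}; every other step is a routine use of finite-dimensional linear algebra and the triangle inequality.
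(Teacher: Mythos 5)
Your proof is correct and follows essentially the same route as the paper: approximate $f$ by a Bernstein polynomial controlled on $K':=K\cup\{t_1,\ldots,t_r\}$ via Theorem \ref{cotasi}, then add a small Hermite correction from the fixed space of polynomials of degree $<r(\ell+1)$. The only difference is presentational: where you invoke finite-dimensionality to get the continuity constant $M$ for the abstract interpolation operator $H$, the paper constructs the dual-basis polynomials $P_{ik}$ explicitly (which it later exploits for degree estimates in \S\ref{bound}), but the decomposition and the estimates are the same.
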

\begin{proof}
Take polynomials $P_{ik}$ such that 
$$
P_{ik}^{(m)}(t_j)=\begin{cases}
0&\text{if $i\neq j$ or $k\ne m$},\\
1&\text{if $i=j$ and $k=m$,}
\end{cases}
$$ 
for $i=1,\ldots,r$ and $0\leq k,m\leq\ell$. For instance, {\em we may choose
\begin{equation}\label{poly}
P_{ik}:=c_{ik}(\t-t_i)^k\prod_{j\neq i}((\t-t_i)^{\ell+1}-(t_j-t_i)^{\ell+1})^{\ell+1}
\end{equation} 
where $c_{ik}:=\frac{1}{k!}\frac{(-1)^{(\ell+1)(r-1)}}{\prod_{j\neq i}(t_j-t_i)^{(\ell+1)^2}}$.} 

The Taylor expansion of $P_{ik}$ at $t_i$ has the form
$$
P_{ik}=\frac{1}{k!}(\t-t_i)^k+d_{ik}(\t-t_i)^{\ell+1}+\cdots
$$
for some $d_{ik}\in\R$, whereas the Taylor expansion of $P_{ik}$ at $t_j$ (for $j\neq i$) has the form
\begin{multline*}
P_{ik}=e_{ijk}(\t-t_j)^{\ell+1}+\cdots\\
\text{where } e_{ijk}:=c_{ik}(t_j-t_i)^k((\ell+1)(t_j-t_i)^{\ell})^{\ell+1}\prod_{\lambda\neq i,j}((t_j-t_i)^{\ell+1}-(t_\lambda-t_i)^{\ell+1})^{\ell+1}.
\end{multline*}
In both cases above the symbol $+\cdots$ means `plus terms of higher degree' with respect to either $\t-t_i$ or $\t-t_j$ depending on each case. To compute $e_{ijk}$ it is enough to figure out the first non-zero monomial of the Taylor expansion at $t_j$ of each factor of the product $P_{ik}$ and then to multiply them.

Define $K':=K\cup\{t_1,\ldots,t_r\}$ and
\begin{align}
\label{boundm}M&:=\max\{\|P_{ik}\|_{[a,b]},\|P_{ik}^{(m)}\|_{K'}:\ 1\leq i\leq r,\ 0\leq k\leq\ell,\ 1\leq m\leq\ell\},\\
\label{bounddelta}\delta&:=\frac{\veps}{1+r(\ell+1)M}.
\end{align}
By Theorem \ref{cotasi} there exists a Bernstein's polynomial $h\in\R[\t]$ of $f$ (in the interval $[a,b]$) such that $\|h-f\|_{[a,b]}<\delta$ and $\|h^{(k)}-f^{(k)}\|_{K'}<\delta$ for $k=1,\ldots,\ell$. Define
$$
g:=h+\sum_{i=1}^r\sum_{k=0}^{\ell}b_{ik}P_{ik}
$$
where $b_{ik}:=f^{(k)}(t_i)-h^{(k)}(t_i)$ for $i=1,\ldots,r$ and $k=0,\ldots,\ell$. Thus, 
$$
g^{(m)}(t_j)=h^{(m)}(t_j)+\sum_{i=1}^r\sum_{k=0}^{\ell}b_{ik}P_{ik}^{(m)}(t_j)=h^{(m)}(t_j)+b_{jm}=f^{(m)}(t_j)
$$
for $j=1,\ldots,r$ and $m=0,\ldots,\ell$.

As $|b_{ik}|=|f^{(k)}(t_i)-h^{(k)}(t_i)|<\delta$ for $i=1,\ldots,r$ and $k=0,\ldots,\ell$, we have
\begin{align*}
\|g-f\|_{[a,b]}&\leq\|h-f\|_{[a,b]}+\sum_{i=1}^r\sum_{k=0}^{\ell}|b_{ik}|\|P_{ik}\|_{[a,b]}<\delta+r(\ell+1)M\delta=\veps,\\
\|g^{(m)}-f^{(m)}\|_{K}&\leq\|h^{(m)}-f^{(m)}\|_{K}+\sum_{i=1}^r\sum_{k=0}^{\ell}|b_{ik}|\|P_{ik}^{(m)}\|_{K}<\delta+r(\ell+1)M\delta=\veps 
\end{align*}
for each $m=1,\ldots,\ell$, as required.
\end{proof}

\begin{remark}\label{light}
In the previous result we have chosen the same number $\ell$ of known derivatives for all the values $t_i$ in order to simplify the presentation, but it is possible to choose different numbers of known derivatives for each value $t_i$. The proof is quite similar but the notation is more intricate and the concrete details more cumbersome.\hfill$\sqbullet$
\end{remark}

The proof of Main Theorem \ref{nashsmart} still requires some preliminary work that we approach next.

\subsection{Polynomial paths with prescribed behavior at points and intervals}
We prove next (as a consequence of Lemma \ref{swdp}) a key result to prove Main Theorem \ref{nashsmart}. When we write a series in the form $h:=a_k\t^k+\cdots$, we mean that the lowest order term is $a_k\t^k$ (with $a_k\neq0$) and the remaining terms have higher order and are not relevant for our computation. Recall that $\R[\x]:=\R[\x_1,\ldots,\x_n]$.

\begin{lem}\label{clue}
Let $\Ss_0,\ldots,\Ss_r\subset\R^n$ be connected open semialgebraic sets (non-necessarily pairwise different) and pick points $x_i\in\cl(\Ss_{i-1})\cap\cl(\Ss_i)$ for $i=1,\ldots,r$. Assume that there exist a continuous path $\beta:[a,b]\to\bigcup_{k=0}^r\Ss_k\cup\{x_1,\ldots,x_r\}$ and values $a:=t_0<t_1<\cdots<t_r<t_{r+1}:=b$ satisfying the following properties:
\begin{itemize}
\item[(i)] $\beta([t_0,t_1))\subset\Ss_0$, $\beta((t_k,t_{k+1}))\subset\Ss_k$ for $k=1,\ldots,r-1$ and $\beta((t_r,t_{r+1}])\subset\Ss_r$,
\item[(ii)] $\beta(t_i)=x_i$ and $\beta$ is an analytic path on a neighborhood of $t_i$ for $i=1,\ldots,r$,
\item[(iii)] there exist polynomials $f_{ij}\in\R[\x]$ such that $\{f_{i1}>0,\ldots,f_{is}>0\}\subset\Ss_{i-1}$ is adherent to $x_i$ and the analytic series $(f_{ij}\circ\beta)(t_i-\t)=a_{ij}\t^{n_{ij}}+\cdots$ satisfies $a_{ij}>0$,
\item[(iv)] there exist polynomials $g_{ij}\in\R[\x]$ such that $\{g_{i1}>0,\ldots,g_{is}>0\}\subset\Ss_i$ is adherent to $x_i$ and the analytic series $(g_{ij}\circ\beta)(t_i+\t)=b_{ij}\t^{p_{ij}}+\cdots$ satisfies $b_{ij}>0$,
\end{itemize}
Let $\ell:=\max\{n_{ij},p_{ij}: 1\leq i\leq r,\ 1\leq j\leq s\}$ and $\Omega\subset[a,b]$ be an open neighborhood of $\{t_1,\ldots,t_r\}$ such that $\beta|_\Omega$ is analytic. 
\begin{itemize}
\item[(1)] There exists an open neighborhood ${\mathcal U}$ of $\beta\in\Cont^{\ell+4}_{\Omega}([a,b])$ in the $\Cont^{\ell}_{\Omega}$-topology such that if $\alpha\in{\mathcal U}$ and $\alpha^{(m)}(t_i)=\beta^{(m)}(t_i)$ for each $i=1,\ldots,r$ and each $m=0,\ldots,\ell$, then $\alpha((t_k,t_{k+1}))\subset\Ss_k$ for $k=0,\ldots,r$. 
\item[(2)] There exists a polynomial path $\alpha:[a,b]\to\bigcup_{k=0}^r\Ss_k\cup\{x_1,\ldots,x_r\}$ close to $\beta$ in the $\Cont^{\ell}_{\Omega}$-topology such that $\alpha(t_i)=x_i$ for $i=1,\ldots,r$ and $\alpha((t_k,t_{k+1}))\subset\Ss_k$ for $k=0,\ldots,r$.
\end{itemize}
\end{lem}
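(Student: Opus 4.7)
The strategy is to first establish (1), then derive (2) by applying Lemma~\ref{swdp} component-wise to $\beta=(\beta_1,\ldots,\beta_n)$. Since $\beta|_\Omega$ is analytic it is in particular $\Cont^{\ell+4}_\Omega$, so Lemma~\ref{swdp} produces, for any compact $K\subset\Omega$ and any prescribed $\veps>0$, a polynomial path $\alpha$ as close as desired to $\beta$ in the $\Cont^\ell_\Omega$ topology and satisfying $\alpha^{(m)}(t_i)=\beta^{(m)}(t_i)$ for $i=1,\ldots,r$ and $m=0,\ldots,\ell$. Choosing $\alpha$ inside the open neighborhood ${\mathcal U}$ produced by (1) automatically gives the polynomial path required by (2), with $\alpha(t_i)=\beta(t_i)=x_i$ and $\alpha((t_k,t_{k+1}))\subset\Ss_k$.

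For (1), fix radii $\delta_i>0$ small enough that the closed intervals $K_i:=[t_i-\delta_i,t_i+\delta_i]$ are pairwise disjoint, contained in $\Omega\cap[a,b]$, and satisfy
$$
f_{ij}(\beta(t_i-\tau))\geq\tfrac{a_{ij}}{2}\tau^{n_{ij}},\qquad g_{ij}(\beta(t_i+\tau))\geq\tfrac{b_{ij}}{2}\tau^{p_{ij}}
$$
for every $\tau\in(0,\delta_i]$ and every $j$; this is possible by the positivity of the leading coefficients in the hypothesized analytic expansions. Set $J_0:=[a,t_1-\delta_1]$, $J_k:=[t_k+\delta_k,t_{k+1}-\delta_{k+1}]$ for $1\leq k\leq r-1$, and $J_r:=[t_r+\delta_r,b]$. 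On each $J_k$ the image $\beta(J_k)$ is a compact subset of the open set $\Ss_k$, so uniform $\Cont^0$-closeness alone guarantees $\alpha(J_k)\subset\Ss_k$ once ${\mathcal U}$ is small enough.

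The main obstacle is to control $\alpha$ on the intervals $K_i$. Fix $i,j$ and set $h_\alpha:=f_{ij}\circ\alpha$, $h_\beta:=f_{ij}\circ\beta$. The matching condition $\alpha^{(m)}(t_i)=\beta^{(m)}(t_i)$ for $m=0,\ldots,\ell$ combined with Fa\`a di Bruno's formula yields $h_\alpha^{(m)}(t_i)=h_\beta^{(m)}(t_i)$ for the same range. Since all these derivatives of $h_\alpha-h_\beta$ vanish at $t_i$, iterated integration from $t_i$ gives
$$
h_\alpha(t)-h_\beta(t)=\int_{t_i}^{t}\frac{(t-u)^{\ell-1}}{(\ell-1)!}\bigl(h_\alpha^{(\ell)}(u)-h_\beta^{(\ell)}(u)\bigr)\,du,
$$
so $|h_\alpha(t)-h_\beta(t)|\leq\frac{|t-t_i|^\ell}{\ell!}\|h_\alpha^{(\ell)}-h_\beta^{(\ell)}\|_{K_i}$. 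Since $\ell\geq n_{ij}$ by the definition of $\ell$, for $t\in(t_i-\delta_i,t_i)$ we obtain
$$
h_\alpha(t)\geq\Bigl(\tfrac{a_{ij}}{2}-\tfrac{\delta_i^{\ell-n_{ij}}}{\ell!}\|h_\alpha^{(\ell)}-h_\beta^{(\ell)}\|_{K_i}\Bigr)(t_i-t)^{n_{ij}}.
$$
By Lemma~\ref{cont} the map $\alpha\mapsto h_\alpha$ is continuous for the $\Cont^\ell_\Omega$ topology, so shrinking ${\mathcal U}$ keeps $\|h_\alpha^{(\ell)}-h_\beta^{(\ell)}\|_{K_i}$ small enough that the bracket is positive, forcing $h_\alpha>0$ on $(t_i-\delta_i,t_i)$ and placing $\alpha(t)$ in the basic open subset $\{f_{i1}>0,\ldots,f_{is}>0\}\subset\Ss_{i-1}$. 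The symmetric argument with the $g_{ij}$'s handles $(t_i,t_i+\delta_i)\subset\Ss_i$, and combining with the previous paragraph completes~(1).
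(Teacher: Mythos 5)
Your proposal is correct and follows the same overall skeleton as the paper's proof: split $[a,b]$ into compact intervals $J_k$ away from the $t_i$ (where plain $\Cont^0$-closeness plus compactness of $\beta(J_k)\subset\Ss_k$ suffices) and small intervals around each $t_i$ (where the matching of jets up to order $\ell$ is exploited), and then obtain (2) from (1) via Lemma \ref{swdp} applied componentwise. The one step where you genuinely diverge is the local positivity argument on $[t_i-\delta_i,t_i)$. The paper shows that $(f_{ij}\circ\alpha)^{(m)}(t_i)=0$ for $m<n_{ij}$, that $(-1)^{n_{ij}}(f_{ij}\circ\alpha)^{(n_{ij})}$ stays positive on the whole interval $I_i$ (by $\Cont^{n_{ij}}$-closeness of the composites), and then rules out a zero of $f_{ij}\circ\alpha$ by an iterated Rolle argument. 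You instead use the integral form of the Taylor remainder for $F:=f_{ij}\circ\alpha-f_{ij}\circ\beta$ (all of whose derivatives up to order $\ell-1$ vanish at $t_i$ by Fa\`a di Bruno) to get $|F(t)|\leq\frac{|t-t_i|^\ell}{\ell!}\|F^{(\ell)}\|_{K_i}$, and compare this against the explicit lower bound $f_{ij}(\beta(t_i-\tau))\geq\frac{a_{ij}}{2}\tau^{n_{ij}}$; since $\ell\geq n_{ij}$ the error is absorbed. Both arguments are valid and rest on the same continuity statement (Lemma \ref{cont}); yours is shorter and more quantitative. The only thing the paper's version buys that yours does not is the observation (exploited in Remark \ref{sharpr}(i) for the degree estimates in the PL case) that one only needs to control the single derivative of order $n_{ij}$ of $f_{ij}\circ\alpha$, whereas your estimate requires control of the top derivative of order $\ell$ on $K_i$ — harmless for the lemma as stated, but slightly more demanding.
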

\begin{proof}
We prove this result as an application of Lemma \ref{swdp}. Observe that $(-1)^{n_{ij}}(f_{ij}\circ\beta)^{(n_{ij})}(t_i)>0$ and $(g_{ij}\circ\beta)^{(p_{ij})}(t_i)>0$ for each pair $i,j$. Thus, there exists $\delta>0$ such that for the compact interval $I_i:=[t_i-\delta,t_i+\delta]\subset\Omega$, $(-1)^{n_{ij}}(f_{ij}\circ\beta|_{I_i})^{(n_{ij})}>0$ and $(g_{ij}\circ\beta|_{I_i})^{(p_{ij})}>0$ for $i=1,\ldots,r$ and $j=1,\ldots,s$. Denote $J_0:=[a,t_1-\delta]$, $J_k:=[t_k+\delta,t_{k+1}-\delta]$ for $k=1,\ldots,r-1$ and $J_r:=[t_r+\delta,b]$. By Lemmas \ref{cont} and \ref{cont2} the maps
\begin{align*}
&\varphi_{ij}:\Cont^{\ell+4}_{\Omega}([a,b],\R^n)\to\Cont^{\ell+4}(I_i,\R), \gamma\mapsto f_{ij}\circ\gamma|_{I_i},\\
&\phi_{ij}:\Cont^{\ell+4}_{\Omega}([a,b],\R^n)\to\Cont^{\ell+4}(I_i,\R), \gamma\mapsto g_{ij}\circ\gamma|_{I_i},\\
&\psi_k:\Cont^{\ell+4}_{\Omega}([a,b],\R^n)\to\Cont^0(J_k,\R),\ \gamma\mapsto\dist(\gamma|_{J_k}(\t),\R^n\setminus\Ss_k)
\end{align*}
are continuous. In addition, as $\beta(J_k)\subset\Ss_k$, each function $\psi_k(\beta)$ is strictly positive for $k=0,\ldots,r$. Define 
$$
\veps:=\min_{i,j,k}\{\min\{(-1)^{n_{ij}}(f_{ij}\circ\beta|_{I_i})^{(n_{ij})}\},\min\{(g_{ij}\circ\beta|_{I_i})^{(p_{ij})}\},\min\{\psi_k(\beta)\}\}>0
$$ 
and consider
\begin{equation*}
\begin{split}
{\mathcal U}_0&:=\bigcap_{i=1}^r\bigcap_{j=1}^s\{\gamma\in\Cont^{\ell+4}_{\Omega}([a,b],\R^n):\ \|\varphi_{ij}(\gamma)^{(n_{ij})}-\varphi_{ij}(\beta)^{(n_{ij})}\|_{I_i}<\veps\}\\
&\cap\bigcap_{i=1}^r\bigcap_{j=1}^s\{\gamma\in\Cont^{\ell+4}_{\Omega}([a,b],\R^n):\ \|\phi_{ij}(\gamma)^{(p_{ij})}-\phi_{ij}(\beta)^{(p_{ij})}\|_{I_i}<\veps\}\\ 
&\cap\bigcap_{k=0}^r\{\gamma\in\Cont^{\ell+4}_{\Omega}([a,b],\R^n):\ \|\psi_k(\gamma)-\psi_k(\beta)\|_{J_k}<\veps\},
\end{split}
\end{equation*}
which is an open subset of $\Cont^{\ell+4}_{\Omega}([a,b],\R^n)$ in the $\Cont^{\ell}_{\Omega}$-topology. Consider the compact set $K:=\bigcup_{i=1}^rI_i\subset\Omega$. There exists $\rho>0$ such that 
$$
{\mathcal U}:=\{\gamma\in\Cont_{\Omega}^{\ell+4}([a,b],\R^n):\ \|\gamma-\beta\|_{[a,b]}<\rho,\ \|\gamma^{(m)}-\beta^{(m)}\|_{K}<\rho,\ m=1,\ldots,\ell\}\subset{\mathcal U}_0.
$$

We are ready to prove the assertions in the statement:

(1) We claim: \em If $\alpha\in{\mathcal U}$ and $\alpha^{(m)}(t_i)=\beta^{(m)}(t_i)$ for $i=1,\ldots,r$ and $m=0,\ldots,\ell$, then $\alpha((t_k,t_{k+1}))\subset\Ss_k$ for $k=0,\ldots,r$\em.

It holds $\alpha(J_k)\subset\Ss_k$ for $k=0,\ldots,r$, because $\alpha\in\{\gamma\in\Cont^{\ell+4}_\Omega([a,b]):\ |\psi_k(\gamma|_{J_k})-\psi_k(\beta|_{J_k})|<\veps\}$ for $k=0,\ldots,r$. Thus, to prove the claim it is enough to check:
\begin{align}
\label{2161}&\alpha([t_i-\delta,t_i))\subset\{f_{i1}>0,\ldots,f_{is}>0\}\subset\Ss_{i-1},\\
\label{2162}&\alpha((t_i,t_i+\delta])\subset\{g_{i1}>0,\ldots,g_{is}>0\}\subset\Ss_i
\end{align}
for $i=1,\ldots,r$. We show only \eqref{2161} because the proof of \eqref{2162} is analogous.

Using Taylor's expansion, we know that $\alpha$ around $t_i$ has the form
$$
\alpha(\t)=\sum_{m=0}^\ell\frac{1}{m!}\alpha^{(m)}(t_i)(\t-t_i)^m+(\t-t_i)^{\ell+1}\eta(\t-t_i)=\sum_{m=0}^\ell\frac{1}{m!}\beta^{(m)}(t_i)(\t-t_i)^m+(\t-t_i)^{\ell+1}\eta(\t-t_i)
$$
where $\eta$ is a continuous map defined on an interval around $0$ (recall that $\alpha\in\Cont_{\Omega}^{\ell+4}([a,b],\R^n$). As $\beta$ is analytic in a neighborhood of $t_i$, there exists a tuple of analytic series $\tau\in\R\{\t\}^n$ such that
$$
\beta(\t)=\sum_{m=0}^\ell\frac{1}{m!}\beta^{(m)}(t_i)(\t-t_i)^m+(\t-t_i)^{\ell+1}\tau(\t-t_i).
$$
Thus, if $\zeta:=\eta-\tau$, which is a continuous function around $0$, we deduce 
$$
\alpha(\t)-\beta(\t)=(\t-t_i)^{\ell+1}\zeta(\t-t_i)\quad\leadsto\quad\alpha(t_i-\t)-\beta(t_i-\t)=(-\t)^{\ell+1}\zeta(-\t).
$$
Recall that $\x:=(\x_1,\ldots,\x_n)$, write $\y:=(\y_1,\ldots,\y_n)$ and let $\z$ be a single variable. As the polynomial $f_{ij}(\x+\z\y)-f_{ij}(\x)$ vanishes on the real algebraic set $\{\z=0\}$, there exists a polynomial $F_{ij}\in\R[\x,\y,\z]$ such that
$$
f_{ij}(\x+\z\y)=f_{ij}(\x)+\z F_{ij}(\x,\y,\z).
$$
As $\ell\geq n_{ij}$, we deduce
\begin{multline*}
f_{ij}(\alpha(t_i-\t))=f_{ij}(\beta(t_i-\t)+\alpha(t_i-\t)-\beta(t_i-\t))=f_{ij}(\beta(t_i-\t)+(-\t)^{\ell+1}\zeta(-\t))\\
=f_{ij}(\beta(t_i-\t))+(-1)^{\ell+1}\t^{\ell+1}F_{ij}(\beta(t_i-\t),\zeta(-\t),(-1)^{\ell+1}\t^{\ell+1})
=a_{ij}\t^{n_{ij}}+\cdots.
\end{multline*}
Consequently, $(f_{ij}\circ\alpha)^{(m)}(t_i)=0$ for $m=0,\ldots,n_{ij}-1$ and $(-1)^{(n_{ij})}(f_{ij}\circ\alpha)^{(n_{ij})}(t_i)=n_{ij}!\,a_{ij}>0$. In addition, $\alpha(t_i-t)\in\{f_{i1}>0,\ldots,f_{is}>0\}$ for $t\in(0,\delta)$ close to $0$.

As $(-1)^{(n_{ij})}(f_{ij}\circ\beta|_{I_i})^{(n_{ij})}(t_i-\t)>\veps>0$ on $[-\delta,\delta]$ and $|(f_{ij}\circ\beta|_{I_i})^{(n_{ij})}-(f_{ij}\circ\alpha|_{I_i})^{(n_{ij})}|<\veps$, we conclude that $(-1)^{(n_{ij})}(f_{ij}\circ\alpha|_{I_i})^{(n_{ij})}(t_i-\t)>0$ on $[-\delta,\delta]$ for each $j=1,\ldots,s$. Suppose there exists a point $t^*\in[t_i-\delta,t_i)$ such that $\alpha(t^*)\not\in\{f_{i1}>0,\ldots,f_{is}>0\}$ and assume $(f_{i1}\circ\alpha)(t^*)\leq0$. As $\alpha(t_i-t)\in\{f_{i1}>0,\ldots,f_{is}>0\}$ for $t\in(0,\delta)$ close to $0$, there exists $\xi_0\in(0,\delta)$ such that $(f_{i1}\circ\alpha)(t_i-\xi_0)=0$. Assume by induction on $m\leq n_{i1}-1$ that there exist values $0<\xi_m<\cdots<\xi_1<\xi_0<\delta$ such that $(f_{i1}\circ\alpha)^{(j)}(t_i-\xi_j)=0$ for $j=0,\ldots,m$. As $(f_{i1}\circ\alpha)^{(m)}(t_i)=0$ and $(f_{i1}\circ\alpha)^{(m)}(t_i-\xi_m)=0$, there exists by Rolle's theorem $\xi_{m+1}\in(0,\xi_m)$ such that $(f_{i1}\circ\alpha)^{(m+1)}(t_i-\xi_{m+1})=0$. In particular, $(f_{i1}\circ\alpha)^{(n_{i1})}(t_i-\xi_{n_{i1}})=0$ and $\xi_{n_{i1}}\in(0,\delta)$, which contradicts the fact that $(-1)^{(n_{i1})}(f_{i1}\circ\alpha|_{I_i})^{(n_{i1})}(t_i-\t)>0$ on $[-\delta,\delta]$. Consequently, $\alpha(t)\in\{f_{i1}>0,\ldots,f_{is}>0\}$ for each $t\in[t_i-\delta,t_i)$. Observe that to prove the latter assertion we have only used that $|(f_{ij}\circ\beta|_{I_i})^{(n_{ij})}-(f_{ij}\circ\alpha|_{I_i})^{(n_{ij})}|<\veps$ and not that $|(f_{ij}\circ\beta|_{I_i})^{(m)}-(f_{ij}\circ\alpha|_{I_i})^{(m)}|<\veps$ for $m=1,\ldots,n_{ij}-1$. We will go deeper into this fact in Remark \ref{sharpr}(i).

(2) Let $K'\subset\Omega$ be a compact set that contains $K$ and let $0<\kappa<\rho$. By Lemma \ref{swdp} there exists a polynomial tuple $\alpha\in\R[\t]^n$ such that $\|\alpha-\beta\|_{[a,b]}<\kappa$, $\|\alpha^{(m)}-\beta^{(m)}\|_{K'}<\kappa$ for $m=1,\ldots,\ell$ (so $\alpha\in{\mathcal U}$) and $\alpha^{(m)}(t_i)=\beta^{(m)}(t_i)$ for $i=1,\ldots,r$ and $m=0,\ldots,\ell$. By (1) we deduce $\alpha((t_i,t_{i+1}))\subset\Ss_i$ for $i=0,\ldots,r$. In addition, $\alpha$ is close to $\beta$ in the $\Cont^\ell_\Omega$-topology of $\Cont^{\ell+4}_\Omega[a,b]$, as required.
\end{proof}

\begin{remarks}\label{sharpr}
(i) Suppose that in the statement of Lemma \ref{clue} each semialgebraic set $\Ss_i$ is the interior of an $n$-dimensional convex polyhedra. Then we may assume that each $\Ss_i:=\{h_{i1}>0,\ldots,h_{is}>0\}$ where $h_{ij}\in\R[\x]$ is a polynomial of degree $1$ for $i=0,\ldots,r$. Recall that $J_k:=[t_k+\delta,t_{k+1}-\delta]$ for $k=0,\ldots,r$ and $I_i:=[t_i-\delta,t_i+\delta]$ for $i=1,\ldots,r$. We keep the notations introduced in the statement and the proof of Lemma \ref{clue}(1) and we analyze how we can simplify the conditions that appear in the statement and the proof of Lemma \ref{clue}(1) to guarantee that $\alpha((t_k,t_{k+1}))\subset\Ss_k$ for $k=0,\ldots,r-1$. We consider $f_{ij}:=h_{i-1,j}$ and $g_{ij}:=h_{ij}$ for $i=1,\ldots,r$ and $j=1,\ldots,s$.

First, to have $\alpha(J_k)\subset\Ss_k$, it is enough that 
$$
\|\dist(\alpha|_{J_k},\R^n\setminus\Ss_k)-\dist(\beta|_{J_k},\R^n\setminus\Ss_k)\|_{J_k}<\min\{\dist(\beta|_{J_k},\R^n\setminus\Ss_k)\}
$$
for $k=0,\ldots,r-1$. By hypothesis the Taylor polynomials of $\alpha$ and $\beta$ at $t_i$ coincide until degree $\ell$. To guarantee that
\begin{align}
\label{2163}&\alpha([t_i-\delta,t_i))\subset\Ss_{i-1}=\{h_{i-1,1}>0,\ldots,h_{i-1,s}>0\},\\
\label{2164}&\alpha((t_i,t_i+\delta])\subset\Ss_i=\{h_{i1}>0,\ldots,h_{is}>0\}
\end{align}
for $i=1,\ldots,r$ it is enough to have, in view of the proof of Lemma \ref{clue}(1), the following properties:
\begin{align*}
&\|(h_{i-1,j}\circ\beta|_{I_i})^{(n_{ij})}-(h_{i-1,j}\circ\alpha|_{I_i})^{(n_{ij})}\|_{I_i}<\min\{(-1)^{(n_{ij})}(h_{i-1,j}\circ\beta|_{I_i})^{(n_{ij})}\},\\
&\|(h_{ij}\circ\beta|_{I_i})^{(p_{ij})}-(h_{ij}\circ\alpha|_{I_i})^{(p_{ij})}\|_{I_i}<\min\{(h_{ij}\circ\beta|_{I_i})^{(p_{ij})}\}
\end{align*}
for $i=1,\ldots,r$. Thus, we do not have to care about the derivatives of order strictly smaller than $n_{ij}$ or $p_{ij}$ (depending on the case). This reduction will be used in the proof of Main Theorem \ref{plcase} in order to simplify the estimations provided in \S\ref{bound}.

(ii) In view of Remark \ref{light} it is not necessary to use in Lemma \ref{clue} (1) that the derivatives of $\alpha$ and $\beta$ at $t_i$ coincide for $m=0,\ldots,\ell$, but only for $m=0,\ldots,\max\{n_{ij},p_{ij}:\ j=1,\ldots,s\}$.

(iii) If $\Ss_{i-1}=\Ss_i$ for some $i=1,\ldots,r$ in the statement of Lemma \ref{clue}, the condition $x_i\in\cl(\Ss_{i-1})\cap\cl(\Ss_i)$ means $x_i\in\cl(\Ss_i)$ and condition (i) reads as $\beta((t_{i-1},t_{i+1})\setminus\{t_i\})\subset\Ss_i$. The reader has to take this into account when applying Lemma \ref{clue} to prove Main Theorem \ref{nashsmart}.\hfill$\sqbullet$
\end{remarks}

\section{Drawing Nash paths inside semialgebraic sets}\label{s3}

In this section we prove Main Theorem \ref{nashsmart}. Before that we need a preliminary result. Again, if we write a series in the form $h:=a_k\t^k+\cdots$, we mean that the lowest order term is $a_k\t^k$ (with $a_k\neq0$) and the remaining terms have higher order and are not relevant for our computation.

\subsection{Double Nash curve selection lemma.}
The following result is an amalgamated modification of the classical (Nash) curve selection lemma \cite[Prop.8.1.13]{bcr} and double polynomial curve selection lemma \cite[Lem.3.8]{fu}. 

\begin{lem}[Double Nash curve selection lemma]\label{doublecurve}
Let $\Ss\subset\R^n$ be a semialgebraic set of dimension $d\geq2$ and $\Ss_d$ the set of points of $\Ss$ of dimension $d$. Pick a point $p\in\cl(\Ss_d)$. Then there exists a Nash arc $\alpha:[-1,1]\to\R^n$ such that $\alpha(0)=p$, $\alpha([-1,1]\setminus\{0\})\subset\Ss_d$ and $\alpha([-1,0))\cap\alpha((0,1])=\varnothing$. If $\Ss$ has dimension $n$, we may assume $\alpha$ is a polynomial arc.
\end{lem}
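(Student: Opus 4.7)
The plan is to combine two applications of the classical Nash curve selection lemma (to secure two disjoint analytic germs going into $\Ss_d$ at $p$) with an explicit polynomial construction of a single smooth arc through $p$ whose two branches both remain inside $\Ss_d$. I would first prove the easier polynomial statement (the case $\dim\Ss=n$) by a direct perturbation argument in the open set $\Int(\Ss)$, and then reduce the general Nash case to this polynomial case via Hironaka's desingularization.

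First I would apply the classical Nash curve selection lemma \cite[Prop.8.1.13]{bcr} to $\Ss_d$ at $p$, obtaining a Nash arc $\beta_1:[0,1]\to\R^n$ with $\beta_1(0)=p$ and $\beta_1((0,1])\subset\Ss_d$. Because $\Ss_d$ is open in $\Ss$ and every point of $\Ss_d$ has local dimension $d\geq 2$, a standard limit argument shows that the local dimension of $\Ss_d$ at $p$ is also $d$; since $\beta_1((0,1])$ is one-dimensional, the set $\Ss_d\setminus\beta_1((0,1])$ therefore still has $p$ in its closure. A second application of curve selection produces $\beta_2:[0,1]\to\R^n$ Nash with $\beta_2(0)=p$ and $\beta_2((0,1])\subset\Ss_d\setminus\beta_1((0,1])$. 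These two disjoint germs encode the double nature of the lemma, but the delicate point is that the naive concatenation $\alpha|_{[0,1]}=\beta_1$, $\alpha(t)=\beta_2(-t)$ on $[-1,0]$ typically fails to be smooth at $0$: the Taylor expansions of $\beta_1$ and $\beta_2$ at $0$ need not match, and if they did the images would coincide. One must therefore produce a genuinely new smooth arc through $p$.

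For the polynomial statement (case $\dim\Ss=n$) I would take $U:=\Int(\Ss)\subset\R^n$, which is open with $p\in\cl(U)$. The polynomial curve selection for open semialgebraic sets (see \cite[Lem.4.1]{fu}) provides a polynomial path $\beta:[0,1]\to\R^n$ with $\beta(0)=p$ and $\beta((0,1])\subset U$, which I may assume injective on $(0,1]$. I would then look for an arc of the form
\[
\alpha(t):=\beta(t^{2})+t^{2m+1}w
\]
with a vector $w\in\R^n$ and an integer $m\geq 1$ to be chosen. A Lojasiewicz-type estimate for the semialgebraic function $s\mapsto\dist(\beta(s),\R^n\setminus U)$ gives a bound of the form $\dist(\beta(s),\R^n\setminus U)\geq c\,s^{k}$ near $s=0^{+}$. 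Taking $2m+1>2k$ and $|w|$ sufficiently small, the odd perturbation $t^{2m+1}w$ is dominated by the distance of $\beta(t^2)$ to $\R^n\setminus U$ near $0$ (compactness handles the range $|t|\in[\rho,1]$), so $\alpha(t)\in U$ for all $t\neq 0$. The odd power $t^{2m+1}$ breaks the symmetry $\alpha(-t)=\alpha(t)$, and a generic choice of $w$ (enriched by further terms $t^{2m'+1}w'$ if needed) ensures that the full images of $[-1,0)$ and $(0,1]$ under $\alpha$ are disjoint. This yields the polynomial arc required by the last statement of the lemma.

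For the general Nash case I would reduce to the polynomial one via desingularization. Let $X\subset\R^n$ be the Zariski closure of $\Ss_d$, which is pure $d$-dimensional, and let $f:X'\to X$ be the proper surjective regular map given by Theorem \ref{hi1}, with $X'$ non-singular of dimension $d$. The set $W:=\Int_{X'}(f^{-1}(\Ss_d))$ is a non-empty open subset of the smooth Nash $d$-manifold $X'$ (non-empty because $f^{-1}(\Ss_d)$ has dimension $d$ in $X'$), and a compactness argument using the properness of $f$ produces $p'\in f^{-1}(p)\cap\cl(W)$. A Nash chart of $X'$ at $p'$ identifies a neighborhood of $p'$ with an open ball in $\R^d$ and $W$ with an open semialgebraic $U\subset\R^d$ satisfying $0\in\cl(U)$; applying the polynomial construction above in $\R^d$ produces a polynomial arc $\tilde\alpha$ through $0$ with disjoint branches inside $U$. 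Because the exceptional locus $E:=f^{-1}(\Sing(\widetilde{X}))$ has dimension $<d$, a generic choice of $\tilde\alpha$ within the polynomial family may be arranged so that $\tilde\alpha((-1,1)\setminus\{0\})\cap E=\varnothing$, and then $f$ is injective on each half-interval with disjoint images; composing with the chart and $f$ yields the required Nash arc $\alpha:[-1,1]\to\R^n$. The main obstacle I expect is the disjointness of the two branches in the polynomial step: containment in $U$ is comfortably controlled by the Lojasiewicz estimate, whereas disjointness of the full images of $[-1,0)$ and $(0,1]$ requires either a dimension count in $\R^d$ (immediate for $d\geq 3$) or a finer choice of the perturbation vectors when $d=2$.
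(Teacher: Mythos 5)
Your overall strategy coincides with the paper's: reduce via Hironaka's desingularization and a Nash chart to the case of a full-dimensional semialgebraic set in $\R^n$ ($n\geq2$), take a polynomial half-branch $\beta$ into the interior, and consider $\alpha(t)=\beta(t^2)+t^{\mathrm{odd}}w$. Your \L{}ojasiewicz estimate for $\dist(\beta(s),\R^n\setminus U)$ is a legitimate substitute for the paper's argument via basic open sets and the leading coefficients of $f_j\circ\beta$, so the containment $\alpha(t)\in U$ for $t\neq0$ is fine. (Your opening paragraph producing two disjoint germs $\beta_1,\beta_2$ is never used in the construction and can be dropped; and rather than choosing $\tilde\alpha$ ``generically'' to avoid the exceptional locus $E$, it is cleaner to delete $E$ from the open set before running the construction, which is what the paper does with its auxiliary set $Y$ ``to be avoided''.)

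The genuine gap is exactly the point you flag and do not close: disjointness of $\alpha([-1,0))$ and $\alpha((0,1])$. Writing $t=-u$ with $u,s>0$, a coincidence $\alpha(-u)=\alpha(s)$ amounts to $\beta(u^2)-\beta(s^2)=(u^{2m+1}+s^{2m+1})w$, so the bad set of $w$ is the image of a $2$-parameter family and its complement is dense only when $d\geq3$. Adding a further term $t^{2m'+1}w'$ does not rescue $d=2$: for fixed $(u,s)$ the bad locus in $(w,w')$-space is an affine subspace of codimension $d$, and its union over the $2$-dimensional $(u,s)$-space has dimension $\leq d+2=2d$ when $d=2$, so no dimension count applies. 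Since the lemma is stated for $d\geq2$ and is invoked in the paper precisely in $2$-dimensional situations (e.g.\ for $\Ss_i\cap\Ss_{i+1}$ and in Step 0 of Main Theorem \ref{nashsmart}), this case cannot be discarded. The paper's resolution is a normalization you skip: after a linear change of coordinates and a reparameterization one arranges that the second coordinate of the branch is exactly the monomial $t^{\ell_2}$ and truncates so that the first coordinate of $\beta(t^2)$ contains only even powers of $t$; then $\alpha_2(-u)=\alpha_2(s)$ forces $u=s$, and $\alpha_1(-u)=\alpha_1(u)$ gives $u^q=-u^q$ with $q$ odd, hence $u=0$. Some such device that pins down the parameter is needed to make your construction valid for all $d\geq2$.
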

\begin{proof}
Let $X$ be the Zariski closure of $\Ss$ in $\R^n$, which is an algebraic set of dimension $d$. By Theorem \ref{hi1} there exist a non-singular algebraic set $X'\subset\R^m$ and a proper regular map $f:X'\to X$ such that $f|_{X'\setminus f^{-1}(\Sing(X))}:X'\setminus f^{-1}(\Sing(X))\rightarrow X\setminus\Sing(X)$ is a Nash diffeomorphism whose inverse map is also regular. As $\dim(\Sing(X))\leq d-1$, we have $\Ss_d\setminus\Sing(X)$ is dense in $\Ss_d$. As $p\in\cl(\Ss_d)=\cl(\Ss_d\setminus\Sing(X))$ and $f$ is proper, there exists a point $p'\in\cl(f^{-1}(\Ss_d\setminus\Sing(X)))$ such that $f(p')=p$. Assume that we find a Nash arc $\beta:[-1,1]\to\R^m$ such that $\beta(0)=p'$, $\beta([-1,1]\setminus\{0\})\subset f^{-1}(\Ss_d\setminus\Sing(X))$ and $\beta([-1,0))\cap\beta((0,1])=\varnothing$. As $f$ is a regular map and in particular a Nash map, if we define $\alpha:=f\circ\beta$, we will be done.

So let us assume: {\em the Zariski closure $X$ of $\Ss$ in $\R^n$ is non-singular (and consequently $X$ is a disjoint union of finitely many Nash manifolds maybe of different dimensions) and we have an algebraic set $Y\subset X$ of dimension strictly smaller than $d$ `to be avoided'}. Let $U\subset\R^n$ be an open semialgebraic neighborhood of $p$ in $X$ endowed with a Nash diffeomorphism $\varphi:U\to\R^d$ such that $\varphi(p)=0$. Let $\Ss'':=\varphi((\Ss_d\setminus Y)\cap U)$ and assume that we find a Nash arc $\gamma:[-1,1]\to\R^d$ such that $\gamma(0)=0$, $\gamma([-1,1]\setminus\{0\})\subset\Ss''$ and $\gamma([-1,0))\cap\gamma((0,1])=\varnothing$. If we define $\beta:=\varphi^{-1}\circ\gamma$, we will be done.

Thus, we can suppose: {\em $\Ss$ is pure dimensional of dimension $n\geq2$, the Zariski closure of $\Ss$ in $\R^n$ is $\R^n$ and $p\in\cl(\Ss)$ is the origin}. As $\Int(\Ss)$ is dense in $\Ss$ (because $\Ss$ is pure dimensional), there exists by \cite[Prop.8.1.13]{bcr} a Nash arc $\eta:=(\eta_1,\ldots,\eta_n):[-1,1]\to\R^n$ such that $\eta(0)=p$ and $\eta((0,1])\subset\Int(\Ss)$. After shrinking the domain of $\eta$, we may assume that each $\eta_i\in\R[[\t]]_{\rm alg}$ is an algebraic analytic series. After a linear change of coordinates and a reparameterization of $\eta$, we may assume that $\eta_2:=t^{\ell_2}$ for some $\ell_2\geq1$ (recall that $n\geq2$). As $\Int(\Ss)$ is an open semialgebraic subset of $\R^n$ and $p\in\cl(\Ss)=\cl(\Int(\Ss))$, there exist polynomials $f_1,\ldots,f_r\in\R[\x]$ such that $f_i(p)=0$ for $i=1,\ldots,r$ and
$$
\eta((0,\veps])\subset\{f_1>0,\ldots,f_r>0\}\subset\Int(\Ss)
$$
for some $0<\veps<1$ (because $\Int(\Ss)$ can be written by \cite[Thm.2.7.2]{bcr} as a finite union of basic open semialgebraic sets, see \S\ref{sst}). Consider the algebraic series $f_j(\eta)\in\R[[\t]]_{\rm alg}$, which satisfies $f_j(\eta)=a_j\t^{k_j}+\cdots$ for some $a_j>0$ and $k_j\geq1$. Define $m:=\max\{k_j:\ j=1,\ldots,r\}+\ell_2+1$ and let $q>2m$ be an odd positive integer. Let $\zeta_j\in\R[[\t]]_{\rm alg}$ be an algebraic series such that $\xi_j:=\eta_j+\t^m\zeta_j\in\R[\t]$ is a univariate polynomial for $j=1,\ldots,n$ and $\xi_2=\eta_2=\t^{\ell_2}$ (that is, $\zeta_2=0$). Denote $\xi:=(\xi_1,\ldots,\xi_n)$ and $\zeta:=(\zeta_1,\ldots,\zeta_n)$. Define $\gamma:=\xi(\t^2)+\t^qe_1\in\R[\t]^n$, where $e_1:=(1,0,\ldots,0)$. As the exponent $q$ is odd, all the exponents of the non-zero monomials (if any) of the polynomial $\xi_1(\t^2)$ are even and $\xi_2(\t^2)=\t^{2\ell_2}$, we deduce $\gamma([-\veps,0))\cap\gamma((0,\veps])=\varnothing$ for each $\veps>0$.

Let $\x:=(\x_1,\ldots,\x_n)$, $\y:=(\y_1,\ldots,\y_n)$ and $\z$ be a single variable. Write
$$
f_j(\x+\z\y)=f_j(\x)+\z h_j(\x,\y,\z)
$$
where $h_j\in\R[\x,\y,\z]$. Then
\begin{multline*}
f_j(\gamma(\t))=f_j(\xi(\t^2)+\t^qe_1)=f_j(\eta(\t^2)+\t^{2m}(\zeta(\t^2)+\t^{q-2m}e_1))\\
=f_j(\eta(\t^2))+\t^{2m}h_j(\eta(\t^2),\zeta(\t^2)+t^{q-2m}e_1,\t^{2m})=a_j\t^{2k_j}+\cdots,
\end{multline*}
so for $\veps>0$ small enough $\gamma:[-\veps,\veps]\to\R^n$ is a polynomial arc such that in addition 
$$
\gamma([-\veps,\veps]\setminus\{0\})\subset\{f_1>0,\ldots,f_r>0\}\subset\Int(\Ss)
$$ 
and $\gamma(0)=0=p$. After an affine reparameterization in order to have the interval $[-1,1]$ as the domain of $\gamma$, we deduce $\gamma$ is the searched polynomial path.
\end{proof}

\subsection{Smart Nash curve selection lemma}
Recall that a $d$-dimensional Nash manifold $M\subset\R^n$ with boundary is a $d$-dimensional smooth submanifold with boundary of $\R^n$ that is in addition a semialgebraic set. We are ready to prove Main Theorem \ref{nashsmart} (although we postpone some technicalities until Appendix \ref{A} for the sake of clearness).

\begin{proof}[Proof of Main Theorem \em\ref{nashsmart}]
Let $X\subset\R^n$ be the Zariski closure of $\Ss$ in $\R^n$, $\Tt:=\cl(\Ss)\setminus\Reg(\Ss)$ and $Y\subset X$ the Zariski closure of $\Tt\cup\Sing(X)$. If $d:=\dim(\Ss)$, then $\dim(X)=d$ and $\dim(Y)\leq d-1$, so $\Ss\setminus Y\neq\varnothing$ is dense in $\Ss$, because $\Ss$ is pure dimensional. The proof is conducted in several steps: 

\noindent{\sc Step 0. Reduction of the $1$ dimensional case to the $2$-dimensional case.} To avoid a misleading use of some preliminary results that only work for dimension $\geq2$, we study this case separately. Assume that $\dim(X)=1$. Define $\Ss^\bullet:=\Ss\cup\{p_1,\ldots,p_r,q_1,\ldots,q_{r-1}\}$, which is by \cite[Lem.7.3 \& Cor.7.6]{f1} irreducible. Observe that $X$ is also the Zariski closure of $\Ss^\bullet$, because $\Ss^\bullet\subset\cl(\Ss)$. Let $\widetilde{X}\subset\C^n$ be the Zariski closure of $X$ in $\C^n$ and let $(\widetilde{X}',\pi)$ be the normalization of $X$. We endow $(\widetilde{X}',\pi)$ with an involution $\widetilde{\sigma}:\widetilde{X}'\to \widetilde{X}'$ induced by the involution $\sigma:\widetilde{X}\to\widetilde{X}$ that arises from the restriction to $\widetilde{X}$ of the complex conjugation in $\C^n$. We may assume $\widetilde{X}'\subset\C^m$ and $\widetilde{\sigma}$ is the restriction to $\widetilde{X}'$ of the complex conjugation in $\C^m$ (see \cite[Prop.3.11]{fg1}). By \cite[Thm.3.15]{fg1} and as $\Ss^\bullet$ is irreducible, $\pi^{-1}(\Ss^\bullet)$ has a (unique) $1$-dimensional connected component $\Ss'_0$ such that $\pi(\Ss'_0)=\Ss^\bullet$. As $X$ has dimension $1$, it is a coherent analytic set, so $\Ss'_0\subset Z:=\widetilde{X}'\cap\R^m$. As $\widetilde{X}'$ is a normal curve, $Z$ is a non-singular real algebraic curve. We claim: \em the connected components of $Z$ are Nash diffeomorphic either to $\sph^1$ or to the real line $\R$\em.

By \cite[Thm.VI.2.1]{sh} there exist a compact affine non-singular real algebraic curve $Z^*$, a finite set $F$, which is empty if $Z$ is compact, and a union $Z'$ of some connected components of $Z^*\setminus F$ such that $Z$ is Nash diffeomorphic to $Z'$ and $\cl(Z')$ is a compact $1$-dimensional Nash manifold with boundary $F$. As $Z^*$ is a compact affine non-singular real algebraic curve, its connected components are diffeomorphic to $\sph^1$, so by \cite[Thm.VI.2.2]{sh} the connected components of $Z^*$ are in fact Nash diffeomorphic to $\sph^1$. Now, each connected component of $Z$ is Nash diffeomorphic to an open connected (semialgebraic) subset of $\sph^1$, as claimed. 

Consequently, $\Ss'_0$ is Nash diffeomorphic to a $1$-dimensional connected (semialgebraic) subset $\Ss'$ of $\sph^1$. Thus, there exists a generically $1$-$1$ surjective Nash map $\varphi$ from a connected (semialgebraic) subset $\Ss'$ of $\sph^1$ to $\Ss^\bullet$. By \cite[Thm.3.15]{fg1} and as each $\Ss_i$ is irreducible (because it is a connected Nash manifold \cite[(3.1)(i)]{fg1}), $\varphi^{-1}(\Ss_i)$ has a (unique) $1$-dimensional connected component $\Ss'_i$ such that $\varphi(\Ss'_i)=\Ss_i$, which is an open connected (semialgebraic) subset of $\sph^1$. As there exists a Nash bridge $\Gamma_i$ between $\Ss_i$ and $\Ss_{i+1}$ with base point $q_i$, there exists by \cite[Lem.B.2]{f1} a Nash bridge $\Gamma_i'$ between $\Ss_i'$ and $\Ss_{i+1}'$ with base point $q_i'\in\sph^1$ such that $\varphi(q_i')=q_i$ for $i=1,\ldots,r-1$. Pick points $p_i'\in\cl(\Ss'_i)$ such that $\varphi(p_i')=p_i$ for $i=1,\ldots,r$. Observe that: {\em If $\beta:[0,1]\to\Ss^\bullet$ is a continuous semialgebraic path satisfying the conditions of the statement of Main Theorem {\em\ref{nashsmart}} with respect to $\Ss^\bullet$, there exists} by \cite[Lem.B.1 \& B.2]{f1} {\em a continuous semialgebraic path $\gamma:[0,1]\to\Ss'$ satisfying the conditions of such statement with respect to $\Ss'$ such that $\varphi\circ\gamma=\beta$}. In this case we take $p_i':=\gamma(t_i)$, which fulfills $\varphi(p_i')=p_i$, for $i=1,\ldots,r$.

Consider the Nash retraction $\psi:\R^2\setminus\{0\}\to\sph^1,\ (x,y)\mapsto\frac{(x,y)}{\sqrt{x^2+y^2}}$, which satisfies $\psi|_{\sph^1}=\id_{\sph^1}$, and define $\Ss_i'':=\psi^{-1}(\Ss_i')$, which contains $\Ss_i'$, for $i=1,\ldots,r$. We have: 
\begin{itemize}
\item $\Ss_i''$ is an open connected semialgebraic subset of $\R^2\setminus\{0\}$, which is a Nash manifold. 
\item $p_i'\in\cl(\Ss_i')\subset\cl(\Ss_i'')$ for $i=1,\ldots,r$.
\item $q_i'\in\cl(\Ss_i')\cap\cl(\Ss_{i+1}')\subset\cl(\Ss_i'')\cap\cl(\Ss_{i+1}'')$ for $i=1,\ldots,r-1$.
\item $\Gamma_i'$ is a Nash bridge between $\Ss_i'\subset\Ss_i''$ and $\Ss_{i+1}'\subset\Ss_{i+1}''$ with base point $q_i'$ for $i=1,\ldots,r-1$. 
\end{itemize}
Thus, if we find a Nash path $\alpha_0:[0,1]\to\bigcup_{i=1}^r\Ss_i''\cup\{p_1',\ldots,p_r',q_1',\ldots,q_{r-1}'\}$ satisfying the required conditions of the statement of Main Theorem \ref{nashsmart} for the new setting, then $\alpha:=\varphi\circ\psi\circ\alpha_0:[0,1]\to\Ss^\bullet=\bigcup_{i=1}^r\Ss_i\cup\{p_1,\ldots,p_r,q_1,\ldots,q_{r-1}\}$ is a Nash path satisfying the required conditions in the statement.

Consequently, to prove Main Theorem \ref{nashsmart} we assume in the following that $d\geq2$. To lighten notations, we reset all the notations used in {\sc Step 0}.

\noindent{\sc Step 1. Construction of a suitable continuous semialgebraic path $\beta$.}
We show first: {\em There exists a continuous semialgebraic path $\beta:[0,1]\to\R^n$ such that 
$$
\eta(\beta)\subset(0,1)\setminus\{t_1,\ldots,t_r,s_1,\ldots,s_{r-1}\},
$$ 
$\beta(\eta(\beta))\subset\bigcup_{i=1}^r\Ss_i$ and $\beta$ satisfies conditions {\em(i)}, {\em(ii)} and {\em(iii)} in the statement of Main Theorem {\em\ref{nashsmart}}}. Recall that $\Tt:=\cl(\Ss)\setminus\Reg(\Ss)$ and $Y\subset X$ the Zariski closure of $\Tt\cup\Sing(X)$.

Let us check: {\em For each $i=1,\ldots,r-1$ we may modify the Nash bridges $\Gamma_i$ in order to have in addition $\Gamma_i\cap Y\subset\{q_i\}$ and $(\Gamma_i\setminus\{q_i\})\cap(\Gamma_j\setminus\{q_j\})=\varnothing$ if $i\neq j$}. 

Pick any index $i=1,\ldots,r-1$ and suppose we have constructed the Nash bridges $\Gamma_j$ for $1\leq j\leq i-1$ satisfying the required conditions. Denote the Zariski closure of $\bigcup_{j=1}^{i-1}\Gamma_j$ with $Y_i'$. We distinguish two cases:

\noindent{\sc Case 1.} Suppose first $q_i\in\cl(\Ss_i\cap\Ss_{i+1})$. Observe that $\Ss_i\cap\Ss_{i+1}\neq\varnothing$ is pure dimensional and $\dim(\Ss_i\cap\Ss_{i+1})=d$. As $\dim(Y)<\dim(\Ss_i\cap\Ss_{i+1})$ and $\dim(Y_i')\leq1<2\leq\dim(\Ss_i\cap\Ss_{i+1})$, we have $q_i\in\cl((\Ss_i\cap\Ss_{i+1})\setminus(Y\cup Y_i'))$. By Lemma \ref{doublecurve} there exists a Nash arc $\alpha:[-1,1]\to\R^n$ such that $\alpha(0)=q_i$, $\alpha([-1,1]\setminus\{0\})\subset(\Ss_i\cap\Ss_{i+1})\setminus(Y\cup Y_i'))$ and $\alpha([-1,0))\cap\alpha((0,1])=\varnothing$. We substitute the old $\Gamma_i$ by the new $\Gamma_i:=\alpha([-1,1])$ and observe $\Gamma_i\cap Y\subset\{q_i\}$ and $(\Gamma_i\setminus\{q_i\})\cap(\Gamma_j\setminus\{q_j\})=\varnothing$ if $1\leq j\leq i-1$. 

\begin{center}
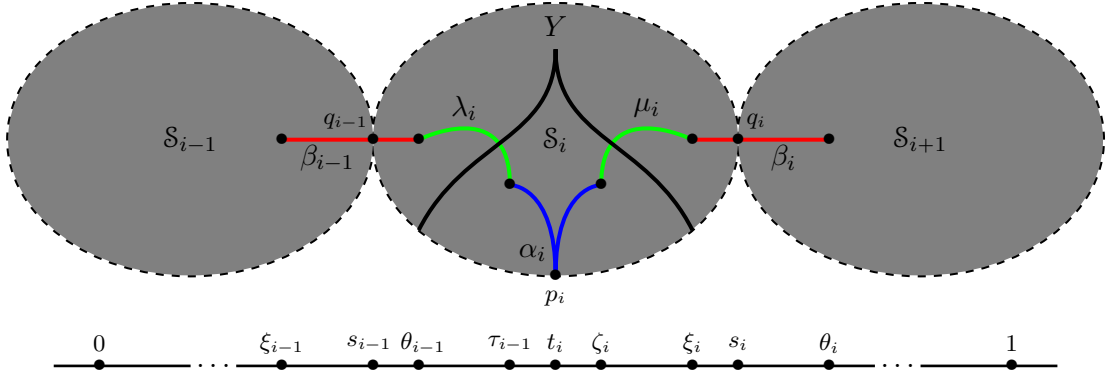
\begin{figure}[ht]
\begin{tikzpicture}[scale=1.2]

\draw[dashed,line width=1.5pt] (1,1) ellipse (2cm and 1.5cm);
\draw[dashed,line width=1.5pt] (5,1) ellipse (2cm and 1.5cm);
\draw[dashed,line width=1.5pt] (9,1) ellipse (2cm and 1.5cm);

\draw[fill=gray!100,opacity=0.5,draw=none] (1,1) ellipse (2cm and 1.5cm); 
\draw[fill=gray!100,opacity=0.5,draw=none] (5,1) ellipse (2cm and 1.5cm); 
\draw[fill=gray!100,opacity=0.5,draw=none] (9,1) ellipse (2cm and 1.5cm); 

\draw[color=blue,line width=1.5pt] (4.5,0.5) .. controls (4.5,0.5) and (5,0.5) .. (5,-0.5) .. controls (5,0.5) and (5.5,0.5) .. (5.5,0.5);

\draw[color=red,line width=1.5pt] (2,1) -- (3.5,1);
\draw[color=red,line width=1.5pt] (6.5,1) -- (8,1);

\draw[color=green,line width=1.5pt] (3.5,1) .. controls (3.5,1) and (4.5,1.5) .. (4.5,0.5);
\draw[color=green,line width=1.5pt] (6.5,1) .. controls (6.5,1) and (5.5,1.5) .. (5.5,0.5);

\draw (3,1) node{$\bullet$};
\draw (7,1) node{$\bullet$};
\draw (5,-0.5) node{$\bullet$};
\draw (3.5,1) node{$\bullet$};
\draw (2,1) node{$\bullet$};
\draw (6.5,1) node{$\bullet$};
\draw (8,1) node{$\bullet$};
\draw (4.5,0.5) node{$\bullet$};
\draw (5.5,0.5) node{$\bullet$};

\draw (2.7,1.2) node{\footnotesize$q_{i-1}$};
\draw (7.2,1.2) node{\footnotesize$q_i$};
\draw (5,-0.75) node{\footnotesize$p_i$};

\draw (2.5,0.8) node{$\beta_{i-1}$};
\draw (7.5,0.8) node{$\beta_i$};
\draw (4.75,-0.25) node{$\alpha_i$};
\draw (4,1.35) node{$\lambda_i$};
\draw (6,1.35) node{$\mu_i$};

\draw[color=black,line width=1pt] (1.5,-1.5) -- (8.5,-1.5);
\draw (1.25,-1.5) node{$\ldots$};
\draw (8.75,-1.5) node{$\ldots$};

\draw[color=black,line width=1pt](-0.5,-1.5) -- (1,-1.5);
\draw (0,-1.5) node{$\bullet$};
\draw (0,-1.25) node{\footnotesize$0$};

\draw[color=black,line width=1pt](9,-1.5) -- (10.5,-1.5);
\draw (10,-1.5) node{$\bullet$};
\draw (10,-1.25) node{\footnotesize$1$};

\draw[color=black,line width=1.5pt] (3.5,0) .. controls (4,1) and (5,1) .. (5,2) .. controls (5,1) and (6,1) .. (6.5,0);
\draw (5,2.25) node{$Y$};

\draw (2,-1.5) node{$\bullet$};
\draw (3,-1.5) node{$\bullet$};
\draw (3.5,-1.5) node{$\bullet$};
\draw (5,-1.5) node{$\bullet$};
\draw (4.5,-1.5) node{$\bullet$};
\draw (5.5,-1.5) node{$\bullet$};
\draw (6.5,-1.5) node{$\bullet$};
\draw (7,-1.5) node{$\bullet$};
\draw (8,-1.5) node{$\bullet$};

\draw (2,-1.25) node{\footnotesize$\xi_{i-1}$};
\draw (3,-1.25) node{\footnotesize$s_{i-1}\ $};
\draw (3.5,-1.25) node{\footnotesize$\ \theta_{i-1}$};
\draw (4.5,-1.25) node{\footnotesize$\tau_{i-1}$};
\draw (5,-1.25) node{\footnotesize$t_i$};
\draw (5.5,-1.25) node{\footnotesize$\zeta_i$};
\draw (6.5,-1.25) node{\footnotesize$\xi_i$};
\draw (7,-1.25) node{\footnotesize$s_i$};
\draw (8,-1.25) node{\footnotesize$\theta_i$};

\draw (1,1) node{$\Ss_{i-1}$};
\draw (5,1) node{$\Ss_i$};
\draw (9,1) node{$\Ss_{i+1}$};

\end{tikzpicture}
\caption{Construction of the Nash paths $\lambda_i$ and $\mu_i$.\label{fig2}}
\vspace*{-1.75em}
\end{figure}
\end{center}

\noindent{\sc Case 2.} Suppose next $q_i\not\in\cl(\Ss_i\cap\Ss_{i+1})$. Then there exists an open semialgebraic neighborhood $U\subset X$ of $q_i$ such that $\Ss_i\cap\Ss_{i+1}\cap U=\varnothing$. As $q_i\in\cl(\Ss_i)\cap\cl(\Ss_{i+1})$, we also have $q_i\in\cl(\Ss_i\cap U)\cap\cl(\Ss_{i+1}\cap U)$. We shrink $U$ to have in addition that $\Ss_i\cap U$ and $\Ss_{i+1}\cap U$ are connected Nash manifolds. Shrinking $\Gamma_i$ if necessary we have that it is a Nash bridge between $\Ss_i\cap U$ and $\Ss_{i+1}\cap U$ with base point $q_i$. By \cite[Main Thm.1.1 \& Prop.7.6]{f1} the union $(\Ss_i\cap U)\cup(\Ss_{i+1}\cap U)\cup\{q_i\}$ is a semialgebraic set connected by analytic paths. By \cite[Prop.7.8]{f1} we may assume that $\Gamma_i\cap (Y\cup Y_i')\subset\{q_i\}$. In particular, $(\Gamma_i\setminus\{q_i\})\cap(\Gamma_j\setminus\{q_j\})=\varnothing$ if $1\leq j\leq i-1$. 

Next, let $\beta_i:[-1,1]\to\Gamma_i\subset\Ss\cup\{q_i\}$ be a Nash parameterization of the Nash bridge $\Gamma_i$ such that $\beta_i(0)=q_i$, $\beta_i([-1,0))\subset\Ss_i$ and $\beta_i((0,1])\subset\Ss_{i+1}$. Let $Y'$ be the Zariski closure of $\bigcup_{i=1}^{r-1}\Gamma_i$. Using Lemma \ref{doublecurve} recursively we find Nash arcs $\alpha_i:[-1,1]\to\Ss_i\cup\{p_i\}$ such that $\alpha_i(0)=p_i$, $\alpha_i([-1,1]\setminus\{0\})\subset\Ss_i\setminus (Y\cup Y')$, $\alpha_i([-1,0))\cap\alpha_j((0,1])=\varnothing$ and if we denote $\Lambda_i:=\alpha_i([-1,1])$, then $(\Lambda_i\setminus\{p_i\})\cap(\Lambda_j\setminus\{p_j\})=\varnothing$ for $1\leq j<i\leq r$. In addition, $(\Gamma_i\setminus\{q_i\})\cap(\Lambda_j\setminus\{p_j\})=\varnothing$ for $i=1,\ldots,r-1$ and $j=1,\ldots,r$.

Thus, the collection of semialgebraic sets 
$$
\{\Gamma_i\setminus\{q_i\}:\ i=1,\ldots,r-1\}\cup\{\Lambda_j\setminus\{p_j\}: j=1,\ldots,r\}
$$ 
is a pairwise disjoint family. We affinely reparameterize the domains of $\beta_i$ and $\alpha_j$ and shrink them if necessary in such a way that there exist values
\begin{multline*}
\tau_0:=s_0=0<t_1<\zeta_1<\xi_1<s_1<\theta_1<\tau_1<t_2<\zeta_2<\cdots\\
\tau_{r-2}<t_{r-1}<\zeta_{r-1}<\xi_{r-1}<s_{r-1}<\theta_{r-1}<\tau_{r-1}<t_r<1=s_r=:\zeta_r
\end{multline*}
such that:
\begin{itemize}
\item[$\bullet$] $\alpha_i:[\tau_{i-1},\zeta_i]\to\Ss_i\cup\{p_i\}$ and $\alpha_i(t_i)=p_i$.
\item[$\bullet$] $\beta_i:[\xi_i,\theta_i]\to\Gamma_i$ and $\beta_i(s_i)=q_i$.
\end{itemize}

The points $\alpha_i(\tau_{i-1}),\alpha_i(\zeta_i),\beta_{i-1}(\theta_{i-1}),\beta_i(\xi_i)$ belong to $\Ss_i\setminus Y$, which is an open semialgebraic subset of the connected Nash manifold $\Ss_i$, and they are pairwise different. By \cite[Thm.1.5]{f1} there exist: 
\begin{itemize}
\item a Nash path $\lambda_i:[\theta_{i-1},\tau_{i-1}]\to\Ss_i$ such that $\lambda_i(\theta_{i-1})=\beta_{i-1}(\theta_{i-1})$ and $\lambda_i(\tau_{i-1})=\alpha_i(\tau_{i-1})$,
\item a Nash path $\mu_i:[\zeta_i,\xi_i]\to\Ss_i$ such that $\mu_i(\zeta_i)=\alpha_i(\zeta_i)$ and $\mu_i(\xi_i)=\beta_i(\xi_i)$.
\end{itemize}
By \cite[Lem.7.7]{f1} we have $\lambda_i^{-1}(Y)$ and $\mu_i^{-1}(Y)$ are finite sets (Figure \ref{fig2}). 

Denote $Z:=\{\tau_0,\ldots,\tau_{r-1},\zeta_1,\ldots,\zeta_r,\xi_1,\ldots,\xi_{r-1},\theta_1,\ldots,\theta_{r-1}\}$. Thus, concatenating all the previous Nash paths and arcs we construct a piecewise Nash path $\beta:[0,1]\to\R^n$ such that
\begin{itemize}
\item[(1)] $\beta([0,1])\subset\bigcup_{i=1}^r\Ss_i\cup\{p_1,\ldots,p_r,q_1,\ldots,q_{r-1}\}$.
\item[(2)] $\beta(t_i)=p_i$ for $i=1,\ldots,r$.
\item[(3)] $\beta((t_i,s_i))\subset\Ss_i$, $\beta((s_i,t_{i+1}))\subset\Ss_{i+1}$ and $\beta(s_i)=q_i$.
\item[(4)] $\eta(\beta)\subset Z\subset(0,1)\setminus\{t_1,\ldots,t_r,s_1,\ldots,s_{r-1}\}$ (because $\beta|_{[0,1]\setminus Z}$ is a Nash map).
\item[(5)] $\beta^{-1}(Y)$ is a finite set and $\eta(\beta)\cap\beta^{-1}(Y)=\varnothing$ (because $\eta(\beta)\subset Z$ and $\beta(Z)\cap Y=\varnothing$).
\end{itemize}

Thus, we have provided a procedure to construct a continuous semialgebraic path $\beta:[0,1]\to\R^n$ such that $\eta(\beta)\subset(0,1)\setminus\{t_1,\ldots,t_r,s_1,\ldots,s_{r-1}\}$, $\beta^{-1}(Y)$ is a finite set, $\eta(\beta)\cap\beta^{-1}(Y)=\varnothing$ and $\beta$ satisfies conditions (i), (ii) and (iii) in the statement.  

\noindent{\sc Step 2. Modification of a given continuous semialgebraic path $\beta$.}
Fix in this step any continuous semialgebraic path $\beta:[0,1]\to\R^n$ satisfying the required conditions (i), (ii) and (iii) in the statement. By Lemma \ref{modification} (below) we may assume in addition (perturbing $\beta$ slightly if necessary) that $\beta^{-1}(Y)$ is a finite set and $\eta(\beta)\cap\beta^{-1}(Y)=\varnothing$. For the sake of clearness and to make the proof more discursive we have postponed this technical part of the proof until Appendix \ref{A}.

\noindent{\sc Step 3. Reduction to the open semialgebraic setting.} 
By Theorem \ref{hi1} there exist a non-singular algebraic set $X'\subset\R^m$ and a proper regular map $f:X'\to X$ such that the restriction $f|_{X'\setminus f^{-1}(\Sing(X))}:X'\setminus f^{-1}(\Sing(X))\to X\setminus\Sing(X)$ is a Nash diffeomorphism whose inverse map is also regular. If $A\subset X$, the strict transform of $A$ under $f$ is $A':=\cl(f^{-1}(A\setminus\Sing(X))\cap f^{-1}(A)$. As $f$ is proper, $f(A')=\cl(A\setminus\Sing(X))\cap A$. Thus, if $A\setminus\Sing(X)$ is dense in $A$, one has $f(A')=A$. This happens for instance if $A$ is a pure dimensional semialgebraic set of dimension $d$.

Let $\Ss'$ be the strict transform of $\Reg(\Ss)$ under $f$ and $\Ss_i'$ the strict transform of $\Ss_i$, which is a connected Nash submanifold of $\R^m$, because $\Reg(\Ss)\subset X\setminus\Sing(X)$. By \cite[Lem.B.1 \& B.2]{f1} the strict transform under $f$ of $\beta$ is a continuous semialgebraic path $\gamma:[0,1]\to\cl(\Ss')$, which satisfies $f\circ\gamma=\beta$. Denote $p_i':=\gamma(t_i)$ and $q_i':=\gamma(s_i)$. Observe that $f(p_i')=p_i$ for $i=1,\ldots,r$ and $f(q_i')=q_i$ for $i=1,\ldots,r-1$. We have:
\begin{itemize}
\item[(i)] $\gamma([0,1])\subset\bigcup_{i=1}^r\Ss_i'\cup\{p_1',\ldots,p_r',q_1',\ldots,q'_{r-1}\}$.
\item[(ii)] $\gamma(t_i)=p_i'$ for $i=1,\ldots,r$.
\item[(iii)] $\gamma((t_i,s_i))\subset\Ss_i'$, $\gamma((s_i,t_{i+1}))\subset\Ss_{i+1}'$ and $\gamma(s_i)=q_i'$.
\end{itemize}

By \cite[Cor.8.9.5]{bcr} there exists a Nash tubular neighborhood $(U,\rho)$ of $X'$ in $\R^m$ where $\rho:U\to X'$ is a Nash retraction. Define $\Ss'':=\rho^{-1}(\Ss')$ and $\Ss''_k:=\rho^{-1}(\Ss'_k)$ for $k=1,\ldots,r$, which are open semialgebraic subsets of $\R^m$. As each Nash manifold $\Ss_k'$ is connected, shrinking $U$ if necessary, we may assume in addition that each $\Ss''_k$ is connected. Observe that $\gamma([0,1])\subset\bigcup_{i=1}^r\Ss''_k\cup\{p_1',\ldots,p_r',q_1',\ldots,q'_{r-1}\}$. There exists $\kappa>0$ small enough such that $\gamma|_{[s_i-\kappa,s_i+\kappa]}$ supplies by \cite[Lem.B.1 \& Lem. B.2]{f1} a Nash bridge between $\Ss_i''$ and $\Ss_{i+1}''$ for $i=1,\ldots,r-1$.

\noindent{\sc Step 4. Computing the order of differentiability.}
We need to compute certain positive integer $\ell$ in order to apply Lemma \ref{clue}(2). Recall that each $\Ss_i''$ is an open semialgebraic set and $\gamma$ is a Nash path in a neighborhood of the finite set $\{t_1,\ldots,t_r,s_1,\ldots,s_{r-1}\}$ such that $\gamma$ is a non-trivial Nash arc inside $\Ss_i''\cup\{p_i'\}$ around $t_i$ and $\gamma$ provides a Nash bridge between $\Ss_i''$ and $\Ss_{i+1}''$ with base point $q_i'$ around $s_i$. As each $\Ss_i''$ is an open semialgebraic set, it is by \cite[Thm.2.7.2]{bcr} a finite union of basic open semialgebraic sets, see \S\ref{sst}. As $\gamma$ is a non-trivial Nash arc (around $t_i$) inside $\Ss_i''\cup\{p_i'\}$, both (open) branches around $t_i$ are contained in one of these basic open semialgebraic sets. Thus, there exist polynomials $f_{ij},g_{ij}\in\R[\x]$ such that: 
\begin{itemize}
\item[$\bullet$] $\{f_{i1}>0,\ldots,f_{is}>0\}\subset\Ss''_i$ is adherent to $p_i'$ and $(f_{ij}\circ\gamma)(t_i-\t)=a_{ij}\t^{e_{ij}}+\cdots$, where $a_{ij}>0$ and $e_{ij}$ is a positive integer.
\item[$\bullet$] $\{g_{i1}>0,\ldots,g_{is}>0\}\subset\Ss''_i$ is adherent to $p_i'$ and $(g_{ij}\circ\gamma)(t_i+\t)=b_{ij}\t^{u_{ij}}+\cdots$, where $b_{ij}>0$ and $u_{ij}$ is a positive integer.
\end{itemize}
Analogously, as $\gamma$ provides (around $s_i$) a Nash bridge between $\Ss_i''$ and $\Ss_{i+1}''$ with base point $q_i'$, one of its two (open) branches around $t_i$ is contained in a basic open semialgebraic subset of $\Ss_i''$ and its other (open) branch around $t_i$ is contained in a basic open semialgebraic subset of $\Ss_{i+1}''$. Thus, there exist polynomials $h_{ij},m_{ij}\in\R[\x]$ such that: 
\begin{itemize}
\item[$\bullet$] $\{h_{i1}>0,\ldots,h_{is}>0\}\subset\Ss''_i$ is adherent to $q_i'$ and $(h_{ij}\circ\gamma)(s_i-\t)=c_{ij}\t^{v_{ij}}+\cdots$, where $c_{ij}>0$ and $v_{ij}$ is a positive integer.
\item[$\bullet$] $\{m_{i1}>0,\ldots,m_{is}>0\}\subset\Ss''_{i+1}$ is adherent to $q_i'$ and $(m_{ij}\circ\gamma)(s_i+\t)=d_{ij}\t^{w_{ij}}+\cdots$, where $d_{ij}>0$ and $w_{ij}$ is a positive integer.
\end{itemize}
Define $\ell:=\max\{e_{ij},u_{ij},v_{ij},w_{ij}:\ 1\leq i\leq r,1\leq j\leq s\}$.

\noindent{\sc Conclusion.} By Lemma \ref{clue}(2) there exists a polynomial path $\alpha_0:\R\to\R^m$ that satisfies: 
\begin{itemize}
\item[(i)] $\alpha_0([0,1])\subset\bigcup_{i=1}^r\Ss_i''\cup\{p_1',\ldots,p_r',q_1',\ldots,q_{r-1}'\}$.
\item[(ii)] $\alpha_0(t_i)=p_i'$ for $i=1,\ldots,r$.
\item[(iii)] $\alpha_0((t_i,s_i))\subset\Ss_i''$, $\alpha_0((s_i,t_{i+1}))\subset\Ss_{i+1}''$ and $\alpha_0(s_i)=q_i'$ for $i=1,\ldots,r-1$.
\item[(iv)] $\alpha_0|_{[0,1]}$ is close to $\gamma$ in the $\Cont^0$ topology.
\end{itemize}
Define $\alpha_1:=\rho\circ\alpha_0:\R\to\R^m$ (where $\rho$ is the Nash retraction provided in {\sc Step 3}), which is a Nash path that satisfies:
\begin{itemize}
\item[(i)] $\alpha_1([0,1])\subset\bigcup_{i=1}^r\Ss_i'\cup\{p_1',\ldots,p_r',q_1',\ldots,q_{r-1}'\}$.
\item[(ii)] $\alpha_1(t_i)=p_i'$ for $i=1,\ldots,r$.
\item[(iii)] $\alpha_1((t_i,s_i))\subset\Ss_i'$, $\alpha_1((s_i,t_{i+1}))\subset\Ss_{i+1}'$ and $\alpha_1(s_i)=q_i'$ for $i=1,\ldots,r-1$.
\item[(iv)] $\alpha_1|_{[0,1]}$ is close to $\rho\circ\gamma=\gamma$ in the $\Cont^0$ topology (Lemma \ref{cont}).
\end{itemize}
Next define $\alpha:=f\circ\alpha_1:\R\to\R^n$, which is a Nash path that satisfies:
\begin{itemize}
\item[(i)] $\alpha([0,1])\subset\bigcup_{i=1}^r\Ss_i\cup\{p_1,\ldots,p_r,q_1,\ldots,q_{r-1}\}$.
\item[(ii)] $\alpha(t_i)=p_i$ for $i=1,\ldots,r$.
\item[(iii)] $\alpha((t_i,s_i))\subset\Ss_i$, $\alpha((s_i,t_{i+1}))\subset\Ss_{i+1}$ and $\alpha(s_i)=q_i$ for $i=1,\ldots,r-1$.
\item[(iv)] $\alpha|_{[0,1]}$ is close to $f\circ\gamma=\beta$ in the $\Cont^0$ topology (Lemma \ref{cont}),
\end{itemize}
as required.
\end{proof}

We revisit next a well-known characterization of the connexion by analytic paths for semialgebraic sets. This result was proved indirectly in \cite[Main Thm.1.4]{f1} showing that the corresponding two properties are both equivalent to the fact that the involved semialgebraic set is the image of some $\R^d$ under a Nash map.

\begin{cor}\label{newproof}
Let $\Ss\subset\R^n$ be a semialgebraic set of dimension $d$. The following conditions are equivalent:
\begin{itemize}
\item[(i)] $\Ss$ is connected by analytic paths.
\item[(ii)] $\Ss$ is pure dimensional and there exists an analytic path $\alpha:[0,1]\to\Ss$ whose image meets all the connected components of $\Reg(\Ss)$.
\end{itemize}
\end{cor}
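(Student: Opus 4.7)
The plan is to establish the two implications separately, deducing (i) $\Rightarrow$ (ii) from Corollary \ref{main} and deducing (ii) $\Rightarrow$ (i) by showing that the graph $\Lambda$ introduced in \S\ref{graph} is connected and then invoking the alternative argument presented in \S\ref{alternative}.

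For (i) $\Rightarrow$ (ii), Theorem \ref{ni} identifies a semialgebraic set connected by analytic paths with a Nash image of some $\R^d$, which in turn is pure dimensional of dimension $d$ by the results in \cite{f1}. The regular locus $\Reg(\Ss)$ decomposes as a finite disjoint union of connected Nash manifolds $\Tt_1,\ldots,\Tt_m$, so one selects a point $y_k\in\Tt_k$ for each $k$ together with control times $0<\tau_1<\cdots<\tau_m<1$, and invokes Corollary \ref{main} to obtain a Nash path $\alpha:[0,1]\to\Ss$ with $\alpha(\tau_k)=y_k$. Since Nash maps are analytic, $\alpha$ is the required analytic path whose image meets all components of $\Reg(\Ss)$.

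For (ii) $\Rightarrow$ (i), the aim is to show that the graph $\Lambda$ of \S\ref{graph} (vertices the components $\Tt_k$ of $\Reg(\Ss)$, edges given by existence of Nash bridges inside $\Ss$) is connected. Granted this, the argument of \S\ref{alternative} applies Main Theorem \ref{nashsmart} along any path in $\Lambda$ and the classical curve selection lemma at the endpoints, producing a Nash path inside $\Ss$ through any two prescribed points, whence (i). To prove $\Lambda$ connected, fix $\Tt_i,\Tt_j$ and choose $t_i<t_j$ with $\alpha(t_i)\in\Tt_i$, $\alpha(t_j)\in\Tt_j$. Let $Y\subset\R^n$ be the Zariski closure of $\cl(\Ss)\setminus\Reg(\Ss)$, which has dimension strictly less than $d$, and assume (via the main obstacle discussed below) that $\alpha$ is not locally contained in $Y$. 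The Identity Principle then forces $\alpha^{-1}(Y)\cap[t_i,t_j]$ to be a finite set, and since $\Ss\setminus Y\subset\Reg(\Ss)$ the restriction $\alpha|_{[t_i,t_j]}$ visits a finite chain of components $\Tt_i=\Tt_{k_0},\Tt_{k_1},\ldots,\Tt_{k_r}=\Tt_j$ of $\Reg(\Ss)$ with transition points $s\in\alpha^{-1}(Y)$ satisfying $\alpha(s)\in\cl(\Tt_{k_p})\cap\cl(\Tt_{k_{p+1}})$. By \cite[Main Thm.1.1 \& Prop.7.6]{f1} (invoked in Step 1, Case 2 of the proof of Main Theorem \ref{nashsmart}), on a sufficiently small neighborhood $U$ of $\alpha(s)$ the set $(\Tt_{k_p}\cap U)\cup\{\alpha(s)\}\cup(\Tt_{k_{p+1}}\cap U)$ is connected by analytic paths, so Theorem \ref{ni} produces a Nash arc through $\alpha(s)$ with one branch in $\Tt_{k_p}$ and the other in $\Tt_{k_{p+1}}$, that is, an edge of $\Lambda$. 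Concatenation yields the desired path in $\Lambda$ from $\Tt_i$ to $\Tt_j$.

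The main obstacle is to ensure that $\alpha$ really does execute clean binary transitions $\alpha(s)\in\cl(\Tt_{k_p})\cap\cl(\Tt_{k_{p+1}})$, rather than dwelling for intervals of positive length inside the lower-dimensional stratum $\Ss\setminus\Reg(\Ss)$, in which case no single receiving component is well-defined at the exit. This is controlled through the Identity Principle applied to $\alpha$ against the algebraic set $Y$; in the pathological case where $\alpha$ is locally contained in $Y$ one first replaces $\alpha$ by a small $\Cont^\ell$-perturbation produced through Bernstein polynomials (Theorem \ref{cotasi}), which preserves the property of meeting every component of $\Reg(\Ss)$ while being generic with respect to $Y$; after this reduction the previous argument applies.
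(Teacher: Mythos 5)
Your overall architecture matches the paper's: (i)$\Rightarrow$(ii) via a curve through one point of each component of $\Reg(\Ss)$, and (ii)$\Rightarrow$(i) by proving the graph $\Lambda$ connected and then invoking Main Theorem \ref{nashsmart}. Two small divergences are harmless: for (i)$\Rightarrow$(ii) the paper proves pure dimensionality directly by a short Identity Principle argument rather than quoting it from \cite{f1}, and for (ii)$\Rightarrow$(i) the paper orders the components by $t_i:=\inf(\alpha^{-1}(\Ss_i))$ and reads a bridge off $\alpha$ at each $t_i$, which is more economical than fixing two components and tracking a chain of transitions (your detour through \cite[Main Thm.1.1 \& Prop.7.6]{f1} at each transition point is also unnecessary: the restriction of $\alpha$ to a small interval around the transition time is already, by definition, a Nash bridge).

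The genuine gap is your treatment of the degenerate case where $\alpha$ is locally (hence, by the Identity Principle, globally) contained in the algebraic set $Y$ of dimension $\leq d-1$. You propose to replace $\alpha$ by a small $\Cont^\ell$-perturbation obtained from Bernstein polynomials and declare it ``generic with respect to $Y$.'' This fails for two reasons. First, a perturbation of $\alpha$ in the ambient $\R^n$ has no reason to take values in $\Ss$: when $d<n$ any nontrivial perturbation generically leaves $\Ss$ altogether, and even when $d=n$ it can exit through the boundary. Keeping approximants inside the target set is precisely the central difficulty of this paper; the machinery built to achieve it (Lemma \ref{clue}, Lemma \ref{modification}) is an elaborate surgery using Nash bridges and the results of \cite{f1}, not a generic nudge, and Lemma \ref{modification} moreover presupposes a path that already has the bridge structure you are trying to establish. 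Second, Theorem \ref{cotasi} is a deterministic approximation statement about the Bernstein operator; it produces one specific polynomial, not a family from which one could extract a member transverse to $Y$, so ``genericity with respect to $Y$'' is not something it can deliver, and the perturbed path would in any case be polynomial rather than a reparametrization of an analytic path in $\Ss$ meeting every $\Tt_k$. Since the degenerate case is not vacuous (an analytic curve can lie inside a $(d-1)$-dimensional semialgebraic subset of a component $\Tt_k$ when $d\geq2$ while still touching every component), your argument for (ii)$\Rightarrow$(i) is incomplete at exactly the point you yourself identify as the main obstacle.
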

\begin{proof}
Let $\Ss_1,\ldots,\Ss_\ell$ be the connected components of $\Reg(\Ss)$, which are pairwise disjoint. Let $\Lambda$ be the graph proposed in Remark \ref{graph} whose vertices are the Nash manifolds $\Ss_1,\ldots,\Ss_\ell$ and such that there exists an edge between the vertices $\Ss_i$ and $\Ss_j$ if and only if there exists a Nash bridge inside $\Ss$ between $\Ss_i$ and $\Ss_j$. When $\Lambda$ is a connected graph, there exists a sequence of semialgebraic sets $\Tt_1,\ldots,\Tt_r$ such that $\{\Ss_1,\ldots,\Ss_\ell\}=\{\Tt_1,\ldots,\Tt_r\}$ and for each index $i=1,\ldots,r-1$ there exists a Nash bridge inside $\Ss$ between $\Tt_i$ and $\Tt_{i+1}$.

(i) $\Longrightarrow$ (ii) We prove first that $\Ss$ is pure dimensional. Otherwise, there exists a point $x\in\Ss$ and an open semialgebraic neighborhood $U\subset\R^n$ of $x$ such that $\dim(\Ss\cap U)<\dim(\Ss)$. Let $Y$ be the Zariski closure of $\Ss\cap U$ and pick a point $y\in\Ss\setminus Y$, which is non-empty because $\dim(Y)<\dim(\Ss)$. As $\Ss$ is connected by analytic paths, there exists an analytic path $\alpha:[0,1]\to\Ss$ such that $\alpha(0)=x$ and $\alpha(1)=y$. The inverse image $V:=\alpha^{-1}(\Ss\cap U)$ is an open semialgebraic subset of $[0,1]$ that contains $0$. Let $f\in\R[\x]$ be a polynomial equation of $Y$. As $(f\circ\alpha)|_V=0$ and $[0,1]$ is connected, the identity principle for analytic functions implies that $f\circ\alpha=0$, so $f(y)=0$ and $y\in Y$, which is a contradiction. Thus, $\Ss$ is pure dimensional.

By Lemma \ref{graphconnected} we know that $\Lambda$ is a connected graph. Pick points $x_i\in\Tt_i$ for $i=1,\ldots,r$. By Main Theorem \ref{nashsmart} there exists a Nash path $\alpha:[0,1]\to\Ss$ such that $\alpha(\frac{k}{r+1})=x_k$ for $k=1,\ldots,r$. Thus $\alpha:[0,1]\to\Ss$ is an analytic path that meets all the connected components of $\Reg(\Ss)$.

(ii) $\Longrightarrow$ (i) We prove next recursively that: {\em $\Lambda$ is a connected graph}. It is enough: {\em to reorder recursively the indices $i=1,\ldots,\ell$ in such a way that for each $i=2,\ldots,\ell$ there exists a Nash bridge inside $\Ss$ between $\Ss_i$ and some $\Ss_j$ with $1\leq j\leq i-1$}. 

Define $t_i:=\inf(\alpha^{-1}(\Ss_i))$ for $i=1,\ldots,\ell$. As each $\alpha^{-1}(\Ss_i)$ is an open semialgebraic subset of $[0,1]$ and $\Ss_i\cap\Ss_j=\varnothing$ if $i\neq j$, we deduce $t_i\neq t_j$ if $i\neq j$. We reorder the indices $i=1,\ldots,\ell$ in such a way that $i<j$ if $t_i<t_j$. There exists $\veps>0$ such that $\alpha((t_i-\veps,t_i))\subset\Ss_j$ for some $1\leq j<i$ and $\alpha((t_i,t_i+\veps))\subset\Ss_i$ for each $i=2,\ldots,\ell$. Consequently, there exists a Nash bridge inside $\Ss$ between $\Ss_i$ and some $\Ss_j$ with $1\leq j\leq i-1$ for $i=2,\ldots,\ell$.

Choose a sequence of semialgebraic sets $\Tt_1,\ldots,\Tt_r$ such that $\{\Ss_1,\ldots,\Ss_\ell\}=\{\Tt_1,\ldots,\Tt_r\}$ and for each index $i=1,\ldots,r-1$ there exists a Nash bridge between $\Tt_i$ and $\Tt_{i+1}$. As $\Ss$ is pure dimensional, $\Ss=\cl(\Reg(\Ss))\cap\Ss=\bigcup_{i=1}^r\cl(\Tt_i)\cap\Ss$. If $x,y\in\Ss$, there exist indices $i,j$ such that $x\in\cl(\Tt_i)$ and $y\in\cl(\Tt_j)$. We may assume $i<j$ and we pick points $x_k\in\Tt_k$ for $k=i+1,\ldots,j-1$ and write $x_i:=x$ and $x_j:=y$. By Main Theorem \ref{nashsmart} there exists a Nash path $\alpha:[0,1]\to\Ss$ such that $\alpha(0)=x$ and $\alpha(1)=y$. Thus, $\Ss$ is connected by Nash paths and consequently by analytic paths, as required.
\end{proof}

\section{Polynomial paths inside piecewise linear semialgebraic sets}\label{s4}

In this section we prove Main Theorem \ref{plcase}, that is, we revisit Main Theorem \ref{nashsmart} for the piecewise linear (PL) case: {\em the involved semialgebraic sets are the interiors of convex polyhedra of dimension $n$}. Due to the maximality of the dimension of the convex polyhedra, we are under the hypothesis of Theorem \ref{smart} and the obtained `smart' path can be chosen polynomial. In order to get better bounds for the degrees of these polynomial paths: (1) we state a (polynomial) curve selection lemma for convex polyhedra that involves degree $3$ cuspidal curves (Lemma \ref{cuspidal}), and (2) we prove that the simplex polynomial paths that connect two convex polyhedra (whose union is connected by analytic paths) are moment curves (Theorem \ref{mc}).

\subsection{Double Nash curve selection lemma for PL semialgebraic sets.}
In order to lighten the presentation we first find a simplified version of Lemma \ref{doublecurve} for convex polyhedra (Figure \ref{fig3}). Denote $\R[\x]:=\R[\x_1,\ldots,\x_n]$. Given a polynomial $h\in\R[\x]$ of degree $1$, denote $\vec{h}:=h-h(0)$, which is a linear form.

\begin{lem}[Cuspidal curve]\label{cuspidal}
Let $\pol\subset\R^n$ be an $n$-dimensional convex polyhedron and let $p\in\pol$. Assume that $p$ is the origin and the point $e_1:=(1,0,\ldots,0)\in\Int(\pol)$. Consider the polynomial map $\alpha:\R\to\R^n,\ t\mapsto(t^2,t^3,0,\ldots,0)$. Then there exists $\veps>0$ such that $\alpha([-\veps,\veps])\subset\Int(\pol)\cup\{p\}$.
\end{lem}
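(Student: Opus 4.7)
\textbf{Proof plan for Lemma \ref{cuspidal}.}

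The plan is to write $\pol$ as an intersection of closed half-spaces and then check, half-space by half-space, that $\alpha(t)$ lands in the \emph{open} side for all small $t\neq 0$. Concretely, since $\pol$ is an $n$-dimensional convex polyhedron, there exist polynomials $h_1,\dots,h_r\in\R[\x]$ of degree $1$ such that $\pol=\bigcap_{i=1}^r\{h_i\geq 0\}$ and moreover (since $\pol$ is full-dimensional and we may take a minimal description) $\Int(\pol)=\bigcap_{i=1}^r\{h_i>0\}$. Because $p=0\in\pol$ we have $h_i(0)\geq 0$ for every $i$, so it suffices to find $\veps>0$ such that $h_i(\alpha(t))>0$ for all $i$ and all $t\in[-\veps,\veps]\setminus\{0\}$.

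I would split the indices into two groups. For those $i$ with $h_i(0)>0$, the map $t\mapsto h_i(\alpha(t))$ is continuous and strictly positive at $t=0$, so it stays positive on a neighborhood of $0$; a compactness/finiteness argument selects a common $\veps_1>0$. For the remaining indices, $h_i(0)=0$ means $h_i=\vec{h_i}$ is a linear form, say $h_i=a_{i1}\x_1+a_{i2}\x_2+\cdots+a_{in}\x_n$. Here the hypothesis $e_1\in\Int(\pol)$ is used crucially: evaluating, $a_{i1}=h_i(e_1)>0$. Substituting the cusp $\alpha(t)=(t^2,t^3,0,\dots,0)$ gives
\[
h_i(\alpha(t))=a_{i1}t^2+a_{i2}t^3=t^2(a_{i1}+a_{i2}\,t),
\]
and since $a_{i1}>0$ the bracket is positive for $|t|<a_{i1}/(|a_{i2}|+1)$, so $h_i(\alpha(t))>0$ for all $t\neq 0$ in a small enough interval. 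Again, taking the minimum over the finitely many such $i$ yields a uniform $\veps_2>0$.

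Setting $\veps:=\min\{\veps_1,\veps_2\}$, we conclude $\alpha((-\veps,\veps)\setminus\{0\})\subset\Int(\pol)$; together with $\alpha(0)=0=p$ this gives $\alpha([-\veps,\veps])\subset\Int(\pol)\cup\{p\}$, as required.

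There is no real obstacle here; the only point that could be mis-stated is the matching of defining inequalities with the strict inequalities for the interior, which is why I would insist on a minimal description of $\pol$. The geometric content is exactly that the cuspidal curve $(t^2,t^3)$ has contact of order $2$ with every supporting hyperplane of $\pol$ through $0$ whose normal has a nonzero $\x_1$-component, and the positivity of that component (forced by $e_1\in\Int(\pol)$) makes the quadratic leading term $a_{i1}t^2$ dominate the cubic perturbation $a_{i2}t^3$ on a small symmetric interval around $0$.
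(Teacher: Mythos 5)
Your proof is correct and follows essentially the same route as the paper: reduce to the finitely many degree-one inequalities defining $\pol$, use continuity when $h_i(0)>0$, and when $h_i(0)=0$ use $h_i(e_1)>0$ (forced by $e_1\in\Int(\pol)$) so that the $t^2$ term dominates the $t^3$ term. The only cosmetic difference is that you invoke a minimal description to identify $\Int(\pol)$ with $\bigcap\{h_i>0\}$, which is unnecessary since $\bigcap\{h_i>0\}$ is always an open subset of $\pol$ and hence contained in $\Int(\pol)$.
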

\begin{center}
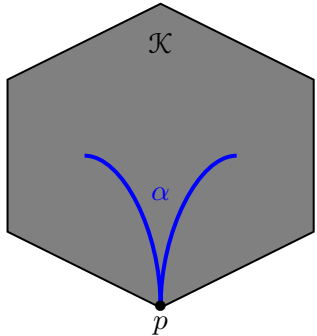
\begin{figure}[ht]
\begin{tikzpicture}[scale=1]
\draw[color=black,line width=1.5pt] (2,0) -- (4,1) -- (4,3) -- (2,4) -- (0,3) -- (0,1) -- (2,0);
\draw[fill=gray!100,opacity=0.5,draw=none] (2,0) -- (4,1) -- (4,3) -- (2,4) -- (0,3) -- (0,1) -- (2,0);

\draw[color=blue,line width=1.5pt] (1,2) .. controls (1.5,2) and (2,1) .. (2,0) .. controls (2,1) and (2.5,2) .. (3,2);
\draw (2,0) node{$\bullet$};
\draw (2,3.5) node{$\pol$};
\draw (2,1.5) node{{\color{blue}$\alpha$}};
\draw (2,-0.25) node{$p$};

\end{tikzpicture}
\caption{Cuspidal curve of Lemma \ref{cuspidal}.\label{fig3}}
\vspace{-1.75em}
\end{figure}
\end{center}

\begin{proof}
Let $h_1,\ldots, h_m\in\R[\x]$ be polynomials of degree $1$ such that $\pol:=\{h_1\geq0,\ldots,h_m\geq0\}$. As $e_1\in\Int(\pol)$, we have $h_k(e_1)>0$ for $k=1,\ldots,m$. Write $\vec{h}_k:=h_k(\x)-h_k(0)$, which is a linear form. Observe that
\begin{align*}
h_k(e_1)&=h_k(0)+\vec{h}_k(e_1)>0,\\
h_k(t^2,t^3,0,\ldots,0)&=h_k(0)+t^2\vec{h}_k(1,t,0,\ldots,0).
\end{align*}
We distinguish two cases:

\noindent{\sc Case 1.} $\vec{h}_k(e_1)>0$ (and $h_k(0)\geq0$). As $f_k:\R\to\R,\ t\mapsto\vec{h}_k(1,t,0,\ldots,0)=\vec{h}_k(e_1)+t\vec{h}_k(0,1,0,\ldots,0)$ is continuous and $f_k(0)=\vec{h}_k(e_1)>0$, there exists $\veps_k>0$ such that if $|t|<\veps_k$, then $f_k(t)>0$. As $h_k(0)\geq0$,
$$
h_k(t^2,t^3,0,\ldots,0)=h_k(0)+t^2\vec{h}_k(1,t,0,\ldots,0)>0\quad\text{if $0<|t|<\veps_k$.}
$$

\noindent{\sc Case 2.} $\vec{h}_k(e_1)\leq0$. Then $h_k(0)>0$. As $g_k:\R\to\R,\ t\mapsto h_k(0)+t^2\vec{h}_k(1,t,0,\ldots,0)$ is continuous and $h_k(0)>0$, there exists $\veps_k>0$ such that if $|t|<\veps_k$, then $h_k(t^2,t^3,0,\ldots,0)>0$.

To finish it is enough to take $\veps:=\min\{\veps_1,\ldots,\veps_m\}>0$.
\end{proof}

\subsection{Moment bridges between convex polyhedra.}
We analyze next the structure of the simplest possible Nash bridges between two convex polyhedra such that their union is a semialgebraic set connected by analytic paths and, surprisingly, moment curves appear (Figure \ref{fig4}). 

\begin{thm}[Moment curves]\label{mc}
Let $\pol_1,\pol_2\subset\R^n$ be $n$-dimensional convex polyhedra such that $0\in\pol_1\cap\pol_2$ and $\Int(\pol_1)\cap\Int(\pol_2)=\varnothing$. Assume that there exists a Nash arc $\alpha:[-1,1]\to\pol_1\cup\pol_2$ such that $\alpha(0)=0$, $\alpha([-1,0))\subset\Int(\pol_1)$ and $\alpha((0,1])\subset\Int(\pol_2)$. Then there exist $e=1,2$, an integer $e\leq d\leq n$ and $\veps>0$ such that after an affine change of coordinates in $\R^n$ the polynomial arc $\beta:=(\beta_1,\ldots,\beta_n):[-\veps,\veps]\to\pol_1\cup\pol_2$ satisfies $\beta(0)=0$, 
$$
\beta_k(t)=\begin{cases}
t^{e+k-1}&\text{if $k=1,\ldots,d$,}\\ 
0&\text{if $k=d+1,\ldots,n$,}
\end{cases}
$$
$\beta([-\veps,0))\subset\Int(\pol_1)$ and $\beta((0,\veps])\subset\Int(\pol_2)$. 
\end{thm}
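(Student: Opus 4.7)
The plan is to extract from the Nash arc $\alpha$ the essential local data (leading exponent, span of its coefficients, and the transverse versus cuspidal dichotomy) and then to construct the polynomial moment curve $\beta$ via a careful choice of an adapted affine frame. First, expand $\alpha(t)=\sum_{i\geq e_1}c_i\,t^i$ as its convergent Nash series at $0$ with $c_{e_1}\neq 0$, and set $e:=1$ if $e_1$ is odd and $e:=2$ if $e_1$ is even. The motivation is that when $e_1$ is odd the two branches of $\alpha$ at $0$ point in opposite directions $\pm c_{e_1}$ (transverse case, matching $e=1$), whereas when $e_1$ is even both branches point into the ray $\R_{\geq 0}c_{e_1}$ (cuspidal case, matching $e=2$); the moment curve in the statement exhibits the same qualitative behaviour at $0$. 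Note that when $e_1$ is even, the disjointness $\Int(\pol_1)\cap\Int(\pol_2)=\varnothing$ forces at least one coefficient $c_i$ of odd index, since otherwise $\alpha(-t)=\alpha(t)$ would put both branches into the same open polyhedron.

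Next, let $V:=\operatorname{span}\{c_i:i\geq e_1\}\subset\R^n$ and $d:=\dim V$. Because $\alpha(t)\in V$ near $0$ and $\alpha$ meets $\Int(\pol_j)$ for $j=1,2$, the slice $\pol_j\cap V$ has full dimension $d$ in $V$. An affine change of coordinates of $\R^n$ brings $V$ to $\R^d\times\{0\}^{n-d}$, forcing the last $n-d$ components of the sought $\beta$ to vanish.

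The core of the proof is the construction of a basis $v_1,\dots,v_d$ of $V$ such that, in the coordinates where $v_k$ is the $k$-th standard basis vector, the polynomial arc
\[
\beta(t):=v_1\,t^e+v_2\,t^{e+1}+\cdots+v_d\,t^{e+d-1}
\]
satisfies $\beta([-\veps,0))\subset\Int(\pol_1)$ and $\beta((0,\veps])\subset\Int(\pol_2)$ for some $\veps>0$. For every linear form $\vec{h}=h-h(0)$ with $h\geq 0$ defining a facet of $\pol_j$ through $0$, expand
\[
\vec{h}(\beta(t))=\sum_{k=1}^d\vec{h}(v_k)\,t^{e+k-1};
\]
the sign of this polynomial near $0$ is dictated by the first nonzero coefficient $\vec{h}(v_{k_0})$ and the parity of $e+k_0-1$, so the containment requirements translate into $\operatorname{sign}(\vec{h}(v_{k_0}))=(-1)^{e+k_0-1}$ for facets of $\pol_1$ and $\operatorname{sign}(\vec{h}(v_{k_0}))=+1$ for facets of $\pol_2$. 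The $v_k$'s are chosen inductively: $v_1$ is taken in the convex cone of those $v\in V$ with $\vec{h}(v)\leq 0$ on every facet form of $\pol_1$ through $0$ and $\vec{h}(v)\geq 0$ on every facet form of $\pol_2$ through $0$ (resp.\ $\vec{h}(v)\geq 0$ on all such facet forms in the cuspidal case), as far in the relative interior as the coefficients $c_i$ allow; each subsequent $v_k\in V\setminus\operatorname{span}(v_1,\dots,v_{k-1})$ is then chosen so that the sign condition above holds on every facet form not yet activated by $v_1,\dots,v_{k-1}$. Existence of such $v_k$ at every step is guaranteed because the coefficients of $\alpha$ already witness the analogous conditions: the inclusion $\alpha(t)\in\Int(\pol_j)$ forces $\vec{h}(c_m)(-1)^m>0$ or $\vec{h}(c_m)>0$ at the leading index $m=\min\{i:\vec{h}(c_i)\neq 0\}$, and by the parity remark of the first paragraph one can combine the $c_i$'s to produce a $v_k$ of the required parity class.

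With the basis in place, the leading term of each $\vec{h}(\beta(t))$ has the prescribed sign, and a common $\veps>0$ coming from the finitely many facets (facets of $\pol_j$ not through $0$ being automatically satisfied since $\beta(0)=0$) yields the required containments. The main obstacle is the inductive basis construction in the previous paragraph: the sign and parity conditions coming from facets of both $\pol_1$ and $\pol_2$ must be met coherently at every step, and this is where the existence of the Nash arc $\alpha$ enters most essentially.
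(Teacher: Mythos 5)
Your reduction of the containment conditions to sign constraints is correct: for each degree-one facet form $h$ of $\pol_j$ through the origin one has $h=\vec{h}$, the sign of $\vec{h}(\beta(t))$ near $0$ is governed by the first index $k_0$ with $\vec{h}(v_{k_0})\neq0$, and the requirements become $\vec{h}(v_{k_0})>0$ for $\pol_2$ and $\sign(\vec{h}(v_{k_0}))=(-1)^{e+k_0-1}$ for $\pol_1$ (facets not through $0$ being handled by continuity). Your choice of $e$ from the parity of the leading exponent also matches the paper. But the entire content of the theorem is the existence of a basis $v_1,\ldots,v_d$ realizing all of these constraints simultaneously, and that is exactly the step you assert rather than prove. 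The sentence ``each subsequent $v_k$ is then chosen so that the sign condition holds on every facet form not yet activated'' begs the question: a facet form left unactivated by $v_1,\ldots,v_{k-1}$ must either be activated at step $k$ with a sign dictated by the parity of $e+k-1$, or be postponed again, and nothing in your argument rules out that two surviving facet forms demand incompatible parities at the same step, or that a form whose ``correct'' parity is even is forced to activate at an odd step. Saying that $v_1$ is taken ``as far in the relative interior as the coefficients $c_i$ allow'' and that ``one can combine the $c_i$'s to produce a $v_k$ of the required parity class'' is not a construction; it is a restatement of the claim.

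For comparison, the paper's proof is devoted entirely to this point. It first reduces (Lemma \ref{nashpol}) to the leading monomial arc $t\mapsto(a_1t^{k_1},\ldots,a_st^{k_s})$ in adapted coordinates, then recursively lowers the exponents to $e,e+1,e+2,\ldots$ by a parity case analysis: when $k_{\ell+1}-(e+\ell-1)$ is odd the next exponent can be dropped to $e+\ell$ directly (using that the quotient exponent is even), but when it is even the next coordinate must be \emph{merged} into the previous one via a small perturbation $\eta t^{e+\ell-1}$ followed by a linear change of coordinates — which in particular can strictly decrease the dimension of the span. This last phenomenon also shows that your a priori choice $d:=\dim V$ (the span of all Taylor coefficients of $\alpha$) is not justified: the moment curve produced by the theorem may span a proper subspace of $V$, and you would need to prove separately that a basis of all of $V$ with the required activation pattern exists, which is not clear. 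So the proposal correctly identifies the target but leaves the central inductive construction — the actual proof — as a gap.
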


To prove Theorem \ref{mc} we need a preliminary result. Given a non-zero power series $\zeta:=\sum_{k\geq0}a_k\t^k\in\R[[\t]]$, we denote its {\em order with respecto to $\t$} with $\omega(\zeta):=\min\{k\geq0:\ a_k\neq0\}$. For completeness $\omega(0):=+\infty$.

\begin{lem}\label{nashpol}
Let $\pol\subset\R^n$ be an $n$-dimensional convex polyhedron that contains the origin and let $\alpha:=(\alpha_1,\ldots,\alpha_n):[-1,1]\to\R^n$ be a Nash arc such that $\alpha(0)=0$ and $\alpha((0,1])\subset\Int(\pol)$. Assume $k_i:=\omega(\alpha_i)<\omega(\alpha_{i+1})=:k_{i+1}$ for $i=1,\ldots,n$ and write $\alpha_i:=\t^{k_i}(a_i+\t\gamma_i)$ where $a_i\in\R\setminus\{0\}$ and $\gamma_i$ is a Nash series. Then the monomial map $\beta:=(\beta_1,\ldots,\beta_n):\R\to\R^n,\ t\mapsto(a_1t^{k_1},\ldots,a_nt^{k_n})$ satisfies $\beta((0,\veps])\subset\Int(\pol)$ for some $\veps>0$.
\end{lem}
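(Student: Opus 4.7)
The plan is to express $\pol$ as an intersection of half-spaces and to compare, facet by facet, the leading terms of $h_j\circ\alpha$ and $h_j\circ\beta$ at $t=0$, using the hypothesis that the orders $k_1<\cdots<k_n$ are pairwise distinct so that no cancellation can occur in the lowest-order monomial.

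First I would fix a non-redundant presentation $\pol=\{h_1\geq0,\ldots,h_m\geq0\}$ with each $h_j\in\R[\x]$ of degree one, so that $\Int(\pol)=\{h_1>0,\ldots,h_m>0\}$. Writing $h_j(\x)=h_j(0)+\sum_{i=1}^n c_{ji}\x_i$, the hypothesis $\alpha((0,1])\subset\Int(\pol)$ gives $h_j(\alpha(t))>0$ for all $t\in(0,1]$ and all $j=1,\ldots,m$. It then suffices to prove that, for each $j$, there is $\veps_j>0$ with $h_j(\beta(t))>0$ on $(0,\veps_j]$, and to take $\veps:=\min_j\veps_j$.

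Next I would split into two cases. If $h_j(0)>0$, continuity yields both $h_j(\alpha(t))\to h_j(0)>0$ and $h_j(\beta(t))\to h_j(0)>0$ as $t\to0^+$, so positivity on some $(0,\veps_j]$ is automatic. The informative case is $h_j(0)=0$, i.e.\ when the origin lies on the facet $h_j=0$, and here the assumption on the orders is decisive. Expanding,
\[
h_j(\alpha(t))=\sum_{i=1}^n c_{ji}\,t^{k_i}(a_i+t\gamma_i(t)),\qquad h_j(\beta(t))=\sum_{i=1}^n c_{ji}a_i t^{k_i}.
\]
Because the exponents $k_1<\cdots<k_n$ are pairwise distinct and each $a_i\neq0$, the order in $\t$ of the first series equals $k_{i_0}$, where $i_0=i_0(j):=\min\{i:c_{ji}\neq0\}$ (this minimum exists, for otherwise $h_j\circ\alpha\equiv0$, contradicting strict positivity on $(0,1]$), with leading coefficient $c_{j,i_0}a_{i_0}$. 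Strict positivity of $h_j\circ\alpha$ on $(0,1]$ forces $c_{j,i_0}a_{i_0}>0$. The monomial series $h_j\circ\beta$ has the \emph{same} leading monomial $c_{j,i_0}a_{i_0}t^{k_{i_0}}$, so $h_j(\beta(t))>0$ on some $(0,\veps_j]$.

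Taking $\veps:=\min_{1\leq j\leq m}\veps_j>0$ then gives $\beta((0,\veps])\subset\bigcap_{j=1}^m\{h_j>0\}=\Int(\pol)$, as required. There is no serious obstacle; the only point to notice is that the strict monotonicity $k_1<\cdots<k_n$ prevents any cancellation among the lowest-order contributions of the different coordinates, so the leading-order behavior of every facet-linear form along $\alpha$ is already captured by the monomial parameterization $\beta$.
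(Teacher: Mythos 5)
Your proposal is correct and follows essentially the same route as the paper: write $\pol$ as $\{h_1\geq0,\ldots,h_m\geq0\}$ with degree-one $h_j$, dispose of the facets with $h_j(0)>0$ by continuity, and for $h_j(0)=0$ observe that the strictly increasing orders $k_1<\cdots<k_n$ force $h_j\circ\alpha$ and $h_j\circ\beta$ to share the same leading monomial $c_{j,i_0}a_{i_0}\t^{k_{i_0}}$, whose coefficient must be positive. Your explicit remark that the minimal index $i_0$ exists (else $h_j\circ\alpha\equiv0$, contradicting strict positivity) is a small point the paper leaves implicit.
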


\begin{center}
\begin{figure}[ht]
\begin{tikzpicture}[scale=1]
\draw[color=black,line width=1.5pt] (0,0) -- (2,0) -- (0,2) -- (1,2) -- (2,3) -- (1,4) -- (0,4) -- (0,0);
\draw[color=black,line width=1.5pt] (4,0) -- (2,0) -- (4,2) -- (3,2) -- (2,3) -- (3,4) -- (4,4) -- (4,0);

\draw[fill=gray!100,opacity=0.5,draw=none] (0,0) -- (2,0) -- (0,2) -- (1,2) -- (2,3) -- (1,4) -- (0,4) -- (0,0);
\draw[fill=gray!100,opacity=0.5,draw=none](4,0) -- (2,0) -- (4,2) -- (3,2) -- (2,3) -- (3,4) -- (4,4) -- (4,0);

\draw[color=red,line width=1.5pt] (1,0.5) parabola bend (2,0) (3,0.5);
\draw[color=red,line width=1.5pt] (0.5,1) .. controls (-0.15,2) and (-0.15,2) .. (0.5,3);
\draw[color=red,line width=1.5pt] (3.5,1) .. controls (4.15,2) and (4.15,2) .. (3.5,3);
\draw[color=red,line width=1.5pt] (1.5,3) -- (2.5,3);

\draw[dashed,color=black,line width=0.75pt] (5,0) -- (5,4);

\draw (2,0) node{$\bullet$};
\draw (0,2) node{$\bullet$};
\draw (4,2) node{$\bullet$};
\draw (2,3) node{$\bullet$};
\draw (0.5,0.25) node{$\pol_1$};
\draw (3.5,0.25) node{$\pol_2$};
\draw (2,0.5) node{{\color{red}$\beta$}};

\draw[fill=gray!100,opacity=0.5,draw=none] (6,2) -- (8,2) -- (6,3.5) -- (6,2);
\draw[fill=gray!100,opacity=0.5,draw=none] (8,2) -- (10,1) -- (10.5,2.5) -- (8,2);

\draw[color=black,line width=1pt] (6,2) -- (6.5,2.5) -- (8,2);
\draw[color=black,line width=1pt] (6,3.5) -- (6.5,2.5); 
\draw[color=black,line width=1pt] (6,2) -- (6,3.5) -- (8,2) -- (6,2);
\draw[color=black,line width=1pt] (8,2) -- (10,2) -- (10.5,2.5) -- (8,2); 
\draw[color=black,line width=1pt] (8,2) -- (10,1) -- (10.5,2.5); 
\draw[dashed,color=black,line width=1pt] (10,2) -- (10,1);

\draw[color=red,line width=1.5pt] (7,2.25) parabola bend (8,2) (8,2);
\draw[color=red,line width=1.5pt] (9,1.65) parabola bend (8,2) (8,2);

\draw (8,2) node{$\bullet$};
\draw (7,3.25) node{$\pol_1$};
\draw (9.25,1) node{$\pol_2$};
\draw (8,2.5) node{{\color{red}$\beta$}};

\draw[dashed,color=black,line width=1pt] (11.5,0) -- (11.5,4);

\draw[color=black,line width=1.5pt] (14.5,2) -- (12.5,2) -- (14.5,0) -- (14.5,2);
\draw[color=black,line width=1.5pt] (14.5,2) -- (12.5,2) -- (14.5,4) -- (14.5,2);

\draw[fill=gray!100,opacity=0.5,draw=none] (14.5,2) -- (12.5,2) -- (14.5,0) -- (14.5,2);
\draw[fill=gray!100,opacity=0.5,draw=none] (14.5,2) -- (12.5,2) -- (14.5,4) -- (14.5,2);

\draw[color=red,line width=1.5pt] (12.5,2) parabola bend (12.5,2) (13.5,2.5);
\draw[color=red,line width=1.5pt] (12.5,2) parabola bend (12.5,2) (13.5,1.5);

\draw (12.5,2) node{$\bullet$};
\draw (14,1.25) node{$\pol_{21}$};
\draw (14,2.75) node{$\pol_1$};
\draw (12.5,2.5) node{{\color{red}$\beta$}};

\end{tikzpicture}
\caption{Moment curves of Theorem \ref{mc}.\label{fig4}}
\vspace{-1.75em}
\end{figure}
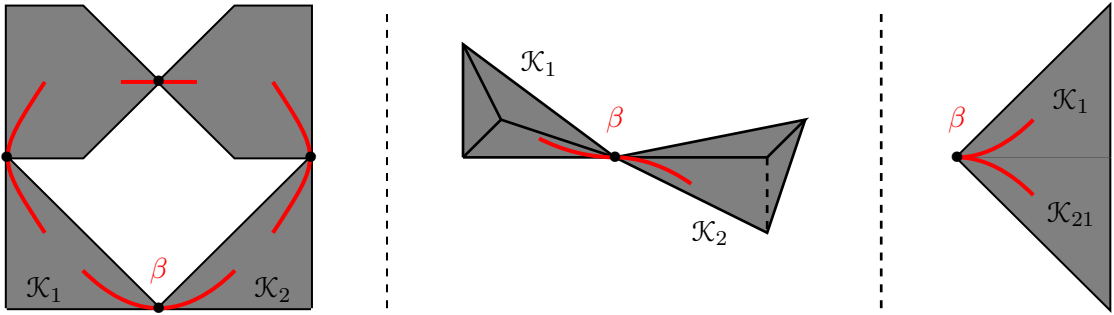
\end{center}

\begin{proof}
Write $\pol:=\{h_1\geq0,\ldots,h_m\geq0\}$ where $h_j\in\R[\x]$ are polynomials of degree one. As the origin belongs to $\pol$, we have $h_j(0)\geq0$. Write $\vec{h}_j:=h_j-h_j(0)$, where $\vec{h}_j$ is a linear form. Thus,
$$
h_j(\alpha_1,\ldots,\alpha_n)=h_j(0)+\vec{h}_j(\alpha_1,\ldots,\alpha_n).
$$
If $h_j(0)>0$, there exists $\veps_j>0$ such that $h_j(\beta_1(t),\ldots,\beta_n(t))>0$ if $0<t<\veps_j$, because 
$$
\vec{h}_j(\beta_1,\ldots,\beta_n)=\t^{k_1}\eta_j(\t)
$$
for some Nash series $\eta_j\in\R[[\t]]_{\rm alg}$. If $h_j(0)=0$, then $h_j=\vec{h}_j$. Write $h_j:=b_{jp_j}\x_{p_j}+\cdots+b_{jn}\x_n$ where $b_{jp_j}\neq0$. Then
\begin{align*}
h_j(\alpha_1,\ldots,\alpha_n)=h_j(\alpha_{p_j},\ldots,\alpha_n)=b_{jp_j}a_{p_j}\t^{k_{p_j}}(1+\t\tau_j),\\
h_j(\beta_1,\ldots,\beta_n)=h_j(\beta_{p_j},\ldots,\beta_n)=b_{jp_j}a_{p_j}\t^{k_{p_j}}(1+\t\theta_j),
\end{align*}
where $\tau_j,\theta_j\in\R[[\t]]_{\rm alg}$ are Nash series. As $h_j(\alpha_1,\ldots,\alpha_n)(t)>0$ for $0<t<1$, we deduce $b_{jp_j}a_{p_j}>0$, so there exists $\veps_j>0$ such that $h_j(\beta_1,\ldots,\beta_n)(t)>0$ if $0<t<\veps_j$.

To finish it is enough to take $\veps:=\min\{\veps_1,\ldots,\veps_m\}>0$.
\end{proof}

We are ready to prove Theorem \ref{mc}.

\begin{proof}[Proof of Theorem \em \ref{mc}]
The proof is conducted in several steps:

\noindent{\sc Step 0. Initial preparation.} As $\alpha:=(\alpha_1,\ldots,\alpha_n)$ is a Nash arc such that $\alpha(0)=0$, we may assume (after a linear change of coordinates) $\omega(\alpha_\ell)\leq\omega(\alpha_{\ell+1})$ for $\ell=1,\ldots,n-1$ and the previous inequality is strict if $\alpha_\ell\neq0$ for $\ell=1,\ldots,n-1$. Assume $\alpha_\ell=0$ exactly for $\ell=s+1,\ldots,n$. The tangent to $\alpha$ at $t=0$ is the line $\{\x_2=0,\ldots,\x_n=0\}$. Consider the intersections $\pol_i':=\pol_i\cap\{\x_{s+1}=0,\ldots,\x_n=0\}$, which are non-empty $s$-dimensional convex polyhedra for $i=1,2$ such that $\alpha([-\veps,0))\subset\Int(\pol_1')$ and $\alpha((0,\veps])\subset\Int(\pol_2')$. The previous assertion holds because $\alpha([-\veps,0))\subset\Int(\pol_1)$, $\alpha((0,\veps])\subset\Int(\pol_2)$ and $\alpha([-\veps,\veps])\subset\{\x_{s+1}=0,\ldots,\x_n=0\}$. By Lemma \ref{nashpol} and after a new linear change of coordinates we may assume $\alpha_\ell:=t^{k_\ell}$ for $\ell=1,\ldots,s$, $k_\ell<k_{\ell+1}$ for $\ell=1,\ldots,s-1$ and $\alpha_\ell=0$ for $\ell=s+1,\ldots,n$. 

Write $\pol_i:=\{h_{i1}\geq0,\ldots,h_{ir}\geq0\}$ where $h_{ij}\in\R[\x]$ are polynomials of degree $1$ and recall that $\Int(\pol_i)=\{h_{i1}>0,\ldots,h_{ir}>0\}$. 

\noindent{\sc Step 1. First modification of the Nash arc $\alpha$.}
We claim: \em we may assume $\omega(\alpha_1)$ is either $1$ (if $k_1$ is odd) or $2$ (if $k_1$ is even)\em.

As $\alpha(t)\subset\Int(\pol_i)$ for $(-1)^it>0$ small enough, each $h_{ij}(\alpha(t))>0$ if $(-1)^it>0$ is small enough for $i=1,2$. Write 
$$
e:=\begin{cases}
1&\text{if $k_1$ is odd,}\\
2&\text{if $k_1$ is even.}
\end{cases}
$$
We claim: {\em $\alpha^*:\R\to\R^n,\ t\mapsto(t^{e},t^{k_2-k_1+e},\ldots,t^{k_s-k_1+e},0,\ldots,0)$ is the monomial map we are looking for in this step.} Let us check: {\em $\alpha^*$ satisfies the inequalities defining $\Int(\pol_i)$ for $(-1)^it>0$ small enough and $i=1,2$. In addition, $h_{ij}(t^e,0,\ldots,0)\geq0$ for $(-1)^it>0$ if $i=1,2$ and $j=1,\ldots,r$.}

Fix any pair $(i,j)$. If $h_{ij}(0)>0$, there is nothing to prove, so we assume $h_{ij}(0)=0$. We have
$$
h_{ij}(\t^{k_1},\ldots,\t^{k_s},0,\ldots,0)=\t^{k_1-e}h_{ij}(\t^{e},\t^{k_2-k_1+e},\ldots,\t^{k_s-k_1+e},0,\ldots,0).
$$ 
As $k_1-e$ is even, $h_{ij}(t^{e},t^{k_2-k_1+e},\ldots,t^{k_s-k_1+e},0,\ldots,0)>0$ for $(-1)^it>0$ small enough. We deduce considering its Taylor expansion at $0$ that $h_{ij}(t^e,0,\ldots,0)\geq0$ for $(-1)^it>0$, because $k_\lambda-k_1>0$ for $\lambda=2,\ldots,s$. Thus, after substituting $\alpha$ by $\alpha^*$, we can suppose $\alpha:=(\t^{e},\t^{k_2'},\ldots,\t^{k_s'},0,\ldots,0)$, where $k_\lambda':=k_\lambda-k_1+e$ and $e<k_2'<\cdots<k_s'$. In the following we denote $k_\lambda'$ with $k_\lambda$ to lighten notation.

\noindent{\sc Step 2. Second modification of the Nash arc $\alpha$.}
Let us check next: {\em After a linear change of coordinates we may assume either $s=1$ and $e=1$ or $s\geq2$ and $k_2=e+1$\em}. 

Pick any pair $(i,j)$. If $h_{ij}(t^e,0,\ldots,0)>0$ for $(-1)^it>0$, there exists $\eta\in(0,1)$ (valid for each pair $(i,j)$ in this situation) such that if $(c_2,\ldots,c_n)\in\R^{n-1}$ and each $|c_k|\leq\eta$, then $h_{ij}(t^e,c_2t^e,\ldots,c_nt^e)>0$. Otherwise, $h_{ij}(t^e,0,\ldots,0)=0$ for $(-1)^it>0$, so $h_{ij}$ is a linear form that does not depend on $\x_1$. 

Next, we distinguish two cases:

\noindent{\sc Case 1}. {\em $k_2-e$ is even}. We check first: {\em If $\alpha:\R\to\R^n,\ t\mapsto(t^{e},t^{k_2},\ldots,t^{k_s},0,\ldots,0)$ is a monomial map such that $\alpha(t)\in\Int(\pol_i)$ for $(-1)^it>0$ small enough, then
$$
\alpha^*:\R\to\R^n,\ t\mapsto(t^{e},\eta t^{e},\eta t^{k_3-k_2+e},\ldots,\eta t^{k_s-k_2+e},0,\ldots,0)
$$ 
is a monomial map such that $\alpha^*(t)\in\Int(\pol_i)$ for $(-1)^it>0$ small enough}.

Pick any pair $(i,j)$. If $h_{ij}(t^e,0,\ldots,0)>0$ for $(-1)^it>0$, then
$$
h_{ij}(t^{e},\eta t^{e},\eta t^{k_3-k_2+e},\ldots,\eta t^{k_s-k_2+e},0,\ldots,0)>0
$$
for $0<(-1)^it<\eta$. If $h_{ij}$ is a linear form that does not depend on $\x_1$, then
\begin{multline*}
0<h_{ij}(\eta t^{k_2},\ldots,\eta t^{k_s},0,\ldots,0)=\eta t^{k_2-e}h_{ij}(t^e,t^{k_3-k_2+e},\ldots,t^{k_s-k_2+e},0,\ldots,0)\\
=t^{k_2-e}h_{ij}(t^{e},\eta t^{e},\eta t^{k_3-k_2+e},\ldots,\eta t^{k_s-k_2+e},0,\ldots,0)
\end{multline*}
for $0<(-1)^it<\eta$ small enough. After substituting $\alpha$ by $\alpha^*$, we suppose 
$$
\alpha:=(\t^{e},\eta\t^e,\eta\t^{k_2'},\ldots,\eta\t^{k_{s-1}'},0,\ldots,0),
$$
where $k_\lambda':=k_{\lambda+1}-k_2+e$ for $\lambda=2,\ldots,s-1$ and $e<k_2'<\cdots<k_{s-1}'$. We denote $k_\lambda'$ with $k_\lambda$ to lighten notation. After a linear change of coordinates, we may assume $\alpha:=(\t^e,\t^{k_2},\ldots,\t^{k_{s-1}},0,\ldots,0)$. 

Now, if $k_2-e$ is again even, we repeat the procedure developed in this {\sc Case 1} and proceed recursively. After finitely many steps, either the corresponding $k_2-e$ is odd or $\alpha(\t)=(\t^e,0,\ldots,0)$ where $e=1,2$. If $e=2$, then $\alpha(t)=\alpha(-t)=(t^2,0,\ldots,0)\in\Int(\pol_1)\cap\Int(\pol_2)=\varnothing$ for $t>0$ small enough, which is a contradiction. Consequently, in this latter case $e=1$.

\noindent{\sc Case 2}. {\em $k_2-e$ is odd}. We prove first: {\em If $\alpha:\R\to\R^n,\ t\mapsto(t^{e},t^{k_2},\ldots,t^{k_s},0,\ldots,0)$ is a monomial map such that $\alpha(t)\in\Int(\pol_i)$ for $(-1)^it>0$ small enough, then $\alpha^*:\R\to\R^n,\ t\mapsto(t^{e},t^{e+1},t^{k_3-k_2+e+1},\ldots,t^{k_s-k_2+e+1},0,\ldots,0)$ is a monomial map such that $\alpha(t)\in\Int(\pol_i)$ for $(-1)^it>0$ small enough}.

We have $k_2-e-1$ is even and pick any pair $(i,j)$. If $h_{ij}(t^e,0,\ldots,0)>0$ for $(-1)^it>0$, then
$$
h_{ij}(t^{e},t^{e+1},t^{k_3-k_2+e+1},\ldots,t^{k_s-k_2+e+1},0,\ldots,0)>0
$$
for $0<(-1)^it<\eta$. If $h_{ij}$ is a linear form that does not depend on $\x_1$, then
\begin{equation*}
\begin{split}
0&<h_{ij}(t^e,t^{k_2},\ldots,t^{k_s},0,\ldots,0)=h_{ij}(t^{k_2},\ldots,t^{k_s},0,\ldots,0)\\
&=t^{k_2-e-1}h_{ij}(t^{e+1},t^{k_3-k_2+e+1},\ldots,t^{k_s-k_2+e+1},0,\ldots,0)\\
&=t^{k_2-e-1}h_{ij}(t^e,t^{e+1},t^{k_3-k_2+e+1},\ldots,t^{k_s-k_2+e+1},0,\ldots,0)
\end{split}
\end{equation*}
for $0<(-1)^it<\eta$ small enough. As $k_2-e-1$ is even, 
$$
h_{ij}(t^e,t^{e+1},t^{k_3-k_2+e+1},\ldots,t^{k_s-k_2+e+1},0,\ldots,0)>0
$$ 
for $0<(-1)^it<\eta$ small enough. Thus, we can suppose 
$$
\alpha:=(\t^{e},\t^{e+1},\t^{k_3'}\ldots,\t^{k_s'},0,\ldots,0),
$$ 
where $k_\lambda':=k_\lambda-k_2+e+1$ for $\lambda=3,\ldots,s$ and $e+1<k_3'<\cdots<k_s'$. Again, we denote $k_\lambda'$ with $k_\lambda$ to lighten notation.

\noindent{\sc Step $\ell+1$. Recursive modification of the Nash arc $\alpha$.} Suppose $\ell\geq2$ and
$$
\alpha:\R\to\R^n,\ t\mapsto(t^e,t^{e+1},\ldots,t^{e+\ell-1},t^{k_{\ell+1}},\ldots,t^{k_s},0,\ldots,0)
$$ 
is a monomial map such that $e+\ell-1<k_{\ell+1}<\ldots<k_s$ and $\alpha(t)\in\Int(\pol_i)$ for $(-1)^it>0$ small enough. Let us check: {\em After a linear change of coordinates, we may assume that either $$
\alpha:\R\to\R^n,\ (t^e,t^{e+1},\ldots,t^{e+\ell-1},0,\ldots,0)
$$ 
satisfies $\alpha(t)\in\Int(\pol_i)$ for $(-1)^it>0$ small enough or there exist $s'\leq s$ and positive integers $e+\ell<k_{\ell+2}'<\ldots<t^{k'_{s'}}$ such that
$$
\alpha:\R\to\R^n,\ (t^e,t^{e+1},\ldots,t^{e+\ell-1},t^{e+\ell},t^{k_{\ell+2}'},\ldots,t^{k'_{s'}},0,\ldots,0)
$$ 
satisfies $\alpha(t)\in\Int(\pol_i)$ for $(-1)^it>0$ small enough.} 

Fix a pair of indices $(i,j)$. We have 
$$
h_{ij}(t^e,t^{e+1},\ldots,t^{e+\ell-1},t^{k_{\ell+1}},\ldots,t^{k_s},0,\ldots,0)>0
$$
for $(-1)^it>0$ small enough. We deduce considering its Taylor expansion at $0$ that 
$$
h_{ij}(t^e,t^{e+1},\ldots,t^{e+\ell-1},0,\ldots,0)\geq0
$$ 
for $(-1)^it>0$ small enough, because $k_\lambda-(e+\ell-1)>0$ for $\lambda=\ell+1,\ldots,s$. If 
$$
h_{ij}(t^e,t^{e+1},\ldots,t^{e+\ell-1},0,\ldots,0)>0
$$ 
for $(-1)^it>0$ small enough, there exists an integer $1\leq m_{ij}\leq\ell$ such that $h_{ij}$ does not depend on $\x_1,\ldots,\x_{m_{ij}-1}$ and $h_{ij}(\t^{e+m_{ij}-1},0,\ldots,0)>0$ for $(-1)^it>0$ small enough. Thus, there exists $\eta\in(0,1)$ (valid for each pair $(i,j)$ in this situation) such that if $(c_{m_{ij}+1},\ldots,c_n)\in\R^{n-m_{ij}}$ and each $|c_k|\leq\eta$, then 
\begin{multline*}
h_{ij}(t^e,t^{e+1},\ldots,t^{e+m_{ij}-1},c_{m_{ij}+1}t^{e+m_{ij}-1},\ldots,c_nt^{e+m_{ij}-1},0,\ldots,0)\\
=h_{ij}(t^{e+m_{ij}-1},c_{m_{ij}+1}t^{e+m_{ij}-1},\ldots,c_nt^{e+m_{ij}-1},0,\ldots,0)>0
\end{multline*}
for $(-1)^it>0$ small enough. 

Otherwise, $h_{ij}(t^e,t^{e+1},\ldots,t^{e+\ell-1},0,\ldots,0)=0$ for $(-1)^it>0$ small enough, so $h_{ij}$ is a linear form that does not depend on $\x_1,\ldots,\x_\ell$. 

Next, we distinguish two cases:

\noindent{\sc Case 1}. {\em $k_{\ell+1}-(e+\ell-1)$ is even}. We check first: {\em 
$$
\alpha^*:\R\to\R^n,\ t\mapsto(t^e,t^{e+1},\ldots,t^{e+\ell-1},\eta t^{e+\ell-1},\eta t^{k_{\ell+2}-k_{\ell+1}+e+\ell-1},\ldots,\eta t^{k_s-k_{\ell+1}+e+\ell-1},0,\ldots,0)
$$ 
is a monomial map such that $\alpha(t)\in\Int(\pol_i)$ for $(-1)^it>0$ small enough}.

Pick any pair $(i,j)$. If $h_{ij}(t^e,t^{e+1},\ldots,t^{e+\ell-1},0,\ldots,0)>0$ for $(-1)^it>0$ small enough, then
$$
h_{ij}(t^e,t^{e+1},\ldots,t^{e+\ell-1},\eta t^{e+\ell-1},\eta t^{k_{\ell+2}-k_{\ell+1}+e+\ell-1},\ldots,\eta t^{k_s-k_{\ell+1}+e+\ell-1},0,\ldots,0)>0
$$
for $0<(-1)^it<\eta$. If $h_{ij}$ is a linear form that does not depend on $\x_1,\ldots,\x_\ell$, then
{\begin{equation*}
\begin{split}
0&<\eta h_{ij}(t^{k_{\ell+1}},\ldots,t^{k_s},0,\ldots,0)\\
&=\eta t^{k_{\ell+1}-(e+\ell-1)}h_{ij}(t^{e+\ell-1},t^{k_{\ell+2}-k_{\ell+1}+e+\ell-1},\ldots,t^{k_s-k_{\ell+1}+e+\ell-1},0,\ldots,0)\\
&=t^{k_{\ell+1}-(e+\ell-1)}h_{ij}(t^e,t^{e+1},\ldots,t^{e+\ell-1},\eta t^{e+\ell-1},\eta t^{k_{\ell+2}-k_{\ell+1}+e+\ell-1},\ldots,\eta t^{k_s-k_{\ell+1}+e+\ell-1},0,\ldots,0)
\end{split}
\end{equation*}}
for $0<(-1)^it<\eta$ small enough. As $k_{\ell+1}-(e+\ell-1)$ is even,
$$
h_{ij}(t^e,t^{e+1},\ldots,t^{e+\ell-1},\eta t^{e+\ell-1},\eta t^{k_{\ell+2}-k_{\ell+1}+e+\ell-1},\ldots,\eta t^{k_s-k_{\ell+1}+e+\ell-1},0,\ldots,0)>0.
$$
Thus, after substituting $\alpha$ by $\alpha^*$, we can suppose 
$$
\alpha:=(\t^e,\t^{e+1},\ldots,\t^{e+\ell-1},\eta\t^{e+\ell-1},\eta\t^{k_{\ell+1}'},\ldots,\eta\t^{k_{s-1}'},0,\ldots,0),
$$
where $k_\lambda':=k_{\lambda+1}-k_{\ell+1}+e+\ell-1$ for $\lambda=\ell+1,\ldots,s-1$ and $e+\ell-1<k_{\ell+1}'<\cdots<k_{s-1}'$. We denote $k_\lambda'$ with $k_\lambda$ to lighten notation. After a linear change of coordinates, we may assume $\alpha:=(\t^e,\t^{e+1},\ldots,\t^{e+\ell-1},\t^{k_{\ell+1}},\ldots,\t^{k_{s-1}},0,\ldots,0)$. 

Now, if $k_{\ell+1}-(e+\ell-1)$ is again even, we repeat the procedure developed in this {\sc Case 1} and proceed recursively. After finitely many steps, either the corresponding $k_{\ell+1}-(e+\ell-1)$ is odd or $\alpha(\t)=(\t^e,\t^{e+1},\ldots,\t^{e+\ell-1},0,\ldots,0)$ where $e=1,2$ and $\ell\geq2$.

\noindent{\sc Case 2}. {\em $k_{\ell+1}-(e+\ell-1)$ is odd}. We prove first: {\em 
$$
\alpha^*:\R\to\R^n,\ t\mapsto(t^{e},t^{e+1},\ldots,t^{e+\ell-1},t^{e+\ell},t^{k_{\ell+2}-k_{\ell+1}+e+\ell},\ldots,t^{k_s-k_{\ell+1}+e+\ell},0,\ldots,0)
$$ 
is a monomial map such that $\alpha(t)\in\Int(\pol_i)$ for $(-1)^it>0$ small enough}.

Observe that $k_{\ell+1}-(e+\ell)$ is even and pick any pair $(i,j)$. If $h_{ij}(t^e,t^{e+1},\ldots,t^{e+\ell-1},0,\ldots,0)>0$ for $(-1)^it>0$ small enough, then
$$
h_{ij}(t^{e},t^{e+1},\ldots,t^{e+\ell-1},t^{e+\ell},t^{k_{\ell+2}-k_{\ell+1}+e+\ell},\ldots,t^{k_s-k_{\ell+1}+e+\ell},0,\ldots,0)>0
$$
for $0<(-1)^it<\eta$. If $h_{ij}$ is a linear form that does not depend on $\x_1,\ldots,\x_\ell$, then
\begin{equation*}
\begin{split}
0&<h_{ij}(t^e,t^{e+1},\ldots,t^{e+\ell-1},t^{k_{\ell+1}},\ldots,t^{k_s},0,\ldots,0)=h_{ij}(t^{k_{\ell+1}},\ldots,t^{k_s},0,\ldots,0)\\
&=t^{k_{\ell+1}-(e+\ell)}h_{ij}(t^{e+\ell},t^{k_{\ell+2}-k_{\ell+1}+e+\ell},\ldots,t^{k_s-k_{\ell+1}+e+\ell},0,\ldots,0)\\
&=t^{k_{\ell+1}-(e+\ell)}h_{ij}(t^{e},t^{e+1},\ldots,t^{e+\ell-1},t^{e+\ell},t^{k_{\ell+2}-k_{\ell+1}+e+\ell},\ldots,t^{k_s-k_{\ell+1}+e+\ell},0,\ldots,0)
\end{split}
\end{equation*}
for $0<(-1)^it<\eta$ small enough. As $k_{\ell+1}-(e+\ell)$ is even, 
$$
h_{ij}(t^{e},t^{e+1},\ldots,t^{e+\ell-1},t^{e+\ell},t^{k_{\ell+2}-k_{\ell+1}+e+\ell},\ldots,t^{k_s-k_{\ell+1}+e+\ell},0,\ldots,0)>0
$$ 
for $0<(-1)^it<\eta$ small enough. Thus, we can suppose 
$$
\alpha:=(\t^{e},\t^{e+1},\ldots,\t^{e+\ell-1},\t^{e+\ell},\t^{k_{\ell+2}'},\ldots,\t^{k_s'},0,\ldots,0),
$$ 
where $k_\lambda':=k_\lambda-k_{\ell+1}+(e+\ell)$ and $e+\ell<k_{\ell+2}'<\cdots<k_s'$. Again, we denote $k_\lambda'$ with $k_\lambda$ to lighten notation.

\noindent{\sc Conclusion.} The process ends after finitely many steps providing the statement, as required.
\end{proof}

The following example supplies a pair of $n$-dimensional convex polyhedra in $\R^n$ with disjoint interiors and adherent to the origin for which the simplest monomial paths connecting their interiors analytically through the origin are moment paths.

\begin{examples}\label{sharp}
Denote $\x_{n+1}:=0$ and let $\x_1,\ldots,\x_n$ be variables. Consider for $\epsilon=0,1$ the convex polyhedra (Figure \ref{fig5})
\begin{align*}
\pol_1&:=\{\x_k\leq\x_{k-1},\ k=2,\ldots,n+1\}\cap\{\x_1\leq1\},\\
\pol_{2\epsilon}&:=\{(-1)^{k+\epsilon}\x_k\leq(-1)^{k-1+\epsilon}\x_{k-1},\ k=2,\ldots,n+1\}\cap\{(-1)^{1+\veps}\x_1\leq1\}.
\end{align*}

\begin{center}
\begin{figure}[ht]
\begin{tikzpicture}[scale=1]
\draw[color=black,line width=1.5pt] (0,-1) -- (2,-1) -- (0,1) -- (0,-1);
\draw[color=black,line width=1.5pt] (4,-1) -- (2,-1) -- (4,1) -- (4,-1);

\draw[fill=gray!100,opacity=0.5,draw=none] (0,-1) -- (2,-1) -- (0,1) -- (0,-1);
\draw[fill=gray!100,opacity=0.5,draw=none] (4,-1) -- (2,-1) -- (4,1) -- (4,-1);

\draw[color=red,line width=1.5pt] (1,-0.5) parabola bend (2,-1) (3,-0.5);
\draw (2,-1) node{$\bullet$};
\draw (0.5,-0.25) node{$\pol_{20}$};
\draw (3.5,-0.25) node{$\pol_1$};
\draw (2,-0.5) node{{\color{red}$\alpha$}};

\draw[dashed,color=black,line width=1pt] (5,-2) -- (5,2);

\draw[color=black,line width=1.5pt] (8,0) -- (6,0) -- (8,-2) -- (8,0);
\draw[color=black,line width=1.5pt] (8,0) -- (6,0) -- (8,2) -- (8,0);

\draw[fill=gray!100,opacity=0.5,draw=none] (8,0) -- (6,0) -- (8,-2) -- (8,0);
\draw[fill=gray!100,opacity=0.5,draw=none] (8,0) -- (6,0) -- (8,2) -- (8,0);

\draw[color=red,line width=1.5pt] (6,0) parabola bend (6,0) (7,0.5);
\draw[color=red,line width=1.5pt] (6,0) parabola bend (6,0) (7,-0.5);

\draw (6,0) node{$\bullet$};
\draw (7.5,-0.75) node{$\pol_{21}$};
\draw (7.5,0.75) node{$\pol_1$};
\draw (6,0.5) node{{\color{red}$\alpha$}};

\end{tikzpicture}
\caption{Polyhedra $\pol_1$ and $\pol_{2\epsilon}$ of Example \ref{sharp}\label{fig5}}
\vspace{-1.75em}
\end{figure}
\end{center}

We have
\begin{align*}
\Int(\pol_1)&:=\{\x_k<\x_{k-1},\ k=2,\ldots,n+1\}\cap\{\x_1<1\},\\
\Int(\pol_{2\epsilon})&:=\{(-1)^{k+\epsilon}\x_k<(-1)^{k-1+\epsilon}\x_{k-1},\ k=2,\ldots,n+1\}\cap\{(-1)^{1+\veps}\x_1<1\}.
\end{align*}
One can check that 
$$
\pol_1\cap\pol_{2\epsilon}=
\begin{cases}
\{0\}&\text{if $\epsilon=0$,}\\
\{0\leq\x_1\leq1,\x_k=0:\ k=2,\ldots,n\}&\text{if $\epsilon=1$}
\end{cases}
$$ 
and $\Int(\pol_1)\cap\Int(\pol_{2\epsilon})=\varnothing$. Consider a monomial map $\alpha_\epsilon:\R\to\R^n,\ t\mapsto(a_1t^{k_1},\ldots,a_nt^{k_n})$ for some integers $k_i\geq1$ (so $\alpha_\epsilon(0)=0$) and some $a_1,\ldots,a_n\in\R$ (see Lemma \ref{nashpol}). Assume there exists $\delta>0$ such that $\alpha_\epsilon((0,\delta])\subset\Int(\pol_1)$ and $\alpha_\epsilon([-\delta,0))\subset\Int(\pol_{2\epsilon})$. Consequently,
\begin{align}
&1>a_1t^{k_1}>\cdots>a_\ell t^{k_\ell}>\cdots>a_nt^{k_n}>0\label{des1}\\
&1>(-1)^{1+\epsilon+k_1}a_1(-t)^{k_1}>\cdots>(-1)^{\ell+\epsilon+k_\ell}a_\ell(-t)^{k_\ell}>\cdots>(-1)^{n+\epsilon+k_n}a_n(-t)^{k_n}>0\label{des2}
\end{align}
where $0<t\leq\delta$ in \eqref{des1} and $0<-t\leq\delta$ in \eqref{des2}. Thus, each $a_\ell>0$, $k_\ell\leq k_{\ell+1}$ for $\ell=1,\ldots,n-1$ and $\ell+\epsilon+k_\ell$ is even for each $\ell=1,\ldots,n$, so the parity of $k_\ell$ coincides with the one of $\ell+\epsilon$ (so $k_\ell k_{\ell+1}$ is odd for $\ell=1,\ldots,n-1$). The minimal possible choice for the exponents is $k_\ell=\ell+\epsilon$ for $\ell=1,\ldots,n$ and $\epsilon=0,1$, so we obtain the moment curve $\alpha_\epsilon:\R\to\R^n,\ t\mapsto(a_1t^{1+\epsilon},a_2t^{2+\epsilon},\ldots,a_nt^{n+\epsilon})$ for some $a_1,\ldots,a_n>0$ and $\epsilon=0,1$.\hfill$\sqbullet$
\end{examples}

\subsection{Proof of Main Theorem \ref{plcase}.}
As we are working with convex polyhedra, the polynomial paths joining polynomial arcs and polynomial bridges can be chosen to be segments. For each $a\in\R^n$ and $\veps>0$ denote the open ball of center $a$ and radius $\veps>0$ with $\Bb_n(a,\veps)$. In order to compute the distance of a segment inside an $n$-dimensional convex polyhedron $\pol\subset\R^n$ to its exterior $\R^n\setminus\pol$ (or equivalently to its boundary $\partial\pol$) we present the following result.

\begin{lem}\label{segment}
Let $\Cc\subset\R^n$ be a convex set (that spans $\R^n$) and $x,y\in\Cc$. Let $\Ss$ be the segment that connects $x$ and $y$. Then
$$
\dist(\Ss,\R^n\setminus\Int(\Cc))=\min\{\dist(x,\R^n\setminus\Int(\Cc)),\dist(y,\R^n\setminus\Int(\Cc))\}.
$$
\end{lem}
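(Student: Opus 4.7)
The plan is to establish the two inequalities separately. The inequality
\[
\dist(\Ss,\R^n\setminus\Int(\Cc))\le\min\{\dist(x,\R^n\setminus\Int(\Cc)),\dist(y,\R^n\setminus\Int(\Cc))\}
\]
is immediate because $x,y\in\Ss$, so the distance from $\Ss$ to $\R^n\setminus\Int(\Cc)$ is bounded above by the distance from either endpoint.

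For the reverse inequality, set $r_x:=\dist(x,\R^n\setminus\Int(\Cc))$, $r_y:=\dist(y,\R^n\setminus\Int(\Cc))$ and $r:=\min\{r_x,r_y\}$. Since $\Cc$ spans $\R^n$, its interior is a non-empty open convex subset of $\R^n$, and the standard fact $\dist(p,\R^n\setminus\Int(\Cc))=\sup\{\rho>0:\Bb_n(p,\rho)\subset\Int(\Cc)\}$ gives $\Bb_n(x,r)\subset\Int(\Cc)$ and $\Bb_n(y,r)\subset\Int(\Cc)$. I would then fix an arbitrary point $z:=(1-t)x+ty\in\Ss$ with $t\in[0,1]$ and show $\Bb_n(z,r)\subset\Int(\Cc)$. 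Writing a generic element of that ball as $z+v$ with $\|v\|<r$, I would use the algebraic identity
\[
z+v=(1-t)(x+v)+t(y+v),
\]
observe that $x+v\in\Bb_n(x,r)\subset\Int(\Cc)$ and $y+v\in\Bb_n(y,r)\subset\Int(\Cc)$, and invoke convexity of $\Int(\Cc)$ to conclude $z+v\in\Int(\Cc)$. This shows $\dist(z,\R^n\setminus\Int(\Cc))\ge r$ for every $z\in\Ss$, giving the desired inequality.

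There is essentially no obstacle: the proof reduces to the elementary observation that a translate of a ball contained in $\Int(\Cc)$ can be obtained by translating both endpoints simultaneously, and then convexity of $\Int(\Cc)$ does the rest. The only point worth stating carefully is that $\Int(\Cc)$ is convex (because $\Cc$ spans $\R^n$ and so $\Int(\Cc)\ne\varnothing$; the interior of a convex set with non-empty interior is convex), which is used in the final step.
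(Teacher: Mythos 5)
Your proof is correct and follows essentially the same route as the paper's: both reduce to showing that every open ball of radius $r:=\min\{r_x,r_y\}$ centered on the segment lies in $\Int(\Cc)$, by translating the endpoints by the same vector $v=p-z$ and invoking convexity of $\Int(\Cc)$. The only cosmetic difference is that the paper dispatches the degenerate case $r=0$ separately, which your argument also handles trivially.
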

\begin{proof}
If either $x$ or $y$ belong to $\partial\Cc$, then $\dist(\Ss,\R^n\setminus\Int(\Cc))=0$ and the equality in the statement holds. Assume $0<\veps:=\dist(x,\R^n\setminus\Int(\Cc))\leq\dist(y,\R^n\setminus\Int(\Cc))$ and observe that $\Bb_n(x,\veps),\Bb_n(y,\veps)\subset\Int(\Cc)$. We claim: $\bigcup_{z\in\Ss}\Bb_n(z,\veps)\subset\Int(\Cc)$. Once this is proved, the equality in the statement follows straightforwardly.

Let $z\in\Ss$ and $p\in\Bb_n(z,\veps)$. Let $t\in[0,1]$ be such that $z=tx+(1-t)y$. Consider the points $p_1:=x+(p-z)$ and $p_2:=y+(p-z)$. As $p\in\Bb_n(z,\veps)$, we have $\|p-z\|<\veps$, so $p_1\in\Bb_n(x,\veps)\subset\Int(\Cc)$ and $p_2\in\Bb_n(y,\veps)\subset\Int(\Cc)$. Thus, 
$$
p=t(x+(p-z))+(1-t)(y+(p-z))=tp_1+(1-t)p_2\in\Int(\Cc),
$$
as required.
\end{proof}

We are ready to prove Main Theorem \ref{plcase} by simplifying the proof of Main Theorem \ref{nashsmart}. The \em degree of a polynomial map \em $\alpha:\R\to\R^n$ is the maximum of the degrees of its components.

\begin{proof}[Proof of Main Theorem \em \ref{plcase}]
By Lemma \ref{cuspidal} for each $t_i$ there exist a polynomial path $\beta_i:\R\to\R^n$ of degree $e_i\leq3$ and $\delta_i>0$ such that: $\beta_i(t_i)=p_i$ and $\Lambda_i:=\beta_i([t_i-\delta_i,t_i+\delta_i])\subset\Ss_i\cup\{p_i\}$ is contained in a small enough ball centered at $p_i$. 

\begin{center}
\begin{figure}[ht]
\begin{tikzpicture}[scale=1.5]
\draw[color=black,line width=2pt] (0,0) -- (2,0) -- (0,2) -- (1,2) -- (2,3) -- (1,4) -- (0,4) -- (0,0);
\draw[color=black,line width=2pt] (4,0) -- (2,0) -- (4,2) -- (3,2) -- (2,3) -- (3,4) -- (4,4) -- (4,0);

\draw[fill=gray!100,opacity=0.5,draw=none] (0,0) -- (2,0) -- (0,2) -- (1,2) -- (2,3) -- (1,4) -- (0,4) -- (0,0);
\draw[fill=gray!100,opacity=0.5,draw=none](4,0) -- (2,0) -- (4,2) -- (3,2) -- (2,3) -- (3,4) -- (4,4) -- (4,0);

\draw[color=blue,line width=2pt] (0,0) -- (4,0) -- (4,4) -- (2,3) -- (0,4) -- (0,0);

\draw[color=red,line width=2pt] (1,0.5) parabola bend (2,0) (3,0.5);
\draw[color=red,line width=2pt] (0.5,1) .. controls (-0.15,2) and (-0.15,2) .. (0.5,3);
\draw[color=red,line width=2pt] (3.5,1) .. controls (4.15,2) and (4.15,2) .. (3.5,3);
\draw[color=red,line width=2pt] (1.5,3) -- (2.5,3);

\draw[color=red,line width=2pt] (0.25,1) .. controls (0.25,1) and (0.5,0.5) .. (0,0);
\draw[color=red,line width=2pt] (1,0.25) .. controls (1,0.25) and (0.5,0.5) .. (0,0);

\draw[color=red,line width=2pt] (3.75,1) .. controls (3.75,1) and (3.5,0.5) .. (4,0);
\draw[color=red,line width=2pt] (3,0.25) .. controls (3,0.25) and (3.5,0.5) .. (4,0);

\draw[color=red,line width=2pt] (0.25,3) .. controls (0.25,3) and (0.5,3.5) .. (0,4);
\draw[color=red,line width=2pt] (1,3.75) .. controls (1,3.75) and (0.5,3.5) .. (0,4);

\draw[color=red,line width=2pt] (3.75,3) .. controls (3.75,3) and (3.5,3.5) .. (4,4);
\draw[color=red,line width=2pt] (3,3.75) .. controls (3,3.75) and (3.5,3.5) .. (4,4);

\draw[color=red,line width=2pt] (0.2,0.15) -- (1.45,0.15);
\draw[color=red,line width=2pt] (2.55,0.15) -- (3.8,0.15);

\draw[color=red,line width=2pt] (0.15,0.2) -- (0.15,1.55);
\draw[color=red,line width=2pt] (0.15,2.45) -- (0.15,3.8);

\draw[color=red,line width=2pt] (3.85,0.2) -- (3.85,1.55);
\draw[color=red,line width=2pt] (3.85,2.45) -- (3.85,3.8);

\draw[color=red,line width=2pt] (0.2,3.85) -- (1.75,3);
\draw[color=red,line width=2pt] (2.25,3) -- (3.8,3.85);

\draw[densely dashed,color=black,line width=1pt] (0,0) -- (0.2,0.15) -- (1.45,0.15) parabola bend (2,0) (2.55,0.15) -- (3.8,0.15) -- (4,0) -- (3.85,0.2) -- (3.85,1.55) .. controls (4.035,2) and (4.035,2) .. (3.85,2.45) -- (3.85,3.8) -- (4,4) -- (3.8,3.85) -- (2.25,3) -- (1.75,3) -- (0.2,3.85) -- (0,4) --(0.15,3.8) -- (0.15,2.45) .. controls (-0.035,2) and (-0.035,2) .. (0.15,1.55) -- (0.15,0.2) -- (0,0);

\draw[color=green,line width=2pt] (0,0) .. controls (0.4,0.5) and (0.4,0.5) .. (0.25,1.6) .. controls (0.15,1.75) and (0,1.5) .. (0,2) .. controls (0,2.5) and (0.15,2.25) .. (0.25,2.4) .. controls (0.4,3.5) and (0.4,3.5) .. (0,4) parabola bend (2,3) (4,4) .. controls (3.6,3.5) and (3.6,3.5) .. (3.75,2.4) .. controls (3.85,2.25) and (4,2.5) .. (4,2) .. controls (4,1.5) and (3.85,1.75) .. (3.75,1.6) .. controls (3.6,0.5) and (3.6,0.5) .. (4,0) .. controls (3.5,0.4) and (3.5,0.4) .. (2.4,0.25) .. controls (2.25,0.15) and (2.5,0) .. (2,0) .. controls (1.5,0) and (1.75,0.15) .. (1.6,0.25) .. controls (0.5,0.4) and (0.5,0.4) .. (0,0);

\draw (2,0) node{$\bullet$};
\draw (0,2) node{$\bullet$};
\draw (4,2) node{$\bullet$};
\draw (2,3) node{$\bullet$};
\draw (0,0) node{$\bullet$};
\draw (4,0) node{$\bullet$};
\draw (4,4) node{$\bullet$};
\draw (0,4) node{$\bullet$};

\draw (0.2,0.15) node{\scriptsize$\bullet$};
\draw (1.45,0.15) node{\scriptsize$\bullet$};
\draw (2.55,0.15) node{\scriptsize$\bullet$};
\draw (3.8,0.15) node{\scriptsize$\bullet$};
\draw (0.15,0.2) node{\scriptsize$\bullet$};
\draw (0.15,1.55) node{\scriptsize$\bullet$};
\draw (0.15,2.45) node{\scriptsize$\bullet$};
\draw (0.15,3.8) node{\scriptsize$\bullet$};
\draw (0.2,3.85) node{\scriptsize$\bullet$};

\draw (3.85,0.2) node{\scriptsize$\bullet$};
\draw (3.85,1.55) node{\scriptsize$\bullet$};
\draw (3.85,2.45) node{\scriptsize$\bullet$};
\draw (3.85,3.8) node{\scriptsize$\bullet$};
\draw (3.8,3.85) node{\scriptsize$\bullet$};

\draw (1.75,3) node{\scriptsize$\bullet$};
\draw (2.25,3) node{\scriptsize$\bullet$};

\draw (0.75,0.75) node{$\Ss_1$};
\draw (3.25,0.75) node{$\Ss_2$};
\draw (0.75,3) node{$\Ss_4$};
\draw (3.25,3) node{$\Ss_3$};
\draw (2,3.25) node{{\color{blue}$\beta$}};
\draw (2,0.25) node{{\color{red}$\gamma$}};
\draw (2,-0.25) node{{\color{green}$\alpha$}};

\end{tikzpicture}
\caption{Construction of the polygonal path $\beta$ (blue), the continuous piecewise polynomial path $\gamma$ (red and dashed black) and the polynomial path $\alpha$ (green).\label{fig6}}
\vspace{-1.75em}
\end{figure}
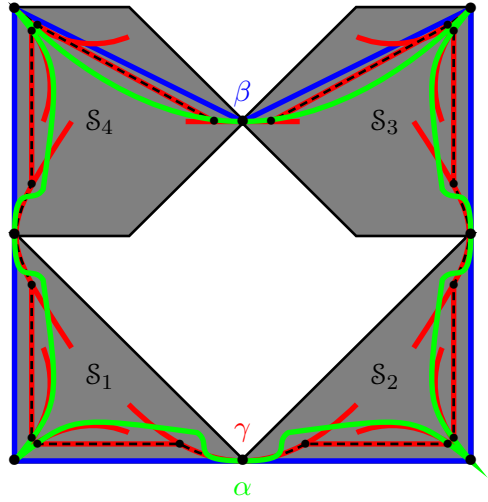
\end{center}

Fix $i=1,\ldots,r-1$ and recall that both $\Ss_i$ and $\Ss_{i+1}$ are the interiors of convex polyhedra of dimension $n$. Suppose first $\Ss_i\cap\Ss_{i+1}\neq\varnothing$. The intersection $\Ss_i\cap\Ss_{i+1}$ is the interior of a convex polyhedron of dimension $n$. By Lemma \ref{cuspidal} there exists a polynomial arc $\lambda_i:[s_i-\rho_i,s_i+\rho_i]\to\Ss_i\cap\Ss_{i+1}$ of degree $3\leq n+1$ such that $\lambda_i([s_i-\rho_i,s_i+\rho_i]\setminus\{\rho_i\})\subset(\Ss_i\cap\Ss_{i+1})\cup\{q_i\}$ and we substitute $\Gamma_i$ by the image of $\lambda_i$. Suppose next $\Ss_i\cap\Ss_{i+1}=\varnothing$. Let $\eta_i:[-1,1]\to\Ss_i\cup\Ss_{i+1}\cup\{q_i\}$ be a Nash parameterization of $\Gamma_i$ such that $\eta_i([-1,0))\subset\Ss_i$ and $\eta_i((0,1])\subset\Ss_{i+1}$. By Theorem \ref{mc} we can modify $\Gamma_i$ and after that it admits a polynomial parameterization $\lambda_i:[s_i-\rho_i,s_i+\rho_i]\to\Ss_i\cup\Ss_{i+1}\cup\{q_i\}$ of degree $d_i\leq n+1$, where $\rho_i>0$, $\lambda_i(s_i)=q_i$, $\lambda_i([s_i-\rho_i,s_i))\subset\Ss_i$ and $\lambda_i((s_i,s_i+\rho_i])\subset\Ss_{i+1}$. We choose each $\rho_i>0$ small enough to guarantee that $\Gamma_i$ is contained in a small enough ball centered at $q_i$. Denote $\tau_i:=t_i-\delta_i$, $\theta_i:=t_i+\delta_i$, $\xi_i:=s_i-\rho_i$ and $\zeta_i:=s_i+\rho_i$. We may assume
$$
0<\tau_1<t_1<\theta_1<\xi_1<s_1<\zeta_1<\tau_2<t_2<\theta_2<\cdots<\xi_{r-1}<s_{r-1}<\zeta_{r-1}<\tau_r<t_r<\theta_r<1.
$$

Let $\gamma:[0,1]\to\Ss\cup\{p_1,\ldots,p_r,q_1,\ldots,q_{r-1}\}\subset\R^n$ be a continuous piecewise polynomial path (Figure \ref{fig6}) such that: 
\begin{itemize}
\item[$\bullet$] $\gamma|_{(\tau_i,\theta_i)}=\beta_i|_{(\tau_i,\theta_i)}$ for $i=1,\ldots,r$ and $\gamma|_{(\xi_i,\zeta_i)}=\lambda_i|_{(\xi_i,\zeta_i)}$ for $i=1,\ldots,r-1$.
\item[$\bullet$] $\gamma|_{[\theta_i,\xi_i]}$ is an affine parameterization of the segment inside $\Ss_i$ that connects $\beta_i(\theta_i)$ with $\lambda_i(\xi_i)$ for $i=1,\ldots,r$.
\item[$\bullet$] $\gamma|_{[\zeta_i,\tau_{i+1}]}$ is an affine parameterization of the segment inside $\Ss_{i+1}$ that connects $\lambda_i(\zeta_i)$ with $\beta_{i+1}(\tau_{i+1})$ for $i=1,\ldots,r-1$.
\item[$\bullet$] $\gamma|_{[0,\tau_1]}$ and $\gamma|_{[\theta_r,1]}$ are an affine parameterization of segments inside $\Ss_1$ and $\Ss_r$.
\end{itemize}
Using that $\delta_i,\rho_i>0$ has been chosen small enough to guarantee that $\Lambda_i,\Gamma_i$ are contained in small balls centered in $p_i$ and $q_i$, one can check that $\gamma|_{[t_1,t_r]}$ is close to the polygonal path $\beta$ (see (iv) in the statement). In addition, each polynomial piece of $\gamma$ has degree $\leq n+1$. Define
{\small$$
\veps:=\min_i\{\dist(\beta_i(\tau_i),\R^n\setminus\Ss_i),\dist(\beta_i(\theta_i),\R^n\setminus\Ss_i),\dist(\lambda_i(\xi_i),\R^n\setminus\Ss_i),\dist(\lambda_i(\zeta_i),\R^n\setminus\Ss_{i+1})\}>0.
$$}
Denote $K:=[0,1]\setminus(\bigcup_{i=1}^r(\tau_i,\theta_i)\cup\bigcup_{i=1}^{r-1}(\xi_i,\zeta_i))$ and recall that if $I$ is a connected component of $K$, the restriction $\gamma|_I$ is an affine parameterization of a segment inside some $\Ss_i$. By Lemma \ref{segment} 
\begin{itemize}
\item[(0)] if $\gamma^*:[0,1]\to\R^n$ is a continuous semialgebraic map such that $\|\gamma-\gamma^*\|_K<\veps$, then $\gamma^*(K)\subset\Ss$ and each connect component of $\gamma^*(K)$ is contained in the required $\Ss_i$. In addition, $\gamma^*|_{K\cap[t_1,t_r]}$ is close to $\beta|_{K\cap[t_1,t_r]}$.
\end{itemize}

Write $\Ss_i:=\{h_{i1}>0,\ldots,h_{is}>0\}$ where $h_{ij}\in\R[\x]$ is a polynomial of degree $1$. As $\beta_i([t_i-\delta_i,t_i+\delta_i]\setminus\{t_i\})\subset\Ss_i=\{h_{i1}>0,\ldots,h_{is}>0\}$, the polynomial $h_{ij}\circ\beta_i$ is strictly positive on the interval $(t_i,t_i+\delta_i]$. As each $h_{ij}$ has degree $1$ and $\beta_i$ has degree $e_i\leq 3$, then $h_{ij}\circ\beta_i$ is a non-zero polynomial of degree $m_{ij}\leq e_i\leq3$. Analogously, as $\lambda_i([s_i-\rho_i,s_i))\subset\Ss_i=\{h_{i1}>0,\ldots,h_{is}>0\}$ and $\lambda_i((s_i,s_i+\rho_i])\subset\Ss_{i+1}=\{h_{i+1,1}>0,\ldots,h_{i+1,s}>0\}$, the polynomial $h_{ij}\circ\lambda_i$ is strictly positive on $[s_i-\rho_i,s_i)$ and the polynomial $h_{i+1,j}\circ\lambda_i$ is strictly positive on $(s_i,s_i+\rho_i]$. Thus, $h_{ij}\circ\lambda_i$ and $h_{i+1,j}\circ\lambda_i$ are non-zero polynomials of degrees $m_{ij}',m_{ij}''\leq d_i\leq n+1$. Consider the constants
\begin{align*}
&\mu_{ij}:=\Big|\frac{d^{m_{ij}}}{d\t^{m_{ij}}}(h_{ij}\circ\gamma|_{[\tau_i,\theta_i]})\Big|=\Big|\frac{d^{m_{ij}}}{d\t^{m_{ij}}}(h_{ij}\circ\beta_i)\Big|>0,\\
&\mu_{ij}':=\Big|\frac{d^{m_{ij}'}}{d\t^{m_{ij}'}}(h_{ij}\circ\gamma|_{[\xi_i,s_i]})\Big|=\Big|\frac{d^{m_{ij}'}}{d\t^{m_{ij}'}}(h_{ij}\circ\lambda_i)\Big|>0,\\
&\mu_{ij}'':=\Big|\frac{d^{m_{ij}''}}{d\t^{m_{ij}''}}(h_{i+1,j}\circ\gamma|_{[s_i,\zeta_i]})\Big|=\Big|\frac{d^{m_{ij}''}}{d\t^{m_{ij}''}}(h_{i+1,j}\circ\lambda_i)\Big|>0.
\end{align*}
Define 
\begin{equation}\label{ell}
\ell:=\max\{m_{ij},m_{ij}',m_{ij}'':\ 1\leq i\leq r,\ 1\leq j\leq s\}\leq n+1. 
\end{equation}
By the Remark \ref{sharpr}(i) to the proof of Lemma \ref{clue}(1) we deduce that if $\gamma^*:[0,1]\to\R^n$ is a $\Cont^{\ell+4}$ semialgebraic map such that 
\begin{itemize}
\item[(1)] $|\frac{d^{m_{ij}}}{d\t^{m_{ij}}}(h_{ij}\circ\gamma|_{[\tau_i,\theta_i]})-\frac{d^{m_{ij}}}{d\t^{m_{ij}}}(h_{ij}\circ\gamma^*|_{[\tau_i,\theta_i]})|_<\mu_{ij}$,
\item[(2)] $|\frac{d^{m_{ij}'}}{d\t^{m_{ij}'}}(h_{ij}\circ\gamma|_{[\xi_i,s_i]})-\frac{d^{m_{ij}'}}{d\t^{m_{ij}'}}(h_{ij}\circ\gamma^*|_{[\xi_i,s_i]})|<\mu_{ij}'$,
\item[(3)] $|\frac{d^{m_{ij}''}}{d\t^{m_{ij}''}}(h_{i+1,j}\circ\gamma|_{[s_i,\zeta_i]})-\frac{d^{m_{ij}''}}{d\t^{m_{ij}''}}(h_{i+1,j}\circ\gamma^*|_{[s_i,\zeta_i]})|<\mu_{ij}''$,
\item[(4)] $T_{t_i}^{e_i}\gamma=T_{t_i}^{e_i}\gamma^*$ for $i=1,\ldots,r$ and $T_{s_i}^{d_i}\gamma=T_{s_i}^{d_i}\gamma^*$ for $i=1,\ldots,r-1$,
\end{itemize}
then $\gamma^*([0,1]\setminus K)\subset\Ss\cup\{p_1,\ldots,p_r,q_1,\ldots,q_{r-1}\}$. In fact, $\gamma^*([\tau_i,\theta_i])\subset\Ss_i$, $\gamma^*([\xi_i,s_i])\subset\Ss_i$ and $\gamma^*([s_i,\zeta_i])\subset\Ss_{i+1}$. Conditions (0) to (4) concerning $\veps,\mu_{ij},\mu_{ij}',\mu_{ij}''$ and the Taylor expansions at the values $t_i$ and $s_i$ determine when a polynomial path $\alpha:\R\to\R^n$, whose restriction to $[0,1]$ is close to $\gamma$, satisfies the conditions (i) to (iv) in the statement (Figure \ref{fig6}). Finally, such a polynomial path $\alpha$ exists by Lemma \ref{swdp}, as required.
\end{proof}

\subsection{Degree of the polynomial approximation in the PL case.}\label{bound}
We maintain all the notations introduced in the proof of Main Theorem \ref{plcase}. Recall that the polynomials $h_{ij}$ have degree $1$. To simplify the presentation we assume $m_{ij}=e_i$, $m_{ij}'=m_{ij}''=d_i$ for each couple $(i,j)$ and we take a smaller $0<\veps'<\veps$ such that if $\|\alpha-\gamma\|_{K}<\veps'$, $\|\alpha^{(e_i)}-\gamma^{(e_i)}\|_{[\tau_i,\theta_i]}<\veps'$ and $\|\alpha^{(d_i)}-\gamma^{(d_i)}\|_{[\xi_i,\zeta_i]}<\veps'$, then conditions $(0)$ to $(3)$ are satisfied. As the polynomials $h_{ij}\in\R[\x]$ have degree $1$, the computation of $\veps'$ from $\veps$ seems feasible without too much effort. To have in addition condition (4) we review the proof of Lemma \ref{swdp} and need to add a linear combination of suitable polynomials (see Equations \eqref{poly} and \eqref{ell}) of degrees $\leq\ell+(r-1)(\ell+1)^2\leq n+1+(r-1)(n+2)^2$, which possibly forces us to take a smaller $\veps'>0$ (see the proof of Lemma \ref{swdp}). Due to the high degree of the latter polynomials, the effective computation of the new $\veps'$ seems cumbersome, because it involves bounds of several derivatives of such polynomials on the interval $[0,1]$, see \eqref{boundm} and \eqref{bounddelta}. However, such polynomials are quite standard and the bounds for its derivatives on the interval $[0,1]$ can be computed once and then used repeatedly when needed.

To estimate the degree $\nu$ of the polynomial path $\alpha:\R\to\R^n$ we use Theorem \ref{cotasi}. In view of such result there exist constants $C,C_i,L_i>0$ such that if $\gamma:=(\gamma_1,\ldots,\gamma_n)$ and $\alpha:=(B_\nu(\gamma_1),\ldots,B_\nu(\gamma_n))$ for an integer $\nu\geq1$, then 
\begin{align*}
&\|\alpha-\gamma\|_{K}\leq\frac{C}{\nu^2},\\
&\|\alpha^{(e_i)}-\gamma^{(e_i)}\|_{[\tau_i,\theta_i]}<\frac{e_i(e_i-1)}{2\nu}\|\beta_i^{(e_i)}\|+\frac{C_i}{\nu^2},\quad\text{}\\
&\|\alpha^{(d_i)}-\gamma^{(d_i)}\|_{[\xi_i,\zeta_i]}<\frac{d_i(d_i-1)}{2\nu}\|\lambda_i^{(d_i)}\|+\frac{L_i}{\nu^2}.
\end{align*}
The effective computation of the constants $C,C_i,L_i>0$ requires to follow the proof of Theorem \ref{cotasi} applied to $\gamma$. The proof of Theorem \ref{cotasi} is constructive enough to make the effective computation of the constants possible, but patience is mandatory. 

We have used $\gamma|_{[\tau_i,\theta_i]}=\beta_i$ and $\gamma|_{[\xi_i,\zeta_i]}=\lambda_i$ and the fact that $\beta_i$ and $\lambda_i$ are polynomial tuples of respective degrees $e_i$ and $d_i$. In particular, $\|\beta_i^{(e_i)}\|$ and $\|\lambda_i^{(d_i)}\|$ are constants. Thus, to compute the degree $\nu$ of $\alpha$ we need
\begin{equation}\label{error1}
\min_i\Big\{\frac{C}{\nu^2},\frac{e_i(e_i-1)}{2\nu}\|\beta_i^{(e_i)}\|+\frac{2C_i}{2\nu^2},\frac{d_i(d_i-1)}{2\nu}\|\lambda_i^{(d_i)}\|+\frac{2L_i}{2\nu^2}\Big\}<\veps'.
\end{equation}
For instance, we may take
\begin{equation}\label{error2}
\nu_0:=\Big\lceil\max_i\Big\{\frac{\sqrt{C}}{\sqrt{\veps'}},\frac{\sqrt{2C_i}}{\sqrt{\veps'}},\frac{\sqrt{2L_i}}{\sqrt{\veps'}},\frac{e_i(e_i-1)}{\veps'}\|\beta_i^{(e_i)}\|,\frac{d_i(d_i-1)}{\veps'}\|\lambda_i^{(d_i)}\|\Big\}\Big\rceil+1.
\end{equation}
Then, $\nu:=\max\{n+1+(r-1)(n+2)^2,\nu_0\}$ is the degree of the searched polynomial path $\alpha:\R\to\R^n$.
\qed

\begin{remark}\label{plcasec}
In \cite{fu} we study the problem of representing (compact) semialgebraic sets $\Ss\subset\R^n$ (that are connected by analytic paths) as polynomial images of a closed unit ball $\ol{\Bb}_m(0,1)$. A relevant case is the representation of a finite union $\Ss\subset\R^n$ of $n$-dimensional convex polyhedra $\pol_\ell$ (such that $\Ss$ is connected by analytic paths) as a polynomial image of either the $(n+1)$-dimensional closed unit ball $\ol{\Bb}_{n+1}(0,1)$ or the $n$-dimensional closed unit ball $\ol{\Bb}_n(0,1)$. 

If the reader follows the proofs of \cite[Thm.1.2 \& Thm.1.3]{fu}, he realizes that the complexity of the construction concentrates on finitely many polynomial paths that can be constructed using Main Theorem \ref{plcase} (the PL version of Main Theorem \ref{nashsmart}). The polynomial maps constructed to prove \cite[Thm.1.2 \& Thm.1.3]{fu} are the composition of a polynomial map of degree $6$ (see \cite[Lem.2.5 \& Lem.2.7]{fu}) that transforms the closed unit ball $\ol{\Bb}_m(0,1)$ onto the symplicial prism $\Delta_m:=\{0\leq\x_1,\ldots,0\leq\x_m,\x_1+\cdots+\x_m\leq1\}\times[0,1]$ (for either $m=n$ or $n-1$) with polynomial maps 
$$
\varphi_m:\Delta_m\times[0,1]\to\Ss,\ (\lambda_1,\ldots,\lambda_m,t)\to\Big(1-\sum_{k=1}^m\lambda_\ell\Big)\alpha_0(t)
+\sum_{k=1}^m\lambda_\ell\alpha_k(t)
$$
where each $\alpha_k:[0,1]\to\Ss$ is a polynomial path inside $\Ss$ that passes through the vertices of the simplices of a suitable triangulation of the $n$-dimensional compact convex polyhedra $\pol_\ell$, whose union constitutes the semialgebraic set $\Ss$. As $\varphi_m$ has degree $1$ with respect to $\lambda_1,\ldots,\lambda_m$, the complexity of the involved polynomials concentrates on the construction of the mentioned polynomial paths $\alpha_k$ and one would like to estimate the degree of such polynomial paths. This can be done using Main Theorem \ref{plcase} (the PL version of Main Theorem \ref{nashsmart}).

In Main Theorem \ref{plcase} we have provided a simplified proof and consequently an estimation of the degree of such polynomial paths (see Equations \eqref{error1} and \eqref{error2}) in terms of the formulas provided in Theorem \ref{cotasi}. Using formulas \eqref{error1} or \eqref{error2}, the reader can bound the degree of the polynomial paths mentioned above. Thus, one can estimate for each $n$-dimensional PL semialgebraic set $\Ss\subset\R^n$ (connected by analytic paths) the degree of the polynomials maps from either the $(n+1)$-dimensional closed unit ball $\ol{\Bb}_{n+1}(0,1)$ or the $n$-dimensional closed unit ball $\ol{\Bb}_n(0,1)$ to $\R^n$ that represent $\Ss$.\hfill$\sqbullet$
\end{remark}

\section{Convergence of derivatives of Bernstein's polynomials on compact subsets}\label{s5}

The purpose of this section is to prove Theorem \ref{cotasi}. We recall for the sake of completeness some notation, terminology and preliminary statements from \cite{f}. Let $f:[0,1]\to\R$ be a continuous function. 

\subsection{Derivatives of divided differences of a continuous function.}
For each pair of integers $s,t\geq0$ define
$$
B_{\nu,s,t}(f)(x):=\sum_{k=0}^{\nu-s}\Big(\Big[\frac{k}{\nu},\ldots,\frac{k+s}{\nu},\underbrace{x,\ldots,x}_{\text{$t$ times}}\Big]f\Big)B_{k,\nu-s}(x)
$$
where $[x_0,\ldots,x_k]f$ denotes the \em $k$th order divided difference of $f$ at the points $x_0,\ldots,x_k\in[0,1]$\em. Write $\ell:=s+t$. If $f$ is a $\Cont^{\ell}$-function, there exists by \cite[Cor.3.4.2]{d} a value $\xi_k$ in the smallest interval that contains the points $\frac{k}{\nu},\ldots,\frac{k+s}{\nu},x$ such that
$$
\Big[\frac{k}{\nu},\ldots,\frac{k+s}{\nu},\underbrace{x,\ldots,x}_{\text{$t$ times}}\Big]f=\frac{f^{(\ell)}(\xi_k)}{\ell!}.
$$
Thus, if $x\in[0,1]$, we have by \S\ref{bpbp},
\begin{equation}\label{a1}
|B_{\nu,s,t}(f)(x)|\leq\sum_{k=0}^{\nu-s}\Big|\frac{f^{(\ell)}(\xi_k)}{\ell!}\Big|B_{k,\nu-s}\leq\frac{\|f^{(\ell)}\|_{[0,1]}}{\ell!}=\frac{\|f^{(s+t)}\|_{[0,1]}}{(s+t)!}.
\end{equation}

We have $B_{\nu,0,0}(f)=B_\nu(f)$ and by \cite[pag.133]{f}
\begin{equation}\label{bff}
B_\nu(f)(x)-f(x)=\frac{1}{\nu}x(1-x)B_{\nu,1,1}(f)(x).
\end{equation}
Differentiating \eqref{bff} at a point $x\in[0,1]$ where $f$ is differentiable, we obtain
$$
B_\nu(f)'(x)-f'(x)=\frac{1}{\nu}((1-2x)B_{\nu,1,1}(f)(x)+x(1-x)(B_{\nu,1,1}(f))'(x)).
$$
Using Leibniz rules and differentiating $\ell$ times equation \eqref{bff} (at a point $x\in[0,1]$ where $f$ is $\ell$ times differentiable), we obtain \cite[Eq.(3.2)]{f}
\begin{multline}\label{32}
(B_\nu(f))^{(\ell)}(x)-f^{(\ell)}(x)=\frac{1}{\nu}(-\ell(\ell-1)(B_{\nu,1,1}(f))^{(\ell-2)}(x)\\
+\ell(1-2x)(B_{\nu,1,1}(f))^{(\ell-1)}(x)+x(1-x)(B_{\nu,1,1}(f))^{(\ell)}(x)).
\end{multline}

Let $x\in[0,1]$ be a point such that $f$ is a $\Cont^{\ell+2}$-function on a neighborhood of $x$. By \cite[Lem.1]{f} one deduces
\begin{equation}\label{33}
(B_{\nu,1,1}(f))^{(\ell)}(x)=\ell!\sum_{k=1}^{\ell+1}k\frac{\nu-1}{\nu}\cdots\frac{\nu-k+1}{\nu}B_{\nu,k,\ell-k+2}(f)(x).
\end{equation}
Thus, if $f$ is a $\Cont^{\ell+2}$-function on $[0,1]$, we have by \eqref{a1} and the equality $\sum_{k=1}^{\ell+1}k=\frac{(\ell+2)(\ell+1)}{2}$
\begin{multline}\label{31}
|(B_{\nu,1,1}(f))^{(\ell)}(x)|\leq\ell!\sum_{k=1}^{\ell+1}k\frac{\nu-1}{\nu}\cdots\frac{\nu-k+1}{\nu}|B_{\nu,k,\ell-k+2}(f)(x)|\\
\leq\ell!\sum_{k=1}^{\ell+1}k\frac{\|f^{(\ell+2)}\|_{[0,1]}}{(\ell+2)!}=\frac{\|f^{(\ell+2)}\|_{[0,1]}}{2}.
\end{multline}

\subsection{Comparison of derivatives of Bernstein's polynomials.}
In the following result we compare on a compact subset $K$ of an open subset $\Omega\subset(0,1)$ the higher order derivatives of the corresponding Bernstein's polynomials of degree $\nu$ of two continuous functions on $[0,1]$ that coincide on $\Omega$.

\begin{lem}[Comparison]\label{comparison}
Let $f_1,f_2:[0,1]\to\R$ be continuous functions that coincide on an open set $\Omega\subset(0,1)$ and let $\ell\geq0$. Then for each compact set $K\subset\Omega$ there exists a constant $M_{K,\ell}>0$ (depending only on $K$ and $\ell$) such that
$$
|B_\nu(f_1)^{(\ell)}(x)-B_\nu(f_2)^{(\ell)}(x)|\leq\frac{M_{K,\ell}}{\nu^2}\|f_1-f_2\|_{[0,1]}
$$
for each $x\in K$.
\end{lem}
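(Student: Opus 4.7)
The plan is to set $g:=f_1-f_2\in\Cont([0,1],\R)$, which vanishes identically on $\Omega$, and prove $|B_\nu(g)^{(\ell)}(x)|\leq M_{K,\ell}\|g\|_{[0,1]}/\nu^2$ on $K$; by linearity of $B_\nu$ this gives the statement. I will set $\delta:=\dist(K,[0,1]\setminus\Omega)>0$ (positive by compactness of $K$ and openness of $\Omega$) and exploit the forward-difference representation
$$
B_\nu(g)^{(\ell)}(x)=\frac{\nu!}{(\nu-\ell)!}\sum_{i=0}^{\nu-\ell}\Delta_{1/\nu}^{\ell}g(i/\nu)\,B_{i,\nu-\ell}(x)
$$
from \S\ref{bpbp}(iii). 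Two observations will drive the estimate: the very few nodes $i$ that can contribute to the sum, and the rapid decay of the binomial weights $B_{i,\nu-\ell}(x)$ far from their mean.

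First, since $g\equiv0$ on $\Omega$, the difference $\Delta_{1/\nu}^{\ell}g(i/\nu)=\sum_{j=0}^{\ell}(-1)^{\ell-j}\binom{\ell}{j}g((i+j)/\nu)$ is nonzero only when some $(i+j)/\nu\in[0,1]\setminus\Omega$, which for $x\in K$ forces $|x-(i+j)/\nu|\geq\delta$ and hence $|x-i/\nu|\geq\delta-\ell/\nu\geq\delta/2$ as soon as $\nu\geq 2\ell/\delta$. Since $|i/\nu-i/(\nu-\ell)|\leq\ell/\nu$, one also gets $|x-i/(\nu-\ell)|\geq\delta/4$ for $\nu\geq 4\ell/\delta$. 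Second, the numbers $\{B_{i,\nu-\ell}(x)\}_i$ form the pmf of $\Bb(\nu-\ell,x)$ (see \S\ref{bpbp}), so I can control the mass they place outside a window of radius $\delta/4$ around the mean via Chebyshev's inequality applied to a high-order central moment.

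More precisely, I will establish by induction on $m$, starting from the second-moment identity $\sum_k(k-\mu x)^2 B_{k,\mu}(x)=\mu x(1-x)$ recalled in \S\ref{bpbp} and using the recursion $B_{k,\mu}(x)=xB_{k-1,\mu-1}(x)+(1-x)B_{k,\mu-1}(x)$, a bound of the form
$$
\sum_{i=0}^{\mu}(i-\mu x)^{2m}B_{i,\mu}(x)\leq c_m\,\mu^{m}\qquad\text{uniformly in }x\in[0,1],
$$
with $c_m$ depending only on $m$. Taking $m:=\ell+2$ and $\mu:=\nu-\ell$, Chebyshev yields
$$
\sum_{i:\,|i/(\nu-\ell)-x|\geq\delta/4}B_{i,\nu-\ell}(x)\leq\frac{c_{\ell+2}\cdot 4^{2\ell+4}}{(\nu-\ell)^{\ell+2}\delta^{2\ell+4}}.
$$

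Combining this with the trivial bounds $|\Delta_{1/\nu}^{\ell}g(i/\nu)|\leq 2^{\ell}\|g\|_{[0,1]}$ and $\nu!/(\nu-\ell)!\leq\nu^{\ell}$ produces
$$
|B_\nu(g)^{(\ell)}(x)|\leq\nu^{\ell}\cdot 2^{\ell}\|g\|_{[0,1]}\cdot\frac{c_{\ell+2}\cdot 4^{2\ell+4}}{(\nu-\ell)^{\ell+2}\delta^{2\ell+4}},
$$
and for $\nu\geq 2\ell$ the factor $\nu^{\ell}/(\nu-\ell)^{\ell+2}$ is $O(1/\nu^2)$, giving the desired bound. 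The finitely many small values of $\nu$ (namely $\nu<4\ell/\delta$) can be absorbed by enlarging $M_{K,\ell}$. The main obstacle is the higher-order central moment estimate for the binomial: it is classical but not recorded in the paper, and carrying out its inductive proof in a self-contained way within the framework of \S\ref{bpbp} requires some care; an alternative is to invoke Hoeffding's (or a Bernstein-type) exponential tail bound, which gives stronger-than-polynomial decay and makes the step even quicker, at the cost of citing an external result.
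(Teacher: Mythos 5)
Your proposal is correct, and it reaches the conclusion by a genuinely different route than the paper. The paper writes $f:=f_1-f_2$ and expands $B_\nu^{(\ell)}(f)$ via the explicit formula of \cite[Ch.4.Prop.4.4]{dl} for $\frac{d^\ell}{d\x^\ell}(\x^k(1-\x)^{\nu-k})$, which produces the prefactor $\x^{-\ell}(1-\x)^{-\ell}$ (harmless since $K\subset(0,1)$) and a sum of terms $\nu^{i+j}\sum_k f(k/\nu)(k/\nu-\x)^jB_{k,\nu}(\x)$ with $2i+j\leq\ell$; it then uses $f(k/\nu)=0$ for $|k/\nu-x|\leq\delta$ together with the tail estimate $\sum_{|k/\nu-x|>\delta}B_{k,\nu}(x)\leq C(\delta,i+j+2)\nu^{-(i+j+2)}$ of \cite[Ch.10.\S1.(1.6)]{dl}, so the powers of $\nu$ combine to give $\nu^{-2}$ uniformly for all $\nu$. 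You instead start from the forward-difference representation of $B_\nu^{(\ell)}$ recorded in \S\ref{bpbp}(iii), localize the support of $\Delta_{1/\nu}^{\ell}g$ away from $x$, and kill the binomial mass there by Chebyshev applied to a central moment of order $2(\ell+2)$. Both arguments are sound and yield the same rate; what the paper's version buys is uniformity in $\nu$ with no index shifts or case distinctions, since it works directly with the nodes $k/\nu$ and the original weights $B_{k,\nu}$, whereas your version stays closer to the identities already stated in \S\ref{bpbp} at the cost of the $\delta\to\delta/2\to\delta/4$ bookkeeping and the absorption of finitely many small $\nu$ into the constant (which is legitimate, as $M_{K,\ell}$ may depend on $\delta$). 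The one ingredient you flag as external, the even central moment bound $\sum_i(i-\mu x)^{2m}B_{i,\mu}(x)\leq c_m\mu^m$, is in fact exactly the estimate the paper itself invokes as \cite[Ch.10.\S1.(1.5)]{dl} in Step 2 of the proof of Theorem \ref{cotasi}, so you may simply cite it rather than redo the induction.
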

\begin{proof}
Let $i,j,\ell\geq0$ be such that $2i+j\leq\ell$. By \cite[Ch.4.Prop.4.4]{dl} there exist polynomials $q_{ij\ell}\in\R[\x]$ that do not depend on $\nu,k$ such that
$$
\frac{d^\ell}{d\x^\ell}(\x^k(1-\x)^{\nu-k})=\x^{k-\ell}(1-\x)^{\nu-k-\ell}\sum_{2i+j\leq\ell}\nu^i(k-\nu\x)^jq_{ij\ell}(\x).
$$
Write $f:=f_1-f_2$, which is identically $0$ on $K$. Observe that
\begin{equation}\label{key}
\begin{split}
B_\nu^{(\ell)}(f)&=\sum_{k=0}^\nu f\Big(\frac{k}{\nu}\Big)\binom{\nu}{k}\frac{d^\ell}{d\x^\ell}(\x^k(1-\x)^{\nu-k})\\
&=\sum_{k=0}^\nu f\Big(\frac{k}{\nu}\Big)\binom{\nu}{k}\x^{k-\ell}(1-\x)^{\nu-k-\ell}\sum_{2i+j\leq\ell}\nu^i(k-\nu\x)^jq_{ij\ell}(\x)\\
&=\frac{1}{\x^\ell(1-\x)^{\ell}}\sum_{k=0}^\nu f\Big(\frac{k}{\nu}\Big)B_{k,\nu}(\x)\sum_{2i+j\leq\ell}\nu^i(k-\nu\x)^jq_{ij\ell}(\x)\\
&=\frac{1}{\x^\ell(1-\x)^{\ell}}\sum_{2i+j\leq\ell}q_{ij\ell}(\x)\nu^{i+j}\sum_{k=0}^\nu f\Big(\frac{k}{\nu}\Big)\Big(\frac{k}{\nu}-\x\Big)^jB_{k,\nu}(\x).
\end{split}
\end{equation}
Let $\delta:=\dist(K,[0,1]\setminus\Omega)>0$ and observe that if $x\in K$ and $|\frac{k}{\nu}-x|\leq\delta$, then $f(\frac{k}{\nu})=0$. By \cite[Ch.10.\S1.(1.6), pag. 304]{dl} there exists a constant $C(\delta,i+j+2)$ such that
\begin{equation}\label{ineq}
\sum_{|\frac{k}{\nu}-x|>\delta}B_{k,\nu}(x)\leq C(\delta,i+j+2)\frac{1}{\nu^{i+j+2}}.
\end{equation}
Thus, by \eqref{key}, \eqref{ineq} and as $B_\nu(f_1)^{(\ell)}(x)-B_\nu(f_2)^{(\ell)}(x)=B_\nu^{(\ell)}(f)(x)$ and $|\frac{k}{\nu}-x|\leq1$,
\begin{multline*}
|B_\nu(f_1)^{(\ell)}(x)-B_\nu(f_2)^{(\ell)}(x)|\leq\frac{1}{x^\ell(1-x)^{\ell}}\sum_{2i+j\leq\ell}|q_{ij\ell}(x)|\nu^{i+j}\sum_{|\frac{k}{\nu}-x|>\delta}\Big|f\Big(\frac{k}{\nu}\Big)\Big|B_{k,\nu}(x)\\
\leq\Big(\frac{1}{x^\ell(1-x)^{\ell}}\sum_{2i+j\leq\ell}|q_{ij\ell}(x)|C(\delta,i+j+2)\Big)\frac{1}{\nu^2}\|f\|_{[0,1]}
\end{multline*}
for each $x\in K$. Now, the statement follows readily.
\end{proof}

\subsection{Some bounds for derivatives of Taylor polynomials.}
Let $f:[0,1]\to\R$ be a continuous function that is $\Cont^\ell$ on an open subset $\Omega\subset[0,1]$. Define
$$
T^\ell f:\Omega\times[0,1]\to\R,\ (y,x)\mapsto\sum_{k=0}^\ell\frac{f^{(k)}(y)}{k!}(x-y)^k.
$$
We have
$$
\frac{\partial^m}{\partial\x^m}T^\ell f=\sum_{k=m}^\ell\frac{f^{(k)}(\y)}{(k-m)!}(\x-\y)^{k-m}.
$$
If $K\subset\Omega$ is a compact set,
\begin{align*}
&\|T^\ell f\|_{K\times[0,1]}:=\max\{T^\ell f(y,x):\ (y,x)\in K\times[0,1]\},\\
&\|(T^\ell f)^{(m)}\|_{K\times[0,1]}:=\Big\|\frac{\partial^m}{\partial\x^m}T^\ell f\Big\|_{K\times[0,1]}:=\max\Big\{\frac{\partial^m}{\partial\x^m}T^\ell f(y,x):\ (y,x)\in K\times[0,1]\Big\}.
\end{align*}
As the points $x,y\in[0,1]$, we deduce
\begin{equation}\label{34}
\|(T^\ell f)^{(m)}\|_{K\times[0,1]}\leq\sum_{k=m}^\ell\frac{\|f^{(k)}\|_K}{(k-m)!}.
\end{equation}
In particular, $\|(T^\ell f)^{(\ell)}\|_{K\times[0,1]}\leq\|f^{(\ell)}\|_K$. 

\subsection{Proof of Theorem \ref{cotasi}}
The proof is conducted in several steps:

\noindent{\sc Step 1. Initial preparation.}
Define
$$
P:=T^{\ell+3}f:\Omega\times[0,1]\to\R, (y,x)\mapsto\sum_{k=0}^{\ell+3}\frac{f^{(k)}(y)}{k!}(x-y)^k.
$$
We claim: {\em There exists a function $g:\Omega\times[0,1]\to\R$ such that $h(y,x):=f(x)-P(y,x)=g(y,x)(x-y)^{\ell+4}$ on $\Omega\times[0,1]$ and for each compact set $K\subset\Omega$ there exists a constant $N_{f,K}>0$ such that $|g(y,x)|<N_{f,K}$ for each $(y,x)\in K\times[0,1]$} (see also Remark \ref{cotasir}).

Define
$$
g:\Omega\times[0,1]\to\R,\ (y,x)\mapsto
\begin{cases}
\frac{h(y,x)}{(x-y)^{\ell+4}}&\text{if $x\neq y$,}\\
0&\text{otherwise.}
\end{cases}
$$
Observe that $g$ is continuous on $(\Omega\times[0,1])\setminus\Delta$ where $\Delta:=\{(x,x)\in\Omega\times[0,1],\ x\in\Omega\}$.

Fix a compact set $K\subset\Omega$. For each $x\in K$ choose $\veps_x>0$ such that $[x-2\veps_x,x+2\veps_x]\subset\Omega$. As $K$ is a compact set, there exist $x_1,\ldots,x_k\in K$ such that $K\subset K':=\bigcup_{j=1}^k[x_j-\veps_{x_j},x_j+\veps_{x_j}]$. As $f^{(\ell+4)}$ is continuous in $\Omega$, there exists a constant $N_{1,f,K''}>0$ such that $|f^{(\ell+4)}(z)|\leq N_{1,f,K''}(\ell+4)!$ for each $z\in K'':=\bigcup_{j=1}^k[x_j-2\veps_{x_j},x_j+2\veps_{x_j}]$. Define $L_j:=[0,1]\setminus(x_j-2\veps_{x_j},x_j+2\veps_{x_j})$ and observe that 
$$
K'\times[0,1]=\bigcup_{j=1}^k([x_j-\veps_{x_j},x_j+\veps_{x_j}]\times[x_j-2\veps_{x_j},x_j+2\veps_{x_j}])\cup\bigcup_{j=1}^k([x_j-\veps_{x_j},x_j+\veps_{x_j}]\times L_j)
$$
As $\Delta\cap(\bigcup_{j=1}^k[x_j-\veps_{x_j},x_j+\veps_{x_j}]\times L_j)=\varnothing$, the function $g$ is continuous on the compact set $\bigcup_{j=1}^k[x_j-\veps_{x_j},x_j+\veps_{x_j}]\times L_j$, so there exists $N_{2,f,K'}>0$ such that $|g(y,x)|<N_{2,f,K'}$ for each $(y,x)\in\bigcup_{j=1}^k[x_j-\veps_{x_j},x_j+\veps_{x_j}]\times L_j$.

As $f$ is $\Cont^{\ell+4}$ on $\Omega$, for each $(y,x)\in[x_j-\veps_{x_j},x_j+\veps_{x_j}]\times[x_j-2\veps_{x_j},x_j+2\veps_{x_j}]$ there exists by Lagrange form of the remainder of Taylor's theorem $\zeta_{(y,x)}\in[x_j-2\veps_{x_j},x_j+2\veps_{x_j}]\subset K''$ such that
$$
h(y,x)=\frac{f^{(\ell+4)}(\zeta_{(y,x)})}{(\ell+4)!}(x-y)^{\ell+4},
$$ 
so $g(y,x)=\frac{f^{(\ell+4)}(\zeta_{(y,x)})}{(\ell+4)!}$ and $|g(y,x)|<N_{1,f,K''}$ for each $(y,x)\in\bigcup_{j=1}^k[x_j-\veps_{x_j},x_j+\veps_{x_j}]\times[x_j-2\veps_{x_j},x_j+2\veps_{x_j}]$. Thus, if we define $N_{f,K}:=\max\{N_{1,f,K''},N_{2,f,K'}\}$, the claim is proved.

Define $P_y:=P(y,\cdot)$ and $h_y:=h(y,\cdot)$ and fix a compact set $K\subset\Omega$. Observe that $P_y^{(k)}(y)=f^{(k)}(y)$ for each $y\in K$ and each $k=0,\ldots,\ell+3$. We have $B_{\nu}(f)=B_{\nu}(P_y)+B_{\nu}(h_y)$. Consequently, for each $y\in K$
\begin{equation}\label{bernstein}
B_{\nu}^{(\ell)}(f)=B_{\nu}^{(\ell)}(P_y)+B_{\nu}^{(\ell)}(h_y)
\end{equation}
(we are considering derivatives and Bernstein's polynomials with respect to the variable $\x$). 

\noindent{\sc Step 2. Uniform control of the error for the Taylor polynomials.}
We claim: \em there exists a constant $C_{f,K,\ell}>0$ such that $|B_\nu^{(\ell)}(h_y)(y)|<\frac{C_{f,K,\ell}}{\nu^2}$ for each $y\in K$\em.

Let $i,j,\ell\geq0$ be such that $2i+j\leq\ell$. By \cite[Ch.4.Prop.4.4]{dl} there exist polynomials $q_{ij\ell}\in\R[\x]$ that do not depend on $\nu,k$ such that
$$
\frac{d^\ell}{d\x^\ell}(\x^k(1-\x)^{\nu-k})=\x^{k-\ell}(1-\x)^{\nu-k-\ell}\sum_{2i+j\leq\ell}\nu^i(k-\nu\x)^jq_{ij\ell}(\x).
$$
As $h_y(\frac{k}{\nu})=g(y,\frac{k}{\nu})\cdot(\frac{k}{\nu}-y)^{\ell+4}$, we have
\begin{equation*}
\begin{split}
B_\nu^{(\ell)}(h_y)&=\sum_{k=0}^\nu h_y\Big(\frac{k}{\nu}\Big)\binom{\nu}{k}\frac{d^\ell}{d\x^\ell}(\x^k(1-\x)^{\nu-k})\\
&=\sum_{k=0}^\nu h_y\Big(\frac{k}{\nu}\Big)\binom{\nu}{k}\x^{k-\ell}(1-\x)^{\nu-k-\ell}\sum_{2i+j\leq\ell}\nu^i(k-\nu\x)^jq_{ij\ell}(\x)\\
&=\frac{1}{\x^\ell(1-\x)^{\ell}}\sum_{k=0}^\nu h_y\Big(\frac{k}{\nu}\Big)B_{k,\nu}(\x)\sum_{2i+j\leq\ell}\nu^{i+j}\Big(\frac{k}{\nu}-\x\Big)^jq_{ij\ell}(\x)\\
&=\frac{1}{\x^\ell(1-\x)^{\ell}}\sum_{2i+j\leq\ell}q_{ij\ell}(\x)\nu^{i+j}\sum_{k=0}^\nu g\Big(y,\frac{k}{\nu}\Big)\Big(\frac{k}{\nu}-y\Big)^{\ell+4}\Big(\frac{k}{\nu}-\x\Big)^jB_{k,\nu}(\x).
\end{split}
\end{equation*}
We have proved above that there exists a constant $N_{f,K}>0$ such that $|g(y,x)|<N_{f,K}$ for each $(y,x)\in K\times[0,1]$. Recall that $2(i+j)\leq\ell+j$ and $|\frac{k}{\nu}-y|\leq1$. If we set $\x=y$, we have
\begin{multline}\label{just}
\Big|\sum_{k=0}^\nu g\Big(y,\frac{k}{\nu}\Big)\Big(\frac{k}{\nu}-y\Big)^{\ell+j+4}B_{k,\nu}(y)\Big|\leq N_{f,K}\frac{1}{\nu^{2(i+j)+4}}\sum_{k=0}^\nu(k-\nu y)^{2(i+j)+4}B_{k,\nu}(y)\\
\leq N_{f,K}\frac{1}{\nu^{2(i+j)+4}}A_{i+j+2}\nu^{i+j+2}
\leq N_{f,K}A_{i+j+2}\frac{1}{\nu^{i+j+2}} 
\end{multline}
for a constant $A_{i+j+2}>0$ (see \cite[Ch.10.\S1.(1.5), pag. 304]{dl}). Consequently, 
$$
|B_\nu^{(\ell)}(h_y)(y)|\leq\Big(\frac{1}{y^\ell(1-y)^{\ell}}\sum_{2i+j\leq\ell}|q_{ij\ell}(y)|N_{f,K}A_{i+j+2}\Big)\frac{1}{\nu^2}
$$
and the claim follows if we take $C_{f,K,\ell}:=\|\frac{1}{\y^\ell(1-\y)^{\ell}}\|_K\sum_{2i+j\leq\ell}\|q_{ij\ell}\|_KN_{f,K}A_{i+j+2}$. 

\noindent{\sc Step 3. Proof of the first part of the statement.}
If $x\in K$, we have using {\sc Step 2} (because $P_x^{(k)}(x)=f^{(k)}(x)$ for each $x\in K$ and each $k=0,\ldots,\ell+3$)
\begin{multline}\label{35}
|B_\nu^{(\ell)}(f)(x)-f^{(\ell)}(x)|\leq|B_\nu^{(\ell)}(P_x)(x)-f^{(\ell)}(x)|+|B_\nu^{(\ell)}(h_x)(x)|\\
\leq|B_\nu^{(\ell)}(P_x)(x)-P_x^{(\ell)}(x)|+\frac{C_{f,K,\ell}}{\nu^2}.
\end{multline}
By \eqref{32} and \eqref{31} applied to $P_x$ we obtain
\begin{multline*}
|B_\nu^{(\ell)}(P_x)(x)-P_x^{(\ell)}(x)|\leq\frac{1}{2\nu}(\ell(\ell-1)\|P_x^{(\ell)}\|_{[0,1]}\\
+\ell|1-2x|\|P_x^{(\ell+1)}\|_{[0,1]}+x(1-x)\|P_x^{(\ell+2)}\|_{[0,1]}).
\end{multline*}
By \eqref{34} we deduce
\begin{multline*}
|B_\nu^{(\ell)}(P_x)(x)-P_x^{(\ell)}(x)|\leq\frac{1}{2\nu}\Big(\ell(\ell-1)\sum_{k=\ell}^{\ell+3}\frac{\|f^{(k)}\|_K}{(k-\ell)!}\\
+\ell|1-2x|\sum_{k=\ell+1}^{\ell+3}\frac{\|f^{(k)}\|_K}{(k-\ell-1)!}+x(1-x)\sum_{k=\ell+2}^{\ell+3}\frac{\|f^{(k)}\|_K}{(k-\ell-2)!}\Big)
\end{multline*}
for each $x\in K$ and the first part of the statement holds.

\noindent{\sc Step 4. Bound of the error.} {\em For each $\veps>0$ and each pair of integers $s,t\geq0$ such that $\lambda:=s+t\leq\ell+2$ there exists a constant $C_{f,K,\lambda,\veps}>0$ such that
\begin{equation}\label{st4}
\Big|B_{\nu,s,t}(P_y)(x)-\frac{P_y^{(\lambda)}}{\lambda!}(x)\Big|<\frac{\veps}{\lambda!}+\frac{C_{f,K,\lambda,\veps}}{\nu}
\end{equation}
for each $(y,x)\in K\times[0,1]$ and each $\nu>s$.} In particular, {\em $B_{\nu,s,t}(P_y)$ converges to $\frac{P_y^{(\lambda)}}{\lambda!}$ uniformly on $K\times[0,1]$ when $\nu\to\infty$.}

We will follow the proof of \cite[Lem.2]{f} making the suitable needed changes. Fix integers $s,t\geq0$ and denote $\lambda:=s+t$. Next fix $\nu\geq s$ and $0\leq k\leq\nu-s$. Fix $\veps>0$ and let $\delta>0$ be such that if $(y,x),(y',x')\in K\times[0,1]$ satisfy $|x-x'|<\delta$ and $|y-y'|<\delta$, then $|P_y^{(\lambda)}(x)-P_{y'}^{(\lambda)}(x')|<\veps$ (recall that $P_z$ is $\Cont^{\ell+4}$ on $[0,1]$ for each $z\in K$). Fix $x\in[0,1]$ and let 
$$
I_\nu:=\Big\{k\in\{0,\ldots,\nu-s\}:\ x-\delta<\frac{k}{\nu}<\frac{k+s}{\nu}<x+\delta\Big\}.
$$
Fix $y\in K$ and pick $\xi_k$ in the smallest interval that contains the points $x,\frac{k}{\nu},\ldots,\frac{k+s}{\nu}$ such that
$$
\Big[\frac{k}{\nu},\ldots,\frac{k+s}{\nu},\underbrace{x,\ldots,x}_{\text{$t$ times}}\Big]P_y=\frac{P_y^{(\lambda)}(\xi_k)}{\lambda!}.
$$
Consequently,
$$
B_{\nu,s,t}(P_y)(x)=\sum_{k=0}^{\nu-s}\Big(\Big[\frac{k}{\nu},\ldots,\frac{k+s}{\nu},\underbrace{x,\ldots,x}_{\text{$t$ times}}\Big]P_y\Big)B_{k,\nu-s}(x)=\frac{1}{\lambda!}\sum_{k=0}^{\nu-s}P_y^{(\lambda)}(\xi_k)B_{k,\nu-s}(x).
$$
Define
$$
S_\nu:=\lambda!B_{\nu,s,t}(P_y)(x)-P_y^{(\lambda)}(x)=\sum_{k=0}^{\nu-s}(P_y^{(\lambda)}(\xi_k)-P_y^{(\lambda)}(x))B_{k,\nu-s}(x).
$$
Write $S_\nu=C_\nu+D_\nu$ where
\begin{align*}
C_\nu&:=\sum_{k\in I_\nu}(P_y^{(\lambda)}(\xi_k)-P_y^{(\lambda)}(x))B_{k,\nu-s}(x),\\
D_\nu&:=\sum_{k\not\in I_\nu}(P_y^{(\lambda)}(\xi_k)-P_y^{(\lambda)}(x))B_{k,\nu-s}(x).
\end{align*}
If $k\in I_\nu$, we have $|\xi_k-x|<\delta$, so
$$
|C_\nu|\leq\sum_{k\in I_\nu}\veps B_{k,\nu-s}(x)\leq\veps.
$$

Regarding $D_\nu$, define 
\begin{equation}\label{boundmf}
M_{f,K,\lambda}:=\max\Big\{\Big|\frac{\partial^\lambda P}{\partial\x^\lambda}(y,x)\Big|:\ (y,x)\in K\times[0,1]\Big\}
\end{equation}
for $\lambda=0,\ldots,\ell+2$. If $0\leq k\leq\nu-s$, we have
{\small\begin{align*}
\Big|\frac{k}{\nu}-x\Big|&\leq\Big|\frac{k}{\nu-s}-x\Big|+\Big|\frac{k}{\nu}-\frac{k}{\nu-s}\Big|\leq\Big|\frac{k}{\nu-s}-x\Big|+\frac{s}{\nu}\frac{k}{\nu-s}\leq\Big|\frac{k}{\nu-s}-x\Big|+\frac{s}{\nu},\\
\Big|\frac{k+s}{\nu}-x\Big|&\leq\Big|\frac{k}{\nu-s}-x\Big|+\Big|\frac{k+s}{\nu}-\frac{k}{\nu-s}\Big|\leq\Big|\frac{k}{\nu-s}-x\Big|+\frac{s}{\nu}\Big(1-\frac{k}{\nu-s}\Big)\leq\Big|\frac{k}{\nu-s}-x\Big|+\frac{s}{\nu}.
\end{align*}}
Consequently,
$$
\max\Big\{\Big(\frac{k}{\nu}-x\Big)^2,\Big(\frac{k+s}{\nu}-x\Big)^2\Big\}\leq\Big(\frac{k}{\nu-s}-x\Big)^2+2\frac{s}{\nu}+\frac{s^2}{\nu^2}.
$$
For each $k\not\in I_\nu$ we have
$$
\delta^2\leq\max\Big\{\Big(\frac{k}{\nu}-x\Big)^2,\Big(\frac{k+s}{\nu}-x\Big)^2\Big\}\leq\Big(\frac{k}{\nu-s}-x\Big)^2+2\frac{s}{\nu}+\frac{s^2}{\nu^2}.
$$
We deduce $1\leq\frac{1}{\delta^2}((\frac{k}{\nu-s}-x)^2+2\frac{s}{\nu}+\frac{s^2}{\nu^2})$ and as $|P_y^{(\lambda)}(\xi_k)-P_y^{(\lambda)}(x)|\leq 2 M_{f,K,\lambda}$, we conclude using \S\ref{bpbp} (concretely the property of the variance of a binomial distribution)
\begin{multline*}
|D_\nu|\leq\frac{2}{\delta^2}M_{f,K,\lambda}\sum_{k\not\in I_\nu}\Big(\Big(\frac{k}{\nu-s}-x\Big)^2+2\frac{s}{\nu}+\frac{s^2}{\nu^2}\Big)B_{k,\nu-s}(x)\\
\leq\frac{2}{\delta^2}M_{f,K,\lambda}\Big(2\frac{s}{\nu}+\frac{s^2}{\nu^2}+\sum_{k=0}^{\nu-s}\Big(\frac{k}{\nu-s}-x\Big)^2B_{k,\nu-s}(x)\Big)=\frac{2}{\delta^2}M_{f,K,\lambda}\Big(2\frac{s}{\nu}+\frac{s^2}{\nu^2}+\frac{x(1-x)}{\nu-s}\Big).
\end{multline*}
As $s<\nu$, we obtain $0<\frac{s}{\nu},\frac{1}{\nu-s}<1$, so
$$
2\frac{s}{\nu}+\frac{s^2}{\nu^2}+\frac{x(1-x)}{\nu-s}\leq\frac{3s}{\nu}+\frac{1}{2\nu}\Big(1+\frac{s}{\nu-s}\Big)<\frac{1+7s}{2\nu}\leq\frac{1+7\lambda}{2\nu}.
$$
Thus, if $\nu>s$, then
$$
|S_\nu|\leq|C_\nu|+|D_\nu|<\veps+\frac{2}{\delta^2}M_{f,K,\lambda}\frac{1+7\lambda}{2\nu}=\veps+\frac{C_{f,K,\lambda,\veps}\lambda!}{\nu}
$$
for the constant $C_{f,K,\lambda,\veps}:=\frac{1+7\lambda}{\delta^2\lambda!}M_{f,K,\lambda}>0$.

\noindent{\sc Step 5. Proof of the second part of the statement.} Fix $\nu>\ell$. By Remark \ref{derf} and \eqref{32} we have
\begin{multline}\label{36}
\nu((B_\nu(P_y))^{(\ell)}(x)-P_y^{(\ell)}(x))-\frac{1}{2}\frac{\partial^\ell}{\partial x^\ell}(x(1-x)P_y''(x))\\
=-\ell(\ell-1)((B_{\nu,1,1}(P_y))^{(\ell-2)}(x)-\tfrac{1}{2}P_y^{(\ell)}(x))
+\ell(1-2x)((B_{\nu,1,1}(P_y))^{(\ell-1)}(x)-\tfrac{1}{2}P_y^{(\ell+1)}(x))\\
+x(1-x)((B_{\nu,1,1}(P_y))^{(\ell)}(x))-\tfrac{1}{2}P_y^{(\ell+2)}(x)).
\end{multline}
Write $m:=\ell-2,\ell-1,\ell$. Using that $m!\sum_{k=1}^{m+1}k=\frac{(m+2)!}2{}$ we get by \eqref{33}
\begin{multline}\label{36a}
(B_{\nu,1,1}(P_y))^{(m)}(x)-\tfrac{1}{2}P_y^{(m+2)}(x)\\
=m!\sum_{k=1}^{m+1}k\frac{\nu-1}{\nu}\cdots\frac{\nu-k+1}{\nu}\Big(B_{\nu,k,m+2-k}(P_y)(x)-\frac{P_y^{(m+2)}(x)}{(m+2)!}\Big)\\
+m!\sum_{k=1}^{m+1}k\Big(\Big(1-\frac{1}{\nu}\Big)\cdots\Big(1-\frac{k-1}{\nu}\Big)-1\Big)\frac{P_y^{(m+2)}(x)}{(m+2)!}.
\end{multline}

As $m\leq\ell<\nu$, we have for $k=1,\ldots,m+1$
$$
0<\Big(1-\frac{m}{\nu}\Big)^m\leq\Big(1-\frac{1}{\nu}\Big)\cdots\Big(1-\frac{m}{\nu}\Big)\leq\Big(1-\frac{1}{\nu}\Big)\cdots\Big(1-\frac{k-1}{\nu}\Big)<1.
$$
Consequently, as $\frac{m}{\nu}<1$, we deduce
$$
0<1-\Big(1-\frac{1}{\nu}\Big)\cdots\Big(1-\frac{k-1}{\nu}\Big)<1-\Big(1-\frac{m}{\nu}\Big)^m=\sum_{q=1}^m\binom{m}{q}(-1)^{q+1}\Big(\frac{m}{\nu}\Big)^q\leq\frac{1}{\nu}m\sum_{q=1}^m\binom{m}{q}
$$
Thus, $L_m:=m\sum_{q=1}^m\binom{m}{q}>0$ satisfies
$$
\Big|\Big(1-\frac{1}{\nu}\Big)\cdots\Big(1-\frac{k-1}{\nu}\Big)-1\Big|<\frac{L_m}{\nu}
$$ 
for $k=1,\ldots,m+1$. 

Recall that $m!\sum_{k=1}^{m+1}k=\frac{(m+2)!}{2}$ and $|\frac{\nu-1}{\nu}\cdots\frac{\nu-k+1}{\nu}|<1$. As $m\leq\ell$, we have $m+2\leq\ell+2$. Consequently, by \eqref{st4} (in {\sc Step 4}), \eqref{boundmf} and \eqref{36a} we have
\begin{equation}\label{36b}
\begin{split}
|(B_{\nu,1,1}(P_y))^{(m)}(x)&-\tfrac{1}{2}P_y^{(m+2)}(x)|\\
&\leq m!\sum_{k=1}^{m+1}k\Big|\frac{\nu-1}{\nu}\cdots\frac{\nu-k+1}{\nu}\Big|\Big|B_{\nu,k,m+2-k}(P_y)(x)-\frac{P_y^{(m+2)}(x)}{(m+2)!}\Big|\\
&+m!\sum_{k=1}^{m+1}k\Big|\Big(1-\frac{1}{\nu}\Big)\cdots\Big(1-\frac{k-1}{\nu}\Big)-1\Big|\frac{|P_y^{(m+2)}(x)|}{(m+2)!}\\
&<\frac{(m+2)!}{2}\Big(\frac{\veps}{(m+2)!}+\frac{C_{f,K,m+2,\veps}}{\nu}\Big)+\frac{L_m}{2\nu}M_{f,K,m+2}\\
&=\frac{\veps}{2}+\frac{C_{f,K,m+2,\veps}(m+2)!+L_mM_{f,K,m+2}}{2\nu}
\end{split}
\end{equation}
for each $(y,x)\in K\times[0,1]$. We conclude from \eqref{36} and \eqref{36b}
\begin{multline*}
\Big|\nu((B_\nu(P_y))^{(\ell)}(x)-P_y^{(\ell)}(x))-\frac{1}{2}\frac{\partial^\ell}{\partial x^\ell}(x(1-x)P_y''(x))\Big|\\
\leq\ell(\ell-1)\Big(\frac{\veps}{2}+\frac{(C_{f,K,\ell,\veps}\ell!+L_{\ell-2}M_{f,K,\ell})}{2\nu}\Big)+\ell|1-2x|\Big(\frac{\veps}{2}+\frac{(C_{f,K,\ell+1,\veps}(\ell+1)!+L_{\ell-1}M_{f,K,\ell+1})}{2\nu}\Big)\\
+x(1-x)\Big(\frac{\veps}{2}+\frac{(C_{f,K,\ell+2,\veps}(\ell+2)!+L_\ell M_{f,K,\ell+2})}{2\nu}\Big).
\end{multline*}
If we write
\begin{align*}
\veps':=\,&(\ell^2+\tfrac{1}{4})\tfrac{\veps}{2},\\
C^\bullet_{f,K,\ell,\veps'}:=\,&\tfrac{1}{2}(\ell(\ell-1)(C_{f,K,\ell,\veps}\ell!+L_{\ell-2}M_{f,K,\ell})+\ell(C_{f,K,\ell+1,\veps}(\ell+1)!+L_{\ell-1}M_{f,K,\ell+1})\\
&+\tfrac{1}{4}(C_{f,K,\ell+2,\veps}(\ell+2)!+L_\ell M_{f,K,\ell+2})),
\end{align*}
we conclude, using that $|1-2x|\leq1$ and $x(1-x)\leq\frac{1}{4}$,
$$
\Big|\nu((B_\nu(P_y))^{(\ell)}(x)-P_y^{(\ell)}(x))-\frac{1}{2}\frac{\partial^\ell}{\partial x^\ell}(x(1-x)P_y''(x))\Big|\leq\veps'+\frac{C^\bullet_{f,K,\ell,\veps'}}{\nu}
$$
for each $(y,x)\in K\times[0,1]$. If $x\in K$ and we set $y=x$, we deduce (using \eqref{bernstein}, that is, $(B_\nu(f))^{(\ell)}=(B_\nu(P_x))^{(\ell)}+(B_\nu(h_x))^{(\ell)}$, and {\sc Step 2})
\begin{multline*}
\Big|\nu((B_\nu(f))^{(\ell)}(x)-f^{(\ell)}(x))-\frac{1}{2}\frac{\partial^\ell}{\partial x^\ell}(x(1-x)f''(x))\Big|\\
=\Big|\nu((B_\nu(P_x))^{(\ell)}(x)-P_x^{(\ell)}(x))-\frac{1}{2}\frac{\partial^\ell}{\partial x^\ell}(x(1-x)P_x''(x))\Big|+|\nu B_\nu(h_x))^{(\ell)}(x)|\\
\leq\veps'+\frac{C^\bullet_{f,K,\ell,\veps'}}{\nu}+\frac{C_{f,K,\ell}}{\nu}.
\end{multline*}
To finish it is enough to define $C_{f,K,\ell,\veps'}^*:=C^\bullet_{f,K,K,\ell,\veps'}+C_{f,K,\ell}$ (and to adjust $\veps>0$).
\qed
\begin{remark}\label{cotasir}
In the previous proof we have only used that the function $g$ introduced in the {\sc Step 1} is bounded over the sets of the form $K\times[0,1]$ where $K\subset\Omega$ is a compact set. However, it is natural to wonder about a sufficient condition to guarantee that $g$ is in addition continuous: {\em The function $g:\Omega\times[0,1]\to\R$ is continuous if $f:\Omega\to\R$ is a $\Cont^{\ell+5}$ function}. 

We have proved in this {\sc Step 1} (adapted to the case when $f$ is $\Cont^{\ell+5}$) that there exists a function $g_0:\Omega\times[0,1]\to\R$ such that $f(x)-\sum_{k=0}^{\ell+4}\frac{f^{(k)}(y)}{k!}(x-y)^k=g_0(y,x)(x-y)^{\ell+5}$ on $\Omega\times[0,1]$ and for each compact set $K\subset\Omega$ there exists a constant $N_{0,f,K}>0$ such that $|g_0(y,x)|<N_{0,f,K}$ for each $(y,x)\in K\times[0,1]$.

Define $g(y,x):=\frac{f^{(\ell+4)}}{(\ell+4)!}(y)+(x-y)g_0(y,x)$ for each $(y,x)\in\Omega\times[0,1]$. Observe that $g_0$ is continuous outside $\Delta:=\{(x,x)\in\Omega\times[0,1]:\ x\in\Omega\}$ and it is bounded on any compact neighborhood of each point of $\Delta$ inside $\Omega\times[0,1]$. Thus, $h(y,x):=(x-y)g_0(y,x)$ is continuous on $\Omega\times[0,1]$. Consequently, $g$ is continuous on $\Omega\times[0,1]$, as required.\hfill$\sqbullet$
\end{remark}

\appendix
\section{Modification of continuous semialgebraic paths.}\label{A}
In the proof of Main Theorem \ref{nashsmart} we needed to slightly modify continuous semialgebraic paths to avoid certain algebraic sets (except for finitely many points), but keeping essentially their behavior. In order to make the proof of such result more intuitive, we have postponed such modification until now. The reader can find by himself many other ways to modify continuous semialgebraic paths in the needed way. However, we include the precise technicalities for the sake of completeness here.

\begin{lem}[Modification of continuous semialgebraic paths]\label{modification}
Let $\Ss\subset\R^n$ be a pure dimensional semialgebraic set and $\Ss_1,\ldots,\Ss_r$ open connected semialgebraic subsets of $\Reg(\Ss)$ (non-necessarily pairwise different). Pick control points $p_i\in\cl(\Ss_i)$ for $i=1,\ldots,r$ and $q_i\in\cl(\Ss_i)\cap\cl(\Ss_{i+1})$ for $i=1,\ldots,r-1$. Fix control times $s_0:=0<t_1<\cdots<t_r<1=:s_r$ and $s_i\in(t_i,t_{i+1})$ for $i=1,\ldots,r-1$. Let $Y\subset\R^n$ be a (proper) algebraic set that does not contain any of the $\Ss_i$ and let $\beta:[0,1]\to\R^n$ be a continuous semialgebraic path such that: 
\begin{itemize}
\item[(i)] $\beta([0,1])\subset\bigcup_{i=1}^r\Ss_i\cup\{p_1,\ldots,p_r,q_1,\ldots,q_{r-1}\}$,
\item[(ii)] $\beta(t_i)=p_i$ for $i=1,\ldots,r$ and $\beta(s_i)=q_i$ for $i=1,\ldots,r-1$,
\item[(iii)] $\beta((t_i,s_i))\subset\Ss_i$ for $i=1,\ldots,r$ and $\beta((s_i,t_{i+1}))\subset\Ss_{i+1}$ for $i=1,\ldots,r-1$,
\item[(iv)] $\eta(\beta)\subset(0,1)\setminus\{t_1,\ldots,t_r,s_1,\ldots,s_{r-1}\}$ and $\beta(\eta(\beta))\subset\bigcup_{i=1}^r\Ss_i$.
\end{itemize}
Then, for each $\veps>0$ there exists a continuous semialgebraic path $\beta^*:[0,1]\to\R^n$ satisfying conditions {\em(i)}, {\em(ii)}, {\em(iii)} and {\em(iv)} above and such that $(\beta^*)^{-1}(Y)$ is a finite set, $\eta(\beta^*)\cap(\beta^*)^{-1}(Y)=\varnothing$, $\beta^*(\eta(\beta^*))\subset\bigcup_{i=1}^r\Ss_i$ and $\|\beta-\beta^*\|<\veps$.
\end{lem}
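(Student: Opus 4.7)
The plan is to decompose the domain along the finite set $F:=\eta(\beta)\cup\{t_1,\ldots,t_r,s_1,\ldots,s_{r-1}\}\cup\{0,1\}$, so that on each open component $J=(a,b)$ of $[0,1]\setminus F$ the restriction $\beta|_J$ is an (analytic) Nash map whose image is contained in exactly one of the $\Ss_k$. By the Identity Principle applied to the analytic functions $f\circ\beta|_J$, where $f$ ranges over a finite set of polynomial equations of the components of $Y$, the intersection $\beta^{-1}(Y)\cap J$ is either finite or equals $J$; call $J$ \emph{bad} in the second case. There are only finitely many bad components, and outside them $\beta^{-1}(Y)$ is already finite. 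So the task reduces to modifying $\beta$ on each bad component to an $\R^n$-close continuous semialgebraic path whose image meets $Y$ in only finitely many points, while preserving the boundary data and the smoothness behavior at the endpoints.

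For a bad component $J=(a,b)$ whose closure does not meet $\{t_1,\ldots,t_r,s_1,\ldots,s_{r-1}\}$, both endpoints already lie in $\eta(\beta)\cup\{0,1\}$, so $\beta$ is \emph{a priori} non-Nash at them and any new corners we create there are harmless for the set $\eta(\beta^\ast)$. Since $\beta(J)\subset Y\cap\Ss_k$ and $\dim(Y\cap\Ss_k)<\dim\Ss_k$, the complement $\Ss_k\setminus Y$ is dense in $\Ss_k$, and we can choose a Nash vector field $v(t)$ along $\beta|_J$ whose value $v(t_0)$ at some $t_0\in J$ is not tangent to $Y$ inside $\Ss_k$. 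Define
\[
\beta^\ast|_J(t):=\beta(t)+\lambda\,(t-a)(b-t)\,v(t)
\]
and restrict to a Nash tubular neighborhood of $\Ss_k$ (or, since $\Ss_k$ is open in its ambient Nash manifold $\Reg(\Ss)$, use a Nash retraction onto $\Reg(\Ss)$). For $\lambda>0$ small, continuity gives $\beta^\ast(J)\subset\Ss_k$, and $(f\circ\beta^\ast)|_J$ is a non-trivial Nash function of $t$ (non-triviality at $t_0$ is forced by the transversality of $v(t_0)$), hence vanishes at only finitely many points of $J$. The endpoint values and the inclusion in $\Ss_k$ are preserved, and $\|\beta-\beta^\ast\|_J$ can be made smaller than $\veps$.

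For a bad component $J$ adjacent to some $t_i$ or $s_i$, we cannot perturb naively, because $\beta$ was originally Nash at that endpoint and we must not introduce a corner there (otherwise the new non-Nash point $t_i\in\eta(\beta^\ast)$ would map to $p_i$, which could fail to lie in $\bigcup\Ss_i$). In this case $\beta$ itself is a Nash arc (respectively a Nash bridge) on a neighborhood of $t_i$ (respectively $s_i$), entirely contained in $Y$, so $p_i\in Y$ (respectively $q_i\in Y$). Using that $p_i\in\cl(\Ss_i)$ and $\Ss_i\setminus Y$ is dense in the pure $d$-dimensional Nash manifold $\Ss_i$, we apply the double Nash curve selection lemma (Lemma \ref{doublecurve}), together with the recipe in its proof for avoiding a prescribed lower-dimensional algebraic set, to obtain a brand-new Nash arc $\widetilde\alpha_i:[t_i-\delta,t_i+\delta]\to\Ss_i\cup\{p_i\}$ with $\widetilde\alpha_i(t_i)=p_i$ and $\widetilde\alpha_i^{-1}(Y)\subset\{t_i\}$; the analogous construction with Lemma \ref{doublecurve} in the set $\Ss_i\cup\Ss_{i+1}\cup\{q_i\}$ (which is connected by analytic paths by \cite[Main Thm.1.1 \& Prop.7.6]{f1}) supplies a replacement Nash bridge around each $s_i$. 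These local replacements are then stitched to the unchanged part of $\beta$ on neighboring good components by short Nash paths inside $\Ss_i\setminus Y$ (using \cite[Thm.1.5 \& Lem.7.7]{f1}), exactly as in Step 1 of the proof of Main Theorem \ref{nashsmart}; the stitching points live in good components so only contribute finitely many intersections with $Y$.

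The main obstacle is precisely this last kind of bad component, because we have to simultaneously guarantee four incompatible-looking requirements: $\beta^\ast$ must agree with $\beta$ at the control/base times and remain Nash there, must avoid $Y$ on a punctured neighborhood of those times, must remain inside the prescribed $\Ss_i$ (or its neighbor $\Ss_{i+1}$) on each side, and must stay $\Cont^0$-close to $\beta$. The first three are handled by choosing the replacement Nash arc/bridge via Lemma \ref{doublecurve}; closeness is then ensured by shrinking the parameter $\delta$ of the replacement and, on the remaining good components, by shrinking the perturbation parameter $\lambda$ in the previous paragraph. Finally one checks conditions (i)--(iv) for $\beta^\ast$: (i)--(iii) are preserved by construction on each subinterval, (iv) follows because $\eta(\beta^\ast)\subseteq\eta(\beta)\cup\{0,1\}$ (no new non-Nash point is created at any $t_i$ or $s_i$), and the additional conditions $(\beta^\ast)^{-1}(Y)$ finite and $\eta(\beta^\ast)\cap(\beta^\ast)^{-1}(Y)=\varnothing$ hold by the transversality/curve-selection arguments.
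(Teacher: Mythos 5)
Your overall architecture is reasonable and, for the neighborhoods of the control times, essentially matches the paper's Steps 1--2 (replace the arc/bridge at $p_i$, $q_i$ by one produced via Lemma \ref{doublecurve} that meets $Y$ only at the base point, then stitch inside small balls). The genuine gap lies in how you treat the interior bad components and, more generally, the corners of $\beta$ that already sit on $Y$. Your perturbation $\beta^*|_J(t)=\beta(t)+\lambda(t-a)(b-t)v(t)$ is built to vanish at the endpoints of a bad component $J=(a,b)$, so $\beta^*(a)=\beta(a)$ and $\beta^*(b)=\beta(b)$. But if $\beta(J)\subset Y$ then $\beta(a),\beta(b)\in Y$ by continuity ($Y$ is closed), and when $a\in\eta(\beta)$ the point $a$ in general remains a non-Nash point of the glued path (the pieces meeting at $a$ are perturbed independently or not at all). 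Hence $a\in\eta(\beta^*)\cap(\beta^*)^{-1}(Y)$, which contradicts one of the conclusions you must establish. The same failure occurs at any $a\in\eta(\beta)$ with $\beta(a)\in Y$ even when both adjacent components are good: your construction leaves good components untouched and therefore never moves such corner values off $Y$. Relatedly, the claim $\eta(\beta^*)\subseteq\eta(\beta)\cup\{0,1\}$ is not correct, since the stitching near the $t_i,s_i$ necessarily creates new corners; these are harmless only if one checks that their images avoid $Y$, which you do not do.

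The paper sidesteps all of this precisely by \emph{not} fixing the old corner values: on each segment between the shrunken neighborhoods of the control times it first extends the path so that its new endpoints are freshly chosen points $a_{i1}\in\Dd_i\setminus Y$, $b_{i1}'\in\Cc_i'\setminus Y$, and then replaces the \emph{entire} segment by a single Nash path obtained from the Nash approximation theorem \cite[Cor.8.9.6]{bcr}. Being Nash on the whole closed subinterval, that replacement contributes nothing to $\eta(\beta^*)$; its preimage of $Y$ is finite by \cite[Lem.7.7]{f1} as soon as one of its values lies off $Y$; and the only surviving corners are the junction points, whose images are forced into $\Dd_i\setminus Y$ and $\Cc_i'\setminus Y$ by the choice of the approximation tolerance $\veps'$. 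To repair your argument you would need an additional step that relocates every corner of $\beta^*$ off $Y$ (for instance, perturbing the values at the points of $\eta(\beta)$ lying over $Y$ and re-gluing), or you should adopt the global Nash approximation of the middle segments as in the paper.
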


\begin{center}
\begin{figure}[ht]
\begin{tikzpicture}[scale=1.2]

\draw[dashed,line width=1.5pt] (0,-0.5) -- (12,-0.5);
\draw[dashed,line width=1.5pt] (11,4) -- (12,4);
\draw[dashed,line width=1.5pt] (0,4) -- (7,4);

\draw[dashed,line width=1.5pt] (7,4) parabola bend (9,-0.5) (11,4);

\draw[fill=gray!100,opacity=0.5,draw=none] (0,-0.5) -- (12,-0.5) -- (12,4) -- (11,4) parabola bend (9,-0.5) (7,4) -- (0,4);
\draw[fill=white!100,opacity=1,draw=none] (3,1.5) ellipse (1cm and 2cm);
\draw[dashed,line width=1.5pt] (3,1.5) ellipse (1cm and 2cm);

\draw[color=green,dashed,line width=1pt] (1,-0.5) -- (5,-0.5) arc(0:180:2);
\draw[color=green,dashed,line width=1pt] (7,-0.5) -- (11,-0.5) arc(0:180:2) --cycle;

\draw[color=green,dashed,line width=1pt] (4.15,1.15) .. controls (4.15,1.125) and (4.75,2) .. (4.75,2.5) -- (7.15,2.5) .. controls (7.15,2) and (7.8,1) .. (7.8,1.1);
\draw[color=green,dashed,line width=1pt] (7.1,0.15) -- (6.85,0.5) -- (5.15,0.5) -- (4.9,0.1);

\draw[fill=green!100,opacity=0.25,draw=none] (1,-0.5) -- (5,-0.5) arc(0:180:2) --cycle;
\draw[fill=green!100,opacity=0.25,draw=none] (7,-0.5) -- (11,-0.5) arc(0:180:2) --cycle;

\draw[fill=green!100,opacity=0.25,draw=none] (4.15,1.15) .. controls (4.15,1.125) and (4.75,2) .. (4.75,2.5) -- (7.15,2.5) .. controls (7.15,2) and (7.8,1) .. (7.8,1.1) .. controls (7.6,1) and (7.2,0.4) .. (7.1,0.15) -- (6.85,0.5) -- (5.15,0.5) -- (4.9,0.1) .. controls (4.7,0.75) and (4.2,1.025) .. (4.15,1.15);

\draw[color=black,line width=1.5pt] (0,4) parabola bend (3,-0.5) (6,4);
\draw[color=black,line width=1.5pt] (6,4) parabola bend (9,-0.5) (12,4);
\draw (6,3.25) node{$Y$};
\draw (6,2.2) node{{\color{blue}$\beta^*$}};
\draw (6,1.25) node{{\color{red}$\beta$}};
\draw (5,2.1) node{{\color{blue}$\gamma_i$}};

\draw[color=red,line width=1.5pt] (1.5,0.625) parabola bend (3,-0.5) (5,1.5) -- (7,1.5) parabola bend (9,-0.5) (10.5,0.625);

\draw[color=blue,line width=1.5pt] (2,0.4) parabola bend (3,-0.5) (4,0.4);
\draw[color=blue,line width=1.5pt] (8,0.25) parabola bend (9,-0.5) (10,0.25);

\draw[color=blue,line width=1.5pt] (4,0.4) -- (4.5,0.2) .. controls (4.45,1.6) and (4.75,1.75) .. (4.75,1.75) .. controls (5,2) and (7,2) .. (7.25,1.75) .. controls (7.25,1.75) and (7.45,1.6) .. (7.5,0.25) -- (8,0.25);

\draw (1.5,0.625) node{$\bullet$};
\draw (3,-0.5) node{$\bullet$};
\draw (9,-0.5) node{$\bullet$};
\draw (2,0) node{$\bullet$};
\draw (2,0.4) node{$\bullet$};
\draw (4,0) node{$\bullet$};
\draw (4,0.4) node{$\bullet$};
\draw (4.5,0.2) node{$\bullet$};
\draw (7.5,0.25) node{$\bullet$};
\draw (8,0) node{$\bullet$};
\draw (8,0.25) node{$\bullet$};
\draw (10,0) node{$\bullet$};
\draw (10,0.25) node{$\bullet$};
\draw (10.5,0.625) node{$\bullet$};

\draw (3,1) node{$\Bb_i$};
\draw (9,1) node{$\Bb_i'$};
\draw (3,-0.15) node{{\color{blue}$A_i$}};
\draw (9,-0.15) node{{\color{blue}$B_i$}};
\draw (4,-0.3) node{{\color{red}$A_i'$}};
\draw (8,-0.3) node{{\color{red}$B_i'$}};

\draw (3,-0.75) node{\footnotesize$p_i$};
\draw (9,-0.75) node{\footnotesize$q_i$};
\draw (1.5,-0.25) node{$\Cc_i$};
\draw (4.5,-0.25) node{$\Dd_i$};
\draw (7.5,-0.25) node{$\Cc_i'$};
\draw (10.5,-0.25) node{$\Dd_{i+1}'$};
\draw (4.75,0.15) node{\footnotesize$a_i$};
\draw (7.35,0.15) node{\footnotesize$b_i'$};

\draw[color=black,line width=1pt] (1.25,-1.5) -- (10.75,-1.5);
\draw (1,-1.5) node{$\ldots$};
\draw (11,-1.5) node{$\ldots$};

\draw[color=black,line width=1pt](0,-1.5) -- (0.75,-1.5);
\draw[color=black,line width=1pt](11.25,-1.5) -- (12,-1.5);
\draw (0.5,-1.5) node{$\bullet$};
\draw (0.5,-1.25) node{\footnotesize$0$};
\draw (11.5,-1.5) node{$\bullet$};
\draw (11.5,-1.25) node{\footnotesize$1$};

\draw (1.5,-1.5) node{$\bullet$};
\draw (2,-1.5) node{$\bullet$};
\draw (3,-1.5) node{$\bullet$};
\draw (4,-1.5) node{$\bullet$};
\draw (4.5,-1.5) node{$\bullet$};
\draw (7.5,-1.5) node{$\bullet$};
\draw (8,-1.5) node{$\bullet$};
\draw (9,-1.5) node{$\bullet$};
\draw (10,-1.5) node{$\bullet$};
\draw (10.5,-1.5) node{$\bullet$};

\draw (1.5,-1.25) node{\footnotesize$t_i-\delta\ \ \ \ $};
\draw (2,-1.25) node{\footnotesize$\ \ \ \ t_i-\frac{\delta}{2}$};
\draw (3,-1.25) node{\footnotesize$t_i$};
\draw (4,-1.25) node{\footnotesize$t_i+\frac{\delta}{2}\ \ \ \ $};
\draw (4.5,-1.25) node{\footnotesize$\ \ \ \ t_i+\delta$};
\draw (7.5,-1.25) node{\footnotesize$s_i-\delta\ \ \ \ $};
\draw (8,-1.25) node{\footnotesize$\ \ \ \ s_i-\frac{\delta}{2}$};
\draw (9,-1.25) node{\footnotesize$s_i$};
\draw (10,-1.25) node{\footnotesize$s_i+\frac{\delta}{2}$\ \ \ \ };
\draw (10.5,-1.25) node{\ \ \ \ \footnotesize$s_i+\delta$};

\draw (1.5,3) node{$\Ss_i$};
\draw (11.25,3) node{$\Ss_{i+1}$};

\end{tikzpicture}
\caption{Construction of the Nash path $\gamma_i$ and the corresponding part of $\beta^*$.\label{fig7}}
\vspace{-1.75em}
\end{figure}
\end{center} 

\begin{center}
\begin{figure}[ht]
\begin{tikzpicture}[scale=1.2]

\draw[dashed,line width=1.5pt] (0,-0.5) -- (12,-0.5);
\draw[dashed,line width=1.5pt] (12,4) -- (5,4) parabola bend (3,-0.5) (1,4) -- (0,4);

\draw[fill=gray!100,opacity=0.5,draw=none] (0,-0.5) -- (12,-0.5) -- (12,4) -- (5,4) parabola bend (3,-0.5) (1,4) -- (0,4);
\draw[fill=white!100,opacity=1,draw=none] (9,1.5) ellipse (1cm and 2cm);
\draw[dashed,line width=1.5pt] (9,1.5) ellipse (1cm and 2cm);

\draw[color=green,dashed,line width=1pt] (1,-0.5) -- (5,-0.5) arc(0:180:2);
\draw[color=green,dashed,line width=1pt] (7,-0.5) -- (11,-0.5) arc(0:180:2) --cycle;

\draw[color=green,dashed,line width=1pt] (4.2,1.1) .. controls (4.2,1.125) and (4.75,2) .. (4.75,2.5) -- (7.15,2.5) .. controls (7.15,2) and (7.8,1) .. (7.8,1.1);
\draw[color=green,dashed,line width=1pt] (7.1,0.15) -- (6.85,0.5) -- (5.15,0.5) -- (4.9,0.1);

\draw[fill=green!100,opacity=0.25,draw=none] (1,-0.5) -- (5,-0.5) arc(0:180:2) --cycle;
\draw[fill=green!100,opacity=0.25,draw=none] (7,-0.5) -- (11,-0.5) arc(0:180:2) --cycle;

\draw[fill=green!100,opacity=0.25,draw=none] (4.2,1.1) .. controls (4.2,1.125) and (4.75,2) .. (4.75,2.5) -- (7.15,2.5) .. controls (7.15,2) and (7.8,1) .. (7.8,1.1) .. controls (7.6,1) and (7.2,0.4) .. (7.1,0.15) -- (6.85,0.5) -- (5.15,0.5) -- (4.9,0.1) .. controls (4.7,0.75) and (4.2,1.025) .. (4.15,1.15);

\draw[color=black,line width=1.5pt] (0,4) parabola bend (3,-0.5) (6,4);
\draw[color=black,line width=1.5pt] (6,4) parabola bend (9,-0.5) (12,4);
\draw (6,3.25) node{$Y$};
\draw (6,2.2) node{{\color{blue}$\beta^*$}};
\draw (6,1.25) node{{\color{red}$\beta$}};
\draw (5,2.1) node{{\color{blue}$\sigma_i$}};

\draw[color=red,line width=1.5pt] (1.5,0.625) parabola bend (3,-0.5) (5,1.5) -- (7,1.5) parabola bend (9,-0.5) (10.5,0.625);

\draw[color=blue,line width=1.5pt] (2,0.25) parabola bend (3,-0.5) (4,0.25);
\draw[color=blue,line width=1.5pt] (8,0.4) parabola bend (9,-0.5) (10,0.4);

\draw[color=blue,line width=1.5pt] (4,0.25) -- (4.5,0.2) .. controls (4.45,1.6) and (4.75,1.75) .. (4.75,1.75) .. controls (5,2) and (7,2) .. (7.25,1.75) .. controls (7.25,1.75) and (7.45,1.6) .. (7.5,0.25) -- (8,0.4);

\draw (1.5,0.625) node{$\bullet$};
\draw (3,-0.5) node{$\bullet$};
\draw (9,-0.5) node{$\bullet$};
\draw (2,0) node{$\bullet$};
\draw (2,0.25) node{$\bullet$};
\draw (4,0) node{$\bullet$};
\draw (4,0.25) node{$\bullet$};
\draw (4.5,0.2) node{$\bullet$};
\draw (7.5,0.25) node{$\bullet$};
\draw (8,0.4) node{$\bullet$};
\draw (10,0) node{$\bullet$};
\draw (10,0.4) node{$\bullet$};
\draw (10.5,0.625) node{$\bullet$};

\draw (3,1) node{$\Bb_i'$};
\draw (9,1) node{$\Bb_{i+1}$};
\draw (3,-0.15) node{{\color{blue}$B_i$}};
\draw (9,-0.15) node{{\color{blue}$A_{i+1}$}};
\draw (4,-0.3) node{{\color{red}$B_i'$}};
\draw (8,-0.3) node{{\color{red}$A_{i+1}'$}};

\draw (3,-0.75) node{\footnotesize$q_i$};
\draw (9,-0.75) node{\footnotesize$p_{i+1}$};
\draw (1.5,-0.25) node{$\Cc_i'$};
\draw (4.65,-0.25) node{$\Dd_{i+1}'$};
\draw (7.35,-0.25) node{$\Cc_{i+1}$};
\draw (10.5,-0.25) node{$\Dd_{i+1}$};
\draw (4.85,0.15) node{\footnotesize$a_{i+1}'$};
\draw (7.35,0.15) node{\footnotesize$b_{i+1}$};

\draw[color=black,line width=1pt] (1.25,-1.5) -- (10.75,-1.5);
\draw (1,-1.5) node{$\ldots$};
\draw (11,-1.5) node{$\ldots$};

\draw[color=black,line width=1pt](0,-1.5) -- (0.75,-1.5);
\draw[color=black,line width=1pt](11.25,-1.5) -- (12,-1.5);
\draw (0.5,-1.5) node{$\bullet$};
\draw (0.5,-1.25) node{\footnotesize$0$};
\draw (11.5,-1.5) node{$\bullet$};
\draw (11.5,-1.25) node{\footnotesize$1$};

\draw (1.5,-1.5) node{$\bullet$};
\draw (2,-1.5) node{$\bullet$};
\draw (3,-1.5) node{$\bullet$};
\draw (4,-1.5) node{$\bullet$};
\draw (4.5,-1.5) node{$\bullet$};
\draw (7.5,-1.5) node{$\bullet$};
\draw (8,-1.5) node{$\bullet$};
\draw (9,-1.5) node{$\bullet$};
\draw (10,-1.5) node{$\bullet$};
\draw (10.5,-1.5) node{$\bullet$};

\draw (1.5,-1.25) node{\footnotesize$s_i-\delta\ \ \ \ $};
\draw (2,-1.25) node{\footnotesize$\ \ \ \ s_i-\frac{\delta}{2}$};
\draw (3,-1.25) node{\footnotesize$s_i$};
\draw (4,-1.25) node{\footnotesize$s_i+\frac{\delta}{2}\ \ \ \ $};
\draw (4.5,-1.25) node{\footnotesize$\ \ \ \ s_i+\delta$};
\draw (7.5,-1.25) node{\scriptsize$t_{i+1}-\delta\ \ \ \ \ \ \ \ $};
\draw (8,-1.25) node{\scriptsize$\ \ \ \ \ t_{i+1}-\frac{\delta}{2}$};
\draw (9,-1.25) node{\scriptsize$t_{i+1}$};
\draw (10,-1.25) node{\scriptsize$t_{i+1}+\frac{\delta}{2}$\ \ \ \ \ };
\draw (10.5,-1.25) node{\ \ \ \ \ \ \ \scriptsize$t_{i+1}+\delta$};

\draw (0.75,3) node{$\Ss_i$};
\draw (7.5,3) node{$\Ss_{i+1}$};

\end{tikzpicture}
\caption{Construction of the Nash path $\sigma_i$ and the corresponding part of $\beta^*$.\label{fig8}}
\vspace{-1.75em}
\end{figure}
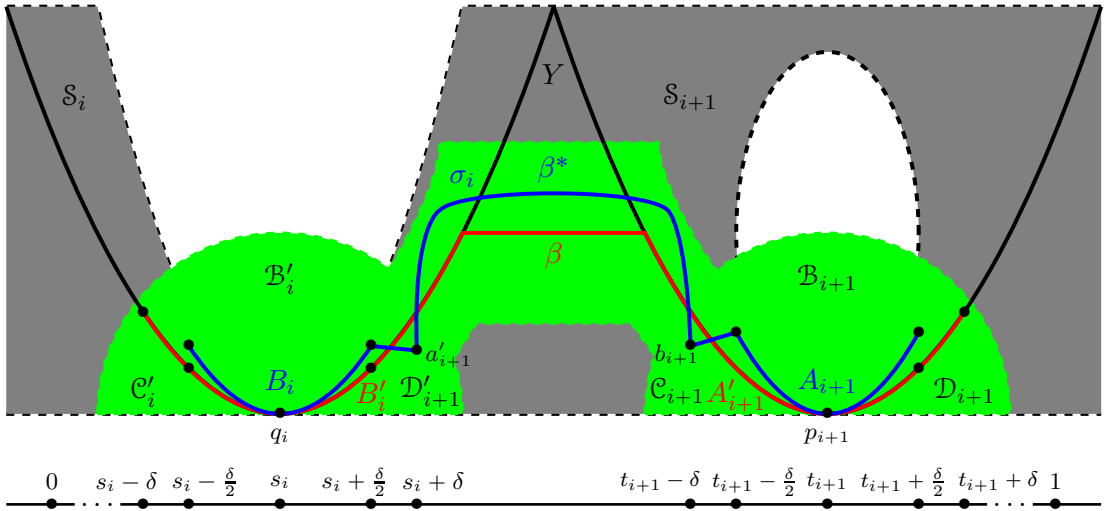
\end{center} 

\begin{proof}
We fix $\veps>0$ and conduct the proof of this result in several steps: 

\noindent{\sc Step 1. (Local) modification of $\beta$ around the points $p_i$.}
Fix an index $i=1,\ldots,r$. {\em We modify $\beta$ in a neighborhood of $t_i$ so that the new $\beta$ is a Nash map around $t_i$ and $\beta([t_i-\delta,t_i+\delta]\cap Y\subset\{p_i\}$ if $\delta>0$ is small enough}.

Consider the open ball $\Bb_i$ of center $p_i$ and radius $\frac{\veps}{3}$. Let $\delta_0>0$ be such that $\beta|_{[t_i-\delta_0,t_i+\delta_0]}$ is a Nash path whose image is contained in $(\Ss_i\cap\Bb_i)\cup\{p_i\}$. Let $\Cc_i$ and $\Dd_i$ be the connected components of $\Ss_i\cap\Bb_i$ (maybe the same) such that $A_i':=\beta([t_i-\delta,t_i+\delta])\subset\Cc_i\cup\Dd_i\cup\{p_i\}$ (for some $0<\delta<\delta_0$ small enough). By \cite[Main Thm.1.4]{f1} the semialgebraic set $\Cc_i\cup\Dd_i\cup\{p_i\}$ is a Nash image of $\R^d$ (where $d:=\dim(\Ss)$) and it is connected by analytic paths. By either \cite[Prop.7.8 \& Cor.7.9]{f1} or Lemma \ref{doublecurve} (the first reference if $\Cc_i\neq\Dd_i$ and the second reference if $\Cc_i=\Dd_i$) we may find a Nash bridge (or Nash arc) $A_i\subset\Cc_i\cup\Dd_i\cup\{p_i\}$ such that $A_i\cap Y\subset\{p_i\}$. As $\Ss_i\cap\Bb_i$ is a Nash manifold, both $\Cc_i$ and $\Dd_i$ are connected Nash manifolds.   
 
\noindent{\sc Step 2. (Local) modification of $\beta$ around the points $q_i$.}
Fix an index $i=1,\ldots,r-1$. {\em We modify $\beta$ in a neighborhood of $s_i$ so that the new $\beta$ is a Nash map around $s_i$ and $\beta([s_i-\delta,s_i+\delta])\cap Y\subset\{q_i\}$ if $\delta>0$ is small enough}.

Let $\Bb_i'$ be the ball of center $q_i$ and radius $\frac{\veps}{3}$. Let $\delta_0>0$ be such that $\beta|_{[s_i-\delta_0,s_i+\delta_0]}$ is a Nash path whose image is contained in $((\Ss_i\cap\Ss_{i+1})\cap\Bb_i')\cup\{q_i\}$. Let $\Cc_i'$ and $\Dd_{i+1}'$ be the respective connected components of $\Ss_i\cap\Bb_i'$ and $\Ss_{i+1}\cap\Bb_i'$ (maybe the same if $\Ss_i=\Ss_{i+1}$) such that $B_i':=\beta([s_i-\delta,s_i+\delta])\subset\Cc_i'\cup\Dd_{i+1}'\cup\{q_i\}$ (for some $0<\delta<\delta_0$ small enough). By \cite[Main Thm.1.4]{f1} the semialgebraic set $\Cc_i'\cup\Dd_{i+1}'\cup\{q_i\}$ is a Nash image of $\R^d$ (where $d:=\dim(\Ss)$) and it is connected by analytic paths. By \cite[Prop.7.8 \& Cor.7.9]{f1} or Lemma \ref{doublecurve} (the first reference if $\Cc_i'\neq\Dd_{i+1}'$ and the second reference if $\Cc_i'=\Dd_{i+1}'$) we may find a Nash bridge (or a Nash arc) $B_i\subset\Cc_i'\cup\Dd_{i+1}'\cup\{q_i\}$ such that $B_i\cap Y\subset\{q_i\}$. As $\Ss_i\cap\Bb_i'$ is a Nash manifold, both $\Cc_i'$ and $\Dd_{i+1}'$ are connected Nash manifolds. 
 
\noindent{\sc Step 3. Modification of $\beta$ outside a neighborhood of $\{p_1,\ldots,p_r,q_1,\ldots,q_{r-1}\}$.}
Taking a smaller $\delta>0$ if necessary, we may assume $b_{i0}:=\beta(t_i-\delta)\in\Cc_i$, $a_{i0}:=\beta(t_i+\delta)\in\Dd_i$ for $i=1,\ldots,r$ and $b_{i0}':=\beta(s_i-\delta)\in\Cc_i'$, $a_{i+1,0}':=\beta(s_i+\delta)\in\Dd_{i+1}'$ for $i=1,\ldots,r-1$. If $\beta([t_i+\delta,s_i-\delta])\cap Y$ is a finite set, we do nothing with this semialgebraic set. If $\beta([s_i+\delta,t_{i+1}-\delta])\cap Y$ is a finite set, we also do nothing. Let us modify $\beta([t_i+\delta,s_i-\delta])$ if the intersection $\beta([t_i+\delta,s_i-\delta])\cap Y$ has dimension $1$ (Figure \ref{fig7}). 

Pick points $a_{i1}\in\Dd_i\setminus Y$ and $b_{i1}'\in\Cc_i'\setminus Y$ and let 
$$
\beta_i:[t_i+\delta/2,s_i-\delta/2]\to\Dd_i\cup\beta([t_i+\delta,s_i-\delta])\cup\Cc_i'\subset\Ss_i
$$ 
be a continuous semialgebraic path such that $\beta_i|_{[t_i+\delta,s_i-\delta]}=\beta|_{[t_i+\delta,s_i-\delta]}$, $\beta_i(t_i+\delta/2)=a_{i1}$, $\beta_i([t_i+\delta/2,t_i+\delta])\subset\Dd_i$, $\beta_i(s_i-\delta/2)=b_{i1}'$ and $\beta_i([s_i-\delta,s_i-\delta/2])\subset\Cc_i'$. 

Define 
$$
\veps':=\min\{\veps,\dist(a_{i0},\Ss_i\setminus\Dd_i),\dist(a_{i1},\Ss_i\setminus(\Dd_i\setminus Y)),\dist(b_{i0}',\Ss_i\setminus\Cc_i'),\dist(b_{i1}',\Ss_i\setminus(\Cc_i'\setminus Y))\}>0. 
$$
By \cite[Cor.8.9.6]{bcr} there exists a Nash path $\gamma_i:[t_i+\delta/2,s_i-\delta/2]\to\Ss_i$ such that $\|\beta_i-\gamma_i\|<\frac{\veps'}{3}$. We have $\gamma_i(t_i+\delta/2)\in\Dd_i\setminus Y$, $a_i:=\gamma_i(t_i+\delta)\in\Dd_i$, $b_i':=\gamma_i(s_i-\delta)\in\Cc_i'$ and $\gamma_i(s_i-\delta/2)\in\Cc_i'\setminus Y$. By \cite[Lem.7.7]{f1} we deduce $\gamma_i^{-1}(Y)$ is a finite set. As $\gamma_i$ is Nash, $\eta(\gamma_i)=\varnothing$. 

Analogously, if $\beta([s_i+\delta,t_{i+1}-\delta])\cap Y$ has dimension $1$, one constructs (as before) a Nash path $\sigma_i:[s_i+\delta/2,t_{i+1}-\delta/2]\to\Ss_{i+1}$ such that $\|\beta|_{[s_i+\delta/2,t_{i+1}-\delta/2]}-\sigma_i\|<\frac{\veps}{3}$, $\sigma_i(s_i+\delta/2)\in\Dd_{i+1}'\setminus Y$, $a_{i+1}':=\sigma_i(s_i+\delta)\in\Dd_{i+1}'$, $b_{i+1}:=\sigma_i(t_{i+1}-\delta)\in\Cc_{i+1}$, $\sigma_i(t_{i+1}-\delta/2)\in\Cc_{i+1}\setminus Y$ and $\sigma_i^{-1}(Y)$ is a finite set (Figure \ref{fig8}). Again, as $\sigma_i$ is Nash, $\eta(\sigma_i)=\varnothing$. 
 
\noindent{\sc Step 4. Full modification of $\beta$.}
Recall that if $x,y\in\Bb_i$ (or $x,y\in\Bb_i'$), then $\|x-y\|<\frac{2\veps}{3}$. In addition, $\Cc_i\subset\Ss_i\cap\Bb_i$, $\Dd_i\subset\Ss_i\cap\Bb_i$, $\Cc_i'\subset\Ss_i\cap\Bb_i'$ and $\Dd_{i+1}'\subset\Ss_{i+1}\cap\Bb_i'$ are connected Nash manifolds. By \cite[Thm.1.5]{f1} each connected Nash manifold is connected by Nash paths. Thus, we can construct a continuous semialgebraic path $\beta^*:[0,1]\to\Ss$ that connects, using additional Nash paths that avoid $Y$ except for perhaps finitely many points, the already constructed Nash arcs (in {\sc Step} 1), Nash bridges (in {\sc Step} 2) and Nash paths (in {\sc Step} 3) and satisfies the following conditions:
\begin{itemize}
\item $\beta^*|_{[0,t_1-\delta]}=\beta|_{[0,t_1-\delta]}$ and $\beta^*|_{[t_r+\delta,1]}=\beta|_{[t_r+\delta,1]}$.
\item $\beta^*|_{[t_i-\frac{\delta}{2},t_i+\frac{\delta}{2}]}:[t_i-\frac{\delta}{2},t_i+\frac{\delta}{2}]\to A_i\subset\Cc_i\cup\Dd_i\cup\{p_i\}\subset\Ss_i\cap\Bb_i$ is a Nash parameterization of $A_i$ around $p_i=\beta^*(t_i)$.
\item $\beta^*|_{[s_i-\frac{\delta}{2},s_i+\frac{\delta}{2}]}:[s_i-\frac{\delta}{2},s_i+\frac{\delta}{2}]\to B_i\subset\Cc_i'\cup\Dd_{i+1}'\cup\{q_i\}\subset(\Ss_i\cup\Ss_{i+1})\cap\Bb_i'$ is a Nash parameterization of $B_i$ around $q_i=\beta^*(s_i)$.
\item $\beta^*|_{[t_i+\delta,s_i-\delta]}=\gamma_i|_{[t_i+\delta,s_i-\delta]}$ and $\beta^*|_{[s_i+\delta,t_{i+1}-\delta]}=\sigma_i|_{[s_i+\delta,t_{i+1}-\delta]}$,
\item $\beta^*([t_i+\frac{\delta}{2},t_i+\delta])\subset\Dd_i\subset\Ss_i\cap\Bb_i$ and $\beta^*([s_i-\delta,s_i-\frac{\delta}{2}])\subset\Cc_i'\subset\Ss_i\cap\Bb_i'$,
\item $\beta^*([s_i+\frac{\delta}{2},s_i+\delta])\subset\Dd_{i+1}'\subset\Ss_{i+1}\cap\Bb_i'$ and $\beta^*([t_{i+1}-\delta,t_{i+1}-\frac{\delta}{2}])\subset\Cc_{i+1}\subset\Ss_{i+1}\cap\Bb_{i+1}$,
\item $\eta(\beta^*)\subset\bigcup_{i=1}^r\{t_i-\delta,t_i-\frac{\delta}{2},t_i+\frac{\delta}{2},t_i+\delta\}\cup\bigcup_{i=1}^{r-1}\{s_i-\delta,s_i-\frac{\delta}{2},s_i+\frac{\delta}{2},s_i+\delta\}$ and $\beta^*(\eta(\beta^*))\subset\bigcup_{i=1}^r\Ss_i$,
\item $(\beta^*)^{-1}(Y)$ is a finite set, $\eta(\beta^*)\cap(\beta^*)^{-1}(Y)=\varnothing$ and $\eta(\beta^*)\cap\{t_1,\ldots,t_r,s_1,\ldots,s_{r-1}\}=\varnothing$.
\end{itemize}

Following the construction of $\beta^*$ we have done, one deduces that $\|\beta^*-\beta\|<\veps$. Thus, $\beta^*:[0,1]\to\R^n$ is a semialgebraic path close to $\beta$ that satisfies the required conditions (i), (ii) and (iii) in the statement. In addition, $\beta^*([0,1])\cap Y$ is a finite set, $\eta(\beta^*)\cap(\beta^*)^{-1} (Y)=\varnothing$, $\eta(\beta^*)\cap\{t_1,\ldots,t_r,s_1,\ldots,s_{r-1}\}=\varnothing$ and $\beta^*(\eta(\beta^*))\subset\bigcup_{i=1}^r\Ss_i$, as required.
\end{proof}

\bibliographystyle{amsalpha}

\begin{thebibliography}{FGh2}

\bibitem[B]{b} S.N. Bernstein: D\'emonstration du th\'eor\`eme de Weierstrass fond\'ee sur le calcul de probabilit\'es, {\em Comm. Kharkov Math. Soc.} {\bf13} (1912), 1--2.

\bibitem[Bo]{bo} I. Bogdanov: Approximation by polynomials, Mathoverflow, March 13, (2012).\\ 
{\tt https://mathoverflow.net/questions/91116/approximation-by-polynomials/91130\#91130}

\bibitem[BCR]{bcr} J. Bochnak, M. Coste, M.-F. Roy: Real algebraic geometry. {\em Ergeb. Math. } {\bf 36}, Springer-Verlag, Berlin (1998).

\bibitem[BC]{bc} F. Bruhat, H. Cartan: Sur la structure des sous-ensembles analytiques r\'eels. {\em C. R. Acad. Sci. Paris} {\bf244} (1957), 988--990.

\bibitem[CF1]{cf1} A. Carbone, J.F. Fernando: Surjective Nash maps between semialgebraic sets. {\em Adv. Math.} {\bf 438} (2024), 109288 (57 pages).

\bibitem[CF2]{cf2} A. Carbone, J.F. Fernando: Strong desingularization of semialgebraic sets and applications. \em Preprint \em (2023) {\tt arXiv:2306.08093v3}.

\bibitem[C]{c} H. Clemens: Curves in generic hypersurfaces, \em Ann. Sci. Ecole Norm. Sup. \em {\bf19} (1986), 629--636.

\bibitem[D]{d} Ph.-J. Davis: Interpolation and approximation. {\em Dover Publications}, Inc., New York, (1975).

\bibitem[DL]{dl} R.A. DeVore, G.G. Lorentz: Constructive approximation. {\em Grundlehren der Mathematischen Wissenschaften} [Fundamental Principles of Mathematical Sciences], {\bf303}. Springer-Verlag, Berlin, (1993).

\bibitem[Dr]{dries} L. van den Dries: Tame topology and o-minimal structures. {\em London Mathematical Society Lecture Note Series}, {\bf 248}. Cambridge University Press, Cambridge (1998).

\bibitem[Fe]{f1} J.F. Fernando: On Nash images of Euclidean spaces. {\em Adv. Math.} {\bf 331} (2018), 627--719.

\bibitem[FG]{fg1} J.F. Fernando, J.M. Gamboa: On the irreducible components of a semialgebraic set. \em Internat. J. Math. \em {\bf23} (2012), no. 4, 1250031, 40 pp.

\bibitem[FGU]{fgu} J.F. Fernando, J.M. Gamboa, C. Ueno: Polynomial, regular and Nash images of Euclidean spaces. Ordered algebraic structures and related topics, 145--167, {\em Contemp. Math.}, {\bf697}, {\em Amer. Math. Soc.}, Providence, RI, (2017).

\bibitem[FGh1]{fgh1} J.F. Fernando, R. Ghiloni: Differentiable approximation of continuous semialgebraic maps. {\em Selecta Math.} (N.S.) {\bf25} (2019), no. 3, Paper No. 46, 30 pp.

\bibitem[FGh2]{fgh2} J.F. Fernando, R. Ghiloni: Smooth approximations in PL geometry. {\em Amer. J. Math.} {\bf144} (2022), no. 4, 967--1007.

\bibitem[FU]{fu} J.F. Fernando, C. Ueno: On polynomial images of a closed ball. {\em J. Math. Soc. Japan} {\bf 75} (2023), no. 2, 679--733.

\bibitem[Fl]{f} M.S. Floater: On the convergence of derivatives of Bernstein approximation. {\em J. Approximation Theory} {\bf134} (2005), no.1, 130--135.

\bibitem[Hi]{hi} H. Hironaka: Resolution of singularities of an algebraic variety over a field of characteristic zero. I, II. \em Ann. of Math. \em (2) {\bf79} (1964), 109--203; ibid. (2) {\bf79} (1964), 205--326.

\bibitem[H]{hir3} M.W. Hirsch: Differential topology. Corrected reprint of the 1976 original. {\em Graduate Texts in Mathematics}, {\bf33}. Springer-Verlag, New York (1994).

\bibitem[L1]{l} G.G. Lorentz: Bernstein's polynomials. {\em Chelsea Publishing Co.}, New York (1986).

\bibitem[L2]{l2} G.G. Lorentz: Zur theorie der polynome von S. Bernstein, {\em Mate. Sbornik} {\bf2} (1937), 543--556.

\bibitem[M]{m} J. Milnor: Singular points of complex hypersurfaces. Annals of Mathematics Studies, {\bf61} Princeton University Press, Princeton, N.J.; University of Tokyo Press, Tokyo (1968).

\bibitem[Sh]{sh} M. Shiota: Nash manifolds. \em Lecture Notes in Mathematics\em, {\bf 1269}. Springer-Verlag, Berlin (1987).

\bibitem[V]{v} C. Voisin: On a conjecture of Clemens on rational curves on hypersurfaces. \em J. Differential Geom. \em {\bf44} (1996), 200--214.

\bibitem[Vo]{vo} E. Voronovskaya: D\'etermination de la forme asymptotique d'approximation des fonctions par les polyn\^omes de M. Bernstein, {\em Dokl. Akad. Nauk SSSR} (1932), 79--85.

\bibitem[W]{w} A.H. Wallace: Algebraic approximation of curves. {\em Canadian J. Math.} {\bf10} (1958), 242--278. 
\end{thebibliography}

\end{document}